\newcommand\myshade{85}
\colorlet{mylinkcolor}{violet}
\colorlet{mycitecolor}{YellowOrange}
\colorlet{myurlcolor}{RoyalBlue}
\newcommand\xqed[1]{%
  \leavevmode\unskip\penalty9999 \hbox{}\nobreak\hfill
  \quad\hbox{#1}}
\newcommand\demo{\xqed{$\triangle$}}
\theoremstyle{plain}
\newtheorem{Theorem}{Theorem}[section]
\theoremstyle{definition}
\newtheorem{Definition}{Definition}[Theorem]
\theoremstyle{remark}
\newtheorem{Remark}[Theorem]{Remark}
\theoremstyle{remark}
\theoremstyle{plain}
\theoremstyle{plain}
\theoremstyle{plain}
\newtheorem{Proposition}[Theorem]{Proposition}
\theoremstyle{remark}
\newtheorem{Example}{Example}[section]
\theoremstyle{remark}
\theoremstyle{remark}
\theoremstyle{remark}
\newcommand\RR{\mathbb{R}}
\newcommand{\Par}[1]{\partial_x  #1}
\newcommand{\XX}{\mathcal{X}}
\newcommand{\dd}{\mathbf{d}}
\newcommand{\ii}{\mathbf{i}}
\DeclareMathAlphabet{\pazocal}{OMS}{zplm}{m}{n}
\newcommand{\La}{\mbox{\pounds}}
\newcommand{\PS}[2]{\left \{ #1,#2\right \}}
\newcommand{\TT}{T}
\newcommand*{\Scale}[2][4]{\scalebox{#1}{\ensuremath{#2}}}%
\newcommand{\fc}[1]{\textcolor{black}{#1}}
\begin{document}

\title{Stochastic port--Hamiltonian systems}
\author{Francesco Cordoni$^{a}$ \and Luca Di Persio$^{b}$ \and Riccardo Muradore$^{b}$}
\date{}
\maketitle

\renewcommand{\thefootnote}{\fnsymbol{footnote}}
\footnotetext{{\scriptsize $^{a}$ Department of Civil, Environmental and Mechanical engineering, via Mesiano 77, 38123, Trento, Italy}}
\footnotetext{{\scriptsize $^{b}$ Department of Computer Science, University of Verona, Strada le Grazie, 15, Verona, 37134, Italy}}
\footnotetext{{\scriptsize E-mail addresses: francesco.cordoni@unitn.it
(Francesco Cordoni), luca.dipersio@univr.it (Luca Di Persio), (Riccardo Muradore) riccardo.muradore@univr.it}}

\begin{abstract}
In the present work we formally extend the theory of port--Hamiltonian systems to include random perturbations. In particular, suitably choosing the space of flow and effort variables we will show how several elements coming from possibly different physical domains can be interconnected in order to describe a \fc{dynamic system} perturbed by general \fc{continuous} semimartingale. \fc{Relevant enough}, the noise does not enter into the system solely as an external random perturbation, since each port is itself \fc{intrinsically stochastic}. \fc{Coherently to the classical deterministic setting}, we will show how such an approach extends \fc{existing literature of stochastic Hamiltonian systems on} pseudo-Poisson and pre--symplectic \fc{manifolds}. Moreover, we will prove that a power-preserving interconnection of stochastic port--Hamiltonian systems is a stochastic port--Hamiltonian system as well.
\end{abstract}

\textbf{AMS Classification subjects:} 34G20, 34F05, 37N35 \medskip

\textbf{Keywords or phrases: } Stochastic geometric mechanics, port--Hamiltonian systems, stochastic equations on manifold, Dirac manifold.

\maketitle
\tableofcontents

\color{black}     
\section{Introduction}

The mathematical formulation of port--Hamiltonian systems (PHS) and \textit{Dirac manifolds} is long-standing. Starting from its first formulations, \cite{Cou,Dal,Dal2}, it has been generalized along years to cover a heterogeneous set of applications, spanning from passivity-based control of mechanical systems, \cite{OVSME}, to process control,  \cite{RMS}, from mechatronics, \cite{MZ}, to computer science applied to motors related problems, \cite{YYLW}

From a mathematical point of view, the port--Hamiltonian framework is a combination of coordinate--free geometric Hamiltonian dynamics together with a port--modelling perspective. In particular, the equations of motion describing the dynamics of a physical system are given together with the interconnection structure of the network model which provides a geometric structure, known as the associated \textit{Dirac structure}, representing the energetic topology of the system. In particular, a \textit{Dirac structure} can be seen as a generalization of (pseudo) Poisson and pre-symplectic structures. This implies that PHS are primarily geometric objects, whose main and most general representation is implicit and based on a coordinate--free geometric formulation, \cite{Sec,VdSBook,VdSPH}. 

The classical approach to geometric mechanics is given via Poisson and symplectic structures, \cite{Holm1,Holm2,Holm3}. Dirac structures overcome both formulations, \fc{allowing} to describe the underlying structure of the system via a mixed set of differential and algebraic constraints. Therefore, it is  possible to formulate the general notion of \textit{implicit port--Hamiltonian system}, the core of it being represented by the geometric notion of Dirac structure, describing the power interconnection of the system. This is the fundamental reason why the  Dirac structure constitutes the key ingredient for the port--Hamiltonian formalism: it reflects both physical properties and invariants of the system. Moreover, they can also be used to study relevant problems related to {\it non-equilibrium thermodynamics}, \cite{YGB1,YGB2}.

The main goal of the present research is to generalize port--Hamiltonian systems by formally introducing stochastic port--Hamiltonian systems (SPHS). The resulting class of stochastic systems will be shown to be general enough to include stochastic dynamics of (controlled) physical and mechanical systems in a random environment as well as systems characterized by parameters uncertainty that has to be modelled as random variables.

The need for the proposed SPHS generalization is twofold. On one side, even if a deterministic time evolution of a system is assumed, it is often unrealistic to accurate estimate the driving parameters characterizing it, so that we are forced to take measurement errors into account, \cite{Ort,Tsi}. \fc{On the other side}, a system typically interacts with an environment whose behavior and characteristics are not completely known. 
This results in a fundamental ignorance about the real influence of the environment on the system whose dynamics we want to describe. A possible solution to such an issue can be to analyse the external environment \fc{as described by a random vector field}, \cite{Holm4,HolmVP,Ort}. We would also like to mention that high sensitivity of some physical systems to certain \fc{parameters} is often efficiently tackled via probabilistic methods, \cite{Bes,Eyi,Fla}. 

The above reasons demand for a setting which allows to include stochastic Hamiltonian systems and stochastic dynamics on Poisson and symplectic manifolds. Poisson Hamiltonian dynamics has been first \fc{introduced in} the stochastic case in \cite{Bis}, and it has been generalized over the years, see \cite{Holm3,Ort} and the references therein. In particular, such a treatment starts from classical deterministic Hamilton equations of motion that, on a Poisson manifold, read as
\[
\dot{x} = \{x,H\}=: \mathrm{X}_H(x)\,,
\] 
being $\{ \cdot,\cdot\}$ the Poisson bracket, $H$ the \textit{Hamiltonian} of the system, representing the total energy, while $\mathrm{X}_H$ is called \textit{Hamiltonian flow}. Thus, a random perturbation is added to the system considering a \fc{stochastic} Hamiltonian of the form $\hat{H} := H + h \dot{W}$, where $h$ is a suitable function, \fc{typically referred to as stochastic potential}, and $\dot{W}$ is the formal time--derivative of a Brownian motion. \fc{In its most general formulation, as recently introduced in \cite{Ort}}, one can assume that the system is perturbed by a continuous semimartingale, so that the \textit{Hamilton equations of motion} become
\[
\delta X_t= \mathrm{X}_{\hat{H}}(X_t) \delta Z_t\,,
\]
where the notation $\delta X$ indicates that the (stochastic) integration is taken in the Stratonovich sense, see below for further details, while $Z$ is a general semimartingale. 

\fc{Stochastic port--Hamiltonian systems (SPHS)} have been previously studied only in \cite{Had,Sat1,Sat2,Sat3,Sat4}, \fc{and more recently in \cite{CDPM_Bil,CDPMTank,CDPM_IFAC,CDPM_Dis,CDPM_Weak}}. Nonetheless, all of these results start considering an \fc{input--state--output} formulation of the deterministic PHS, then extending the theory from the deterministic to the stochastic setting just adding a random perturbation represented by a standard Brownian motion. In particular, \fc{none of the mentioned} papers address the \fc{founding core of the PHS theory}, namely the Dirac structure. \fc{Therefore, to the best of our knowledge, no implicit formulation for SPHSs has been previously given in literature}.

In what follows, we largely exploit the theory of stochastic differential equations on manifolds, \cite{EmeBook,Hsu}, \fc{and in particular the tools from global stochastic analysis as introduced in \cite{Sch,Mey}}, in connection with the analysis of stochastic Hamiltonian dynamics, \cite{Ort}. In order to generalize the notion of \textit{Dirac structure} and \textit{port-Hamiltonian system}, we will follow an approach similar to the one used in \cite{VdSC2}, to generalize classical deterministic PHS to distributed parameters, \fc{so that} flow and effort variables \fc{are defined by means} of \textit{Stratonovich stochastic vector fields}.

This allows us to generalize existing results on SPHS in several directions. First of all, our stochastic formulation will start at the very core of PHS, i.e., by modelling the Dirac structure and then by defining SPHS as a purely implicit and coordinate--free geometric object. \fc{Therefore, we will be able} to recover the existing notion of SPHS as a {\it particular case}.
Then we shall provide a description allowing the noise to affect the system in different ways. On one side, \fc{each port is by itself intrinsically stochastic and, on the other side,} a stochastic port is added to the whole system, hence describing the noise as an external random vector field affecting the system. The latter description is equivalent to consider the system embedded in an external stochastic environment in which the system itself evolves. It is worth mentioning that such point of view constitutes the typical way in which the noise is considered to enter into systems, see, in particular, the input--state--output SPHS defined in \cite{Sat1,Sat2,Sat3,Sat4,CDPM_Bil,CDPMTank,CDPM_IFAC,CDPM_Dis,CDPM_Weak}, where the the noise is modelled as an external random perturbation. 
Let us \fc{further} note that our formulation also allows for a more general source of randomness. In fact, each element of the system can be considered to be a semimartingale. This means that the noise is not only a possible result of the interaction between the system and an external random \fc{environment:} each port may provide its own random contribution to the whole system. \fc{In this sense, the power exchanged by any port of the SPHS can be a semimartingale itself}. As a byproduct of such an approach, we are also able to treat the noise as an error about parameters.

In order to generalize the well-established theory of deterministic PHS to the stochastic case, we will consider flow variables to be stochastic random fields \fc{perturbed} by a general semimartingale. In what follows we will also use the notation $\mathfrak{X}_{Z^\alpha}(\XX)$, to indicate the space of \textit{(Stratonovich) vector fields} perturbed by the semimartingale $Z^\alpha$ on the manifold $\XX$, so that the flow variable $\delta f^\alpha_t \in \mathfrak{X}_{Z^\alpha}(\XX)$ takes the particular form
\begin{equation}\label{EQN:StraIntro}
\delta f^\alpha_t = e^\alpha(f^\alpha_t,Z^\alpha_t) \delta Z^\alpha_t\,.
\end{equation}
Therefore, our setting generalizes classic deterministic treatment, allowing each port to be a general semimartingale. We remark that, as it is standard in stochastic analysis, equation \eqref{EQN:StraIntro} has to be intended as the short hand notation for
\[
f^\alpha_t - f^\alpha_0 = \int_0^t e^\alpha(f^\alpha_s,Z^\alpha_s) \delta Z^\alpha_s\,,
\]
\fc{being $e^\alpha$ a suitable regular enough function referred to as \textit{Stratonovich operator}, \cite{EmeBook}}. In what follows, even if not specified, we will always consider continuous semimartingale. \fc{Following \cite{EmeBook} it can be seen that the stochastic integral 
\[
P_t := \int_0^t \langle e_s , \delta f_s \rangle\,,
\]
is well-defined and called \textit{Stratonovich integral} of $e$ along the semimartingale $f$. The stochastic integral $P_t$ is a real-valued semimartingale and, as standard in the PHS formalism, it represents the total power exchange through the port. We stress again that, one of the major contribution of the present work is the fact that in complete generality we allow the power exchanged by any port to be a semimartingale. It is worth remarking that, differently from the notation used in the deterministic context, we denote the flow variable by $\delta f_t$ whereas $f_t$ denotes the semimartingale that generates the flow $\delta f_t$. This choice has been done to stress that the flow variable in the proposed setting can be a stochastic vector field integrated in the Stratonovich sense.}

As stated above, the Stratonovich approach to stochastic calculus will be used. In general, when stochastic dynamics is described over general geometric structures, such as manifolds, many problems may arise.
Between them, the choice of the most {\it convenient} or {\it natural} notion of integration to be used. We stress that within stochastic analysis framework, several notions of stochastic integration can be given. This means that, case by case, one usually chooses the most suitable one with respect to the specific mathematical scenario of interest.
As a broad classification, and just to limit ourselves to consider the two most used stochastic theories of integration, it can be said that while \textit{Stratonovich integration} enjoys good geometric properties, the \textit{It\^{o} integral} definition has good probabilistic properties, such as the martingale \fc{property of} the Brownian motion. The geometric nature of Dirac structure suggests \fc{the choice of} Stratonovich \fc{calculus}. The general treatment will be \fc{thus} carried out in such a setting. \fc{To make the treatment as general as possible, we will show how to} {\it translate} Stratonovich stochastic integrals into the corresponding It\^{o} formulation; \fc{we remark that the It\^{o} formulation is extremely useful to obtain certain estimates, for instance to compute conserved physical quantities exploiting general probabilistic properties of the It\^{o} integral}. For such a reason, we will show how SPHS in Stratonovich sense can be converted into the corresponding It\^{o} formulation. We refer  the interested reader to \cite{Oks} for a complete analysis of links and differences between the two different approaches to stochastic integration. Last but not least, let us also underline that some very recent works have appeared attempting to directly use the \textit{It\^{o} integral} formulation from a geometric perspective, see, e.g., \cite{Arm} and the references therein.  

\color{black}
The present work is structured as follows: in Section \ref{SEC:ItoStr} we will introduced main facts and results on stochastic integration on manifolds used throughout the paper; in Section \ref{SEC:PHS} we will recall the main results regarding the theory of deterministic explicit input--state--output port-Hamiltonian systems, starting from the deterministic PHS and then introducing explicit stochastic PHS in Section \ref{SEC:ESPHS}; Section \ref{SEC:IPHS}  will be devoted to generalize previous results to formally define implicit port-Hamiltonian systems seen as power preserving interconnections of certain port elements. Section \ref{SEC:ISPHS} presents the formal definition of stochastic implicit port-Hamiltonian systems and some results are introduced. Subsection \ref{SEC:IntSPHS} studies the interconnected stochastic port-Hamiltonian systems, while Subsection \ref{SSEC:Ito} shows how SPHS, previously considered from the {\it Stratonovich} point of view, can be equivalently defined in terms of It\^{o} integral. Conclusions are drawn in Section \ref{SEC:Conc}.

\color{black}

\section{It\^{o} and Stratonovich calculus on manifolds}\label{SEC:ItoStr}

Before entering into details on the port--Hamiltonian formalism, to make the present work as much self-contained as possible, we will briefly recall the main definition and results on It\^{o} and Stratonovich calculus on manifolds. It is worth stressing that this section does not want to be exhaustive on the topic: we refer the reader to \cite{EmeBook,ElwBook,Hsu,Ort} for a detailed introduction to manifold-valued semimartingales and semimartingale driven Hamiltonian systems. In order to introduce semimartingale-driven SPHS, we will make extensive use of the global stochastic analysis as introduced in \cite{Sch,Mey} and deeply investigated in \cite{Eme}.

Given a general manifold $\XX$, we will denote by $T_x \XX$ the space of tangent vector to $\XX$ at $x \in \XX$ and by $T \XX := \bigcup_{x \in \XX}T_x \XX$ the tangent bundle. The section of the bundle $\XX \to T\XX$ is the space of \textit{(Stratonovich) vector fields} $\mathfrak{X} (\XX)$. Moreover,  $T^*_x \XX$ is the space of cotangent vectors of $\XX$ at $x$ and $T^* \XX := \bigcup_{x \in \XX}T^*_x \XX$ represents the cotangent bundle. The section of the bundle $ \XX \to T^*\XX$ is the space of one-forms $\Omega^1(\XX)$. 

Further, a field of \textit{tangent vectors} of order 2 to a manifold $\XX$ at the point $x$ is a differential operator of order at most 2 with no constant term, that is $L:C^\infty(\XX) \to \RR$ such that
\[
L[f^3](x)=3f(x)L[f^2](x) - 3 f^2(x)L[f](x)\,.
\]

The space of \textit{tangent vectors} of order 2 at $x$ is denoted by $\tau_x \XX$, and the \textit{second order} tangent bundle of $\XX$ is denoted by $\tau \XX := \bigcup_{x \in \XX} \tau_x \XX$. We will denote by $\mathfrak{X}_2(\XX)$ the space of \textit{vector fields} of order 2 which is defined as the section of the tangent bundle $\tau \XX$. Similarly, we can define \textit{forms} of order 2 $\Omega_2(\XX)$ as smooth sections of the cotangent bundle $\tau^* \XX := \bigcup_{x \in \XX} \tau_x^* \XX$. Then, for any function $f \in C^\infty (\XX)$, and $L \in \mathfrak{X}_2(\XX)$, we define the form of order 2 $\dd_2 f \in \Omega_2(\XX)$ as
\[
\dd_2 f (L) := L[f]\,.
\] 
We refer the interested reader to \cite[Chapter 6]{Eme} or also to \cite{Ort} for a detailed introduction to the topic. It can be immediately seen that standard tangent vectors are contained in the tangent vector of order 2, that is $T \XX \subset \tau \XX$, \cite{Eme,EmeBook}.

Exactly as for classical tangent vectors of order 1, forms of order 2 are dual to the space of tangent vectors of order 2. Consequently, we can define a pairing operator $\langle \theta , dX\rangle $ between a $\theta \in \Omega_2(\XX)$ and $dX \in \mathfrak{X}_2(\XX)$. Thus, \cite{Eme}, the map $\theta \mapsto \int_0^t \langle \theta , dX_s\rangle$ is well-defined, and the stochastic integral $\int_0^t \langle \theta_s , dX_s\rangle$ is called \textit{It\^{o} integral} of $\theta$ along $X$. Moreover, by \cite[Theorem 6.24]{EmeBook}, it follows that there exists a unique linear map $\theta \mapsto \int_0^t \langle \theta_s , dX_s\rangle$ associating a continuous real--valued semimartingale to $\theta$.

Thus, for $\alpha \in \Omega^1(\XX)$ and a semimartinale $X$ on the manifold $\XX$, the Stratonovich integral $\int_0^t \langle \alpha , \delta X_s\rangle$ of $\alpha$ along $X$ is defined to be the semimartingale $\int_0^t \langle \dd_2 \alpha , dX_s\rangle$. Concerning the case considered in the present work, it is relevant the $T^*\XX-$valued semimartingales case, to consider stochastic Hamiltonians. In particular, the Stratonovich integral of a $T^*\XX-$valued semimartingale $\beta$ along $X$ is the unique real-valued semimartingale such that the following equalities hold true
\begin{equation}\label{EQN:FormVSem}
\begin{split}
&\int_0^t \langle \dd f,\delta X_s\rangle = f(X_t)-f(X_0)\,,\\
&\int_0^t \langle Z \beta ,\delta X_s\rangle =\int_0^t Z(X_s) \delta \left( \int_0^s \langle \beta ,\delta X_q\rangle \right )\,,
\end{split}
\end{equation}
for any $f \in C^\infty(\XX)$ and any continuous semimartingale $Z$.

Let us introduce the notion of Stratonovich \textit{Stochastic Differential Equations} (SDE) on a manifold, \cite{EmeBook}. Let $\mathcal{M}$ and $\mathcal{N}$ be two manifolds; a \textit{Stratonovich operator} from $\mathcal{M}$ to $\mathcal{N}$ is a family $\left (e(x,z)\right )_{z\in \mathcal{M},x\in \mathcal{N}}$ such that $e(x,z): \TT_z \mathcal{M} \to \TT_x \mathcal{N}$ is a linear and smooth map. The adjoint of $e(x,y)$ is $e^*(x,z):\TT^*_x \mathcal{N} \to \TT^*_z \mathcal{M}$. It is worth noticing that the \textit{Stratonovich operator} $e$ is a map from $T\mathcal{M} \times \mathcal{N}$ to $ T \mathcal{N}$, and $e$ is a section of the fiber bundle $T^* \mathcal{M} \oplus T\mathcal{N}$ over $\mathcal{M}\times \mathcal{N}$.

Given $Z$ a $\mathcal{M}-$valued semimartingale, we will say that the $\mathcal{N}-$valued semimartingale $X$ is the solution  to the \textit{Stratonovich stochastic differential equation}
\begin{equation}\label{EQN:Strat1}
\delta X_t = e(X_t,Z_t)\delta Z_t\,,
\end{equation}
with initial condition $X_0$, if
\begin{equation}\label{EQN:ESPHSIOAdj}
\int_0^t \langle \theta,\delta X_s\rangle = \int_0^t \langle e^*(X_s,Z_s)\theta,\delta Z_s\rangle\,,
\end{equation}
holds $\forall\,\theta \in \Omega^1(\mathcal{N})$, where $\langle \cdot \, \cdot \rangle$ denotes the standard pairing between a form $\theta$ and a vector field $v$, defined as 
\begin{equation}\label{EQN:PairVFF1}
\langle \theta, v\rangle = \ii_{v}\theta\,,
\end{equation}
denoting the insertion of the vector field $v$ into the form $\theta$ according to the standard rule of exterior calculus, \cite{Holm1}, being $\ii$ the \textit{interior product} or \textit{contraction}, \cite[Ch. 3]{Holm2}.

To treat SDE on manifolds in It\^{o} sense, we will make use of the notion of \textit{Schwartz operator} $s$, that is a family $\left (s(x,z)\right )_{x \in \XX, z \in \RR^m})$ such that $s(x,z):\tau_x \XX \to \RR^m$, being $\tau_x \XX$ the vector space of tangent vectors of order 2 to $\XX$ at $x$, see, \cite[Ch. 6]{EmeBook} and \cite[Appendix 6]{Ort}.

Similarly to the case of Stratonovich SDE on a manifold, we will say that, given $Z$ a $\mathcal{M}-$valued semimartingale, the $\mathcal{N}-$valued semimartingale $X$ is the solution  to the \textit{It\^{o} stochastic differential equation}
\begin{equation}\label{EQN:Ito1}
d X_t = s(X_t,Z_t)d Z_t\,,
\end{equation}
with initial condition $X_0$, if
\begin{equation}\label{EQN:ESPHSIOAdj}
\int_0^t \langle \theta,\delta X_s\rangle = \int_0^t \langle s^*(X_s,Z_s)\theta,\delta Z_s\rangle\,,
\end{equation}
holds $\forall\,\theta \in \Omega_2(\mathcal{N})$.

It can be shown, \cite{Eme}, that to any \textit{Stratonovich operator} $e$ can be associated a \textit{Schwartz operator} $s$. Consider $\gamma(t) = (x(t),y(t)) \in \mathcal{M}\times \mathcal{N}$ a smooth curve such that $e(x(t),y(t))(\dot{x}(t)) = \dot{y}(t)$, we can define 
\begin{equation}\label{EQN:Add1}
s(x(t),y(t))(L_{\ddot{x}(t)}) := L_{\ddot{y}(t)}\,,
\end{equation}
where, for any $h \in C^\infty(\mathcal{M})$ and $g \in C^\infty(\mathcal{N})$, we get
\[
\begin{split}
&L_{\ddot{x}(t)} \in \tau_{x(t)}\mathcal{M}\,, \quad L_{\ddot{x}(t)} [h] := \frac{d^2}{dt^2}h(x(t))\,,\\
&L_{\ddot{y}(t)} \in \tau_{y(t)}\mathcal{N}\,, \quad L_{\ddot{y}(t)} [g] := \frac{d^2}{dt^2}g(y(t))\,.
\end{split}
\]

It can be seen that the relation \eqref{EQN:Add1} completely defines $s$ and furthermore that the SDE \eqref{EQN:Strat1} and \eqref{EQN:Ito1} are equivalent.

\color{black}

\section{Explicit input--state--output port-Hamiltonian systems on manifolds}\label{SEC:PHS}

\subsection{Explicit input--state--output deterministic port-Hamiltonian systems}\label{SEC:DESPHS}

\fc{In order to provide a rigorous generalization of PHS able to take into account for stochastic perturbations, we first consider a geometric formulation of PHS. In particular, we exploit the coordinate--free definition of PHS in terms of Poisson or Leibniz brackets.} We would like to underline that this is not the usual starting point in defining PHS; nevertheless, it emphasizes the main features and mathematical aspects that \textit{stochastic PHSs} should enjoy, giving first insights into a general definition of \textit{implicit stochastic PHS}. Therefore, within the present section, we are going to introduce \textit{Hamiltonian dynamics} in Poisson and Leibniz manifolds. \fc{Since latter topic is well established in literature}, we limit ourselves to recall the fundamental results to give the reader a self contained treatment, while we refer to \cite{GBR, Holm1,Holm5,Olv,Vai} for an in-depth analysis of the topic from a pure deterministic perspective. 

Consider a $n-$dimensional differentiable manifold $\XX$ and the space of smooth real functions on $\XX$, $C^\infty \left (\XX\right )$; \fc{we will} denote by 
\[
\PS{\cdot}{\cdot} : C^\infty \left (\XX\right ) \times C^\infty \left (\XX\right ) \to C^\infty \left (\XX\right )\,,
\]
the \textit{Poisson brackets} satisfying bilinearity, skew-symmetry, Jacobi identity and Leibniz rule, \cite{Holm1}.

Properties of the Poisson bracket, and in particular the Leibniz rule, implies that the value $\PS{F}{G}(x)$, with $F$, $G \in C^\infty(\XX)$, \fc{$x \in \XX$}, depends on both arguments only through the derivative. We can thus associate to a Poisson bracket a controvariant skew--symmetric 2-tensor called \textit{Poisson tensor}
\[
B(x):\Omega^1\left (\XX\right ) \times \Omega^1\left (\XX\right ) \to C^\infty\left (\XX\right )\,,
\] 
defined as
\[
B(x)(\dd F,\dd G)=\PS{F}{G}(x)\,,\quad F,\, G \in C^\infty\left (\XX\right )\,,
\]
where $\dd F:= \partial_{x^i} F dx^i$ and $\dd G:= \partial_{x^i}G d x^i$ are the exterior derivatives of the functions $F$ and $G \in C^\infty \left (\XX\right )$, \fc{respectively}, \cite[Ch. 3]{Holm1}, \fc{having shorthand denoted} by $\partial_{x^i}$ the partial derivative w.r.t. $x^i$, i.e. $\partial_{x^i} := \frac{\partial}{\partial x^i}$, \fc{while} by $\partial_x = (\partial_{x^1},\dots,\partial_{x^n})$ the gradient.

To a \textit{Poisson tensor} we can \fc{associate} a morphisms
\[
B^{\#}(x) : \TT^* \XX \to \TT \XX\,,
\]
defined as
\begin{equation}\label{EQN:BAsh}
B(x)(\dd F,\dd G)=\langle\dd F(x), B^{\#}(\dd G(x))\rangle\,.
\end{equation}

A \textit{Hamiltonian system} on a Poisson manifold $\left (\XX,\PS{\cdot}{\cdot}\right )$ with \textit{Hamiltonian function} $H \in C^\infty\left (\XX\right )$ is thus defined by the differential equation
\begin{equation}\label{EQN:HE}
\dot{x} = \PS{x}{H} = B^{\#}(\dd H) =: \mathrm{X}_H(x)\,.
\end{equation}

Equation \eqref{EQN:HE} is called \textit{Hamilton equations} of motion and $\mathrm{X}_H$ is called \textit{Hamiltonian vector field} generated by \fc{the Hamiltonian} $H$. In particular, \cite[Ch. 4]{Holm1}, equation \eqref{EQN:HE} is equivalent to requiring
\begin{equation}\label{EQN:HE2}
\dot{F} = \PS{F}{H}\,,
\end{equation}
for all differentiable functions $F:\TT^* \XX \to \RR$. 

\textit{Hamilton equations} of motion \eqref{EQN:HE} can be further generalized to define an \textit{(explicit) input--state--output port-Hamiltonian system} (PHS) on a Poisson manifold $\left (\XX,\PS{\cdot}{\cdot}\right )$ with \textit{Hamiltonian function} $H \in C^\infty\left (\XX\right )$ as
\begin{equation}\label{EQN:EPHS}
\begin{cases}
\dot{x} = \mathrm{X}_H(x) + \sum_{i=1}^m u_i \mathrm{X}_{H_{g_i}}(x)\,,\\
y_i = \PS{H}{H_{g_i}}\,,
\end{cases}
\end{equation}
with $x \in \RR^n$ and where $\mathrm{X}_{H_{g_i}}$ is the \textit{Hamiltonian vector field} associated to the Hamiltonian $H_{g_i}$, $u_i \in U$ denotes the $i-th$ input and $y_i \in U^*$ is the $i-th$ output of the system, \cite{Leu,Tab}.

\fc{Using the properties of the Poisson bracket} the \textit{(explicit) input--state--output port-Hamiltonian system} PHS \eqref{EQN:EPHS} can be expressed in local coordinates as
\begin{equation}\label{EQN:EPHSc}
\begin{cases}
\dot{x} = J(x) \partial_x H + \sum_{i=1}^m u_i g_i(x)\,,\\
y_i = g_i^T(x) \partial_x H\,,
\end{cases}
\end{equation}
where $J$ is a skew-symmetric \fc{structure} matrix of suitable dimensions and $g_i$ are $m$ suitable regular enough functions, \cite{Leu}.

We can further include dissipation into the PHS \eqref{EQN:EPHS} by considering
\begin{equation}\label{EQN:ExplLeib1}
\begin{cases}
\dot{x} = \mathrm{X}_H(x) + \sum_{i=1}^m u_i \mathrm{X}_{H_{g_i}}(x) + u^R \mathrm{X}_{H_{g^R}}\,,\\
y_i = \PS{H}{H_{g_i}}\,,\\
y^R = \PS{H}{H_{g^R}}\,,
\end{cases}
\end{equation}
\fc{where $u^R$ describes the} dissipation relation $u^R = \tilde{R}(x)y^R$, with $\tilde{R}$ symmetric and positive semi--definite.

Defining the \textit{Leibniz bracket} for $F$, $G \in C^\infty \left (\XX\right )$ as
\begin{equation}\label{EQN:LeibB}
[F,G]_{L} = B(F,G) - \langle \dd F,\tilde{R}(x)\langle \dd G,g\rangle g\rangle\,,
\end{equation}
and setting the structure matrix as
\begin{equation}\label{EQN:StructLeib}
J(x)-(g^R(x))^T \tilde{R}(x)g^R(x)\,,
\end{equation}
we can define the \textit{(explicit) input--state--output port-Hamiltonian system with dissipation} to be
\begin{equation}\label{EQN:EPHSD}
\begin{cases}
\dot{x} = \mathrm{X}^L_H(x)  + \sum_{i=1}^m u_i \mathrm{X}_{H_{g_i}}(x)\,,\\
y_i = \{H,H_{g_i}\}\,,
\end{cases}
\end{equation}
\fc{where $\mathrm{X}^R_H$ is now the \textit{Hamiltonian vector field} with dissipation defined by the \textit{Leibniz bracket}}
\[
\fc{\mathrm{X}^L_H(\cdot) := [\cdot,H]_{L}}\,.
\]

\color{black}
From equation \eqref{EQN:LeibB} it can be seen that the \textit{Leibniz bracket} is composed by a skew-symmetric part $B$ and a symmetric positive semi--definite part $\langle \dd F,\tilde{R}(x)\langle \dd G,g\rangle g\rangle$. It thus follows, using equation \eqref{EQN:ExplLeib1}, that
\begin{equation}\label{EQN:PassLeib}
\dot{H}(x(t)) = [H,H]_{L}(x(t)) + \sum_{i=1}^m u_i \{H,H_{g_i}\}(x(t)) \leq y^T(t) u(t)\,.
\end{equation}

Equation \eqref{EQN:PassLeib} is known as the \textit{passivity property} and broadly states that the energy variation of the system $\dot{H}(x(t))$ cannot be greater than the energy supplied to the system $ y^T(t) u(t)$. Such property is crucial in several engineering systems and it is extensively used to control purposes, see, e.g.,  \cite{VdSBook,Sec}. In the case of a purely skew--symmetric bracket, i.e. $\tilde{R} = 0$, the energy is conserved and we recover the controlled Poisson dynamics \eqref{EQN:EPHS} so that the inequality in equation \eqref{EQN:PassLeib} becomes an equality. In such a case the system is said to be \textit{lossless}. These properties are at the very core of port--Hamiltonian formulation and they cannot be straightforwardly generalizable to the stochastic case, indeed particular care must be taken when noise enters into system.

\color{black}
From the structure matrix for the \textit{Leibniz bracket} \eqref{EQN:StructLeib} we have that in local coordinates the PHS \eqref{EQN:EPHSD} becomes
\begin{equation}\label{EQN:EPHSDc}
\begin{cases}
\dot{x} = (J(x)-R(x)) \partial_x H(x) + \sum_{i=1}^m u_i g_i(x)\,,\\
y_i = g_i^T(x) \partial_x H(x)\,,
\end{cases}
\end{equation}
with $R(x):=(g^R(x))^T \tilde{R}(x)g^R(x)$.

\subsection{Explicit input--state--output stochastic port-Hamiltonian systems}\label{SEC:ESPHS}

\fc{We are now in position to generalize the notion of (explicit) input--state--output port-Hamiltonian systems to the (explicit) \textit{input--state--output stochastic port-Hamiltonian systems}}. We will consider a filtered \fc{and} complete probability space $\left (\Omega,\mathcal{F},\left (\mathcal{F}_t\right )_{t \in \RR_+},\mathbb{P}\right )$ satisfying standard assumptions, namely right--continuity and saturation by $\mathbb{P}$--null sets.

\fc{As stated in Section \ref{SEC:ItoStr}}, we will denote by $\delta Z$ the integration in the sense of Stratonovich \fc{along the semimartinagle $Z$}, and by $d Z$ the integration in the sense of It\^{o}. The primary motivation \fc{in} using Stratonovich stochastic calculus  \fc{is by its enjoyed good geometric properties, particularly concerning the fact that standard chain rule of calculus holds}. This will allows us to prove one of the main properties \fc{characterizing} port--Hamiltonian systems, \fc{namely:}  energy conservation. Further, it can be shown that any stochastic integral in Stratonovich form can be converted into a corresponding \fc{ It\^{o} integral. Therefore}, in what follows we will show how an analogous treatment can be done using integration in the sense of It\^{o}.

\color{black}
\fc{Manifold-valued SDE, as discussed briefly in Section \ref{SEC:ItoStr},} allows us to introduce the following generalization to consider semimartingale perturbed input--state--output PHS on a Poisson--manifold. Consider $(\XX,\PS{\cdot}{\cdot})$ to be a Poisson manifold, an \textit{(explicit) input--output stochastic port-Hamiltonian system} with Hamiltonian function $H: \XX \to \RR$, stochastic potential $H_N: \XX \to \RR$ and \fc{driving stochastic martingales} $Z$, $Z^{N}$ and $Z^g$, is defined as the solution to the manifold-valued SDE 
\begin{equation}\label{EQN:ESPHSIO}
\begin{cases}
\delta X_t = \mathrm{X}_{H}(X_t) \delta Z_t + u \mathrm{X}_{H_{g}}(X_t) \delta Z_t^g + \mathrm{X}_{H_{N}}(X_t) \delta Z^{N}_t \,,\\
y_t = \PS{H}{H_g}\,,
\end{cases}
\end{equation}
where the vector fields have been defined in terms of the Poisson bracket as with the deterministic input--state--output PHS \eqref{EQN:EPHS}
\[
\begin{split}
\mathrm{X}_{H} (\cdot) &:= \PS{\cdot}{H} \,,\\
\mathrm{X}_{H_g} (\cdot) &:= \PS{\cdot}{H_g} \,,\\
\mathrm{X}_{H_N} (\cdot) &:= \PS{\cdot}{H_N} \,.
\end{split}
\]

Using the fact that $T_z \RR^3 \simeq \RR^3$, we can define the Stratonovich operator
\[
\begin{split}
&e(x,z):\RR^{3} \to \TT_x \XX\,,\\
&e(x,z)(r_0,r_{N},r_g):= r_0 \mathrm{X}_{H}(x) + r_{N} \mathrm{X}_{H_{N}}(x) + r_{g} u \mathrm{X}_{H_g}(x)\,,
\end{split}
\]
so that equation \eqref{EQN:ESPHSIO} can be compactly rewritten c as
\[
\begin{cases}
\delta X_t = e(Z_t,X_t) \delta \mathbf{Z}_t\,,\\
y_t = \PS{H}{H_g}\,,
\end{cases}
\]
with $\mathbf{Z}_t := (Z_t,Z^{N}_t,Z^g_t)$. The adjoint of the Stratonovich operator $e$ is given by
\[
\begin{split}
&e^*(x,z): \TT_x^* \XX \to \RR^{3}\,,\\
&e^*(x,z)(\theta) := -\dd \hat{H} (B^{\#}(\theta))(x) \,,
\end{split}
\]
where we have defined for short
\[
\hat{H}:= H + H_{N} + u H_g\,.
\]

According to equation \eqref{EQN:ESPHSIOAdj}, the  semimartingale solution $X$ to equation \eqref{EQN:ESPHSIO} must be intended as
\begin{equation}\label{EQN:ESPHSIOAdj2}
\int_0^t \langle \theta,\delta X_s\rangle = -\int_0^t \langle \dd \hat{H} (B^{\#}(\theta))(X_s)\theta,\delta \mathbf{Z}_s\rangle\,,
\end{equation}
for any $\theta \in \Omega^1(\XX)$.

\color{black}

\begin{Remark}
The generalization of equation \eqref{EQN:ESPHSIO} to the multi--input multi--output case \fc{yields}
\begin{equation}\label{EQN:ESPHSIOMIO}
\begin{cases}
\delta X_t = \mathrm{X}_{H}(X_t) \delta Z_t + \sum_{i=1}^{m} u_i \mathrm{X}_{H_{g_i}}(X_t) \delta Z_t^{g_i} + \sum_{j=1}^l \mathrm{X}_{H_{N}^j}(X_t) \delta Z^{N_j}_t\,,\\
y^i_t = \PS{H}{H_{g_i}}\,.
\end{cases}
\end{equation}

\color{black}
As a very particular case, consider the case of an autonomous system, i.e., $u \equiv 0$, so that equation \eqref{EQN:ESPHSIOMIO} reads as
\begin{equation}\label{EQN:ESPHSIOMI}
\delta X_t = \mathrm{X}_{H}(X_t) \delta Z_t+ \sum_{j=1}^l \mathrm{X}_{H_{N}^j}(X_t) \delta Z^{N_j}_t\,.
\end{equation}

As before, we can introduce the Stratonovich operator
\[
e(x,z)(r_0,r_{N}):= r_0 \mathrm{X}_{H}(x) + \sum_{j=1}^m r^j_{N} \mathrm{X}_{H_{N}^j}(x) \,,
\]
and write equation \eqref{EQN:ESPHSIOMI} for short as 
\[
\delta X_t = e(Z_t,X_t) \delta \mathbf{Z}_t\,,
\]
with $\mathbf{Z}_t = (Z_t,Z^{N}_t)$. Equation \eqref{EQN:ESPHSIOMI} coincides exactly with the stochastic Hamilton equations of motion on a Poisson manifold as defined in \cite{Ort}.
\color{black}

\demo  
\end{Remark}

To take into account dissipation in the explicit SPHS, following \cite{OrtLei} \fc{, we can introduce a tensor} map $B_{L}:\TT^* \XX \times \TT^* \XX \to \RR$ defined as
\begin{equation}\label{EQN:LeiMor22}
B_{L}(\dd F,\dd G):=[F,G]_{L}\,;
\end{equation}
to which we can associate a vector bundle $B_{L}^{\#}:\TT^* \XX \to \TT \XX$ by the relation
\begin{equation}\label{EQN:LeiMor23}
B_{L}(\dd F,\dd G)=\langle\dd F,B_{L}^{\#}(\dd G)\rangle\,.
\end{equation}

\color{black}
Consider a Leibniz manifold $(\XX,[\cdot,\cdot]_{L})$, an \textit{(explicit) stochastic input--state--output port-Hamiltonian system} with dissipation and with Hamiltonian function $H: \XX \to \RR$, stochastic potential $H_N: \XX \to \RR$ and \fc{driving stochastic martingales} $Z$, $Z^{N}$ and $Z^g$, is defined as the solution to the manifold-valued SDE 
\begin{equation}\label{EQN:ESPHSDiss}
\begin{cases}
\delta X_t = X_{H}^L(X_t) \delta Z_t + u X_{H_{g}}^L(X_t) \delta Z_t^g + X_{H_{N}}^L(X_t) \delta Z^{N}_t \,,\\
y_t = [H,H_g]_L\,,
\end{cases}
\end{equation}
where
\[
\begin{split}
\mathrm{X}_{H}^L (\cdot) := [\cdot,H]_L \,,\quad \mathrm{X}_{H_g}^L (\cdot) := [\cdot,H_g]_L \,,\quad \mathrm{X}_{H_N}^L (\cdot) := [\cdot,H_N]_L \,.
\end{split}
\]

Analogously as seen for the Poisson case, equation \eqref{EQN:ESPHSDiss} can be written in terms of a Stratonovich operator $e$ whose adjoint is given by
\[
\begin{split}
&e^*(x,z): \TT_x^* \XX \to \RR^{3}\,,\\
&e^*(x,z)(\theta) := -\dd \hat{H} (B_{L}^{\#}(\theta))(x) \,,
\end{split}
\]
where
\[
\hat{H}:= H + H_{N} + u H_g\,.
\]

According to \eqref{EQN:ESPHSIOAdj}, the solution semimartingale $X$ to \eqref{EQN:ESPHSIO} has to satisfy
\begin{equation}\label{EQN:ESPHSIOAdj2Diss}
\int_0^t \langle \theta,\delta X_s\rangle = -\int_0^t \langle \dd \hat{H} (B_L^{\#}(\theta))(X_s)\theta,\delta \mathbf{Z}_s\rangle\,,
\end{equation}
for any $\theta \in \Omega^1(\XX)$.

\color{black}
As in the deterministic case, from the structure matrix for the \textit{Leibniz bracket} \eqref{EQN:StructLeib} we have that in local coordinates the SPHS \eqref{EQN:ESPHSDiss} becomes
\begin{equation}\label{EQN:ESPHSDc}
\begin{cases}
\delta X_t = (J(X_t)-R(X_t)) \partial_x H(X_t)\delta Z_t + u g(X_t)\delta Z^g_t + \xi(X_t) \delta Z_t^N\,,\\
y_t = g^T(X_t) \partial_x H(X_t)\,,
\end{cases}
\end{equation}
with $R(x):=(g^R(x))^T \tilde{R}(x)g^R(x)$.

\begin{Example}
In this example we compare the deterministic modeling of a $n-$degree of freedom ($n-$DOF) manipulator with its stochastic version where noise is included into the system.

\begin{description}[style=unboxed,leftmargin=0cm]
\item[(i) - Deterministic $n-$DOF.] Let consider the $n$-degree of freedom ($n$-DOF) and gravity-compensated manipulator
\begin{equation}\label{EQN:NDOFEx}
M(q)\ddot{q} + C(q,\dot{q}) \dot{q} + R \dot{q} = u \,,\\
\end{equation}
where $q = (q^1,\dots,q^n) \in \RR^n$ is the set of generalized coordinates, $C$ is the Coriolis and centrifugal term and $u$ is the generalized command force, \cite{de2012theory}. 

By introducing the generalized momentum $p = M(q)\dot{q}$, $p = (p^1,\dots,p^n) \in \RR^n$ the system \eqref{EQN:NDOFEx} can be rewritten using the SPHS formalism \cite{Sec} as
\begin{equation}\label{EQN:SPHSET}
\begin{cases}
\begin{pmatrix}
\dot{q}(t)\\
\dot{p}(t)
\end{pmatrix} &=
\left (
\begin{pmatrix}
0 & I\\
-I & -R (q,p)
\end{pmatrix}
\begin{pmatrix}
\partial_{q} H \\
\partial_{p} H
\end{pmatrix}
+
\begin{pmatrix}
0\\
g
\end{pmatrix}
u \right ) \,,\\
y&=
\begin{pmatrix}
0 & g^T
\end{pmatrix}
\begin{pmatrix}
\partial_{q} H \\
\partial_{p} H
\end{pmatrix}
 = M^{-1}(q) p = \dot{q}\,,
\end{cases}
\end{equation}
where $R(q,p)$ is the dissipation matrix and
\[
H(q,p) = \frac{1}{2}p^T M^{-1}(q) p\,,
\] 
is the kinetic energy. Since the robot is gravity-compensated we do not have to consider the potential energy in the definition of the Hamiltonian $H$. It is trivial to see that equation \eqref{EQN:SPHSET} can be written in the form of equation \eqref{EQN:EPHSDc} with $x := (q,p)$ as
\[
\begin{cases}
\dot{x} = (J(x)-R(x)) \partial_x H(x) + g(x) u\,,\\
y = g^T(x) \partial_x H(x)\,,
\end{cases}
\]

\item[(ii) - Stochastic $n-$DOF.] Usually, the $n-$DOF system \eqref{EQN:NDOFEx} interacts with an unknown external environment, so that a new term is added in the r.h.s. of equation \eqref{EQN:NDOFEx}. The effect of the environment on the system is often unknown so it can be modelled as a stochastic process. The most classical assumption is that the environment is described by a Brownian motion $W$, \cite{CDPMTank}, and equation \eqref{EQN:SPHSET} can be compactly written as
\begin{equation}\label{EQN:ESPHSDc22}
\begin{cases}
\delta X_t = (J(X_t)-R(X_t)) \partial_x H(X_t)\delta t + g(X_t)u\delta t + \xi(X_t) \delta W_t\,.\\
y_t = g^T(X_t) \partial_x H(X_t)\,,
\end{cases}
\end{equation}
where $X$ is the stochastic counterpart of $x$.

Such equation recovers, apart from the choice of integration, the classical definition of SPHS in \cite{Sat1,Sat2,Sat3,Sat4, CDPM_IFAC,CDPMTank}.

Nonetheless, more general types of noise can be considered so that a general semimartingale $Z^N$ may replace the Brownian motion, obtaining 
\begin{equation}\label{EQN:ESPHSDc222}
\begin{cases}
\delta X_t = (J(X_t)-R(X_t)) \partial_x H(X_t)\delta t + g(X_t)u\delta t + \xi(X_t) \delta Z^N_t\,.\\
y_t = g^T(X_t) \partial_x H(X_t)\,.
\end{cases}
\end{equation}

Equation \eqref{EQN:ESPHSDc222} coincides with \eqref{EQN:ESPHSDc} where $Z$ and $Z^g$ are the deterministic processes given by $(t,\omega)\mapsto t$.

\item[(iii) - Stochastic $n-$DOF with stochastic Hamiltonian.] We can assume that also the energy of the system, and thus the Hamiltonian, is perturbed by a stochastic semimartingale $Z$, so that we obtain the more general case of 
\begin{equation}\label{EQN:ESPHSDc2223}
\begin{cases}
\delta X_t = (J(X_t)-R(X_t)) \partial_x H(X_t)\delta Z_t + g(X_t) u\delta t + \xi(X_t) \delta Z^N_t\,.\\
y_t = g^T(X_t) \partial_x H(X_t)\,.
\end{cases}
\end{equation}

\item[(iv) - Stochastic $n-$DOF with stochastic Hamiltonian and stochastic control.] We can finally assume that also the control is perturbed by a stochastic noise recovering the most general formulation as in \eqref{EQN:ESPHSDc},
\begin{equation}\label{EQN:ESPHSDc22234}
\begin{cases}
\delta X_t = (J(X_t)-R(X_t)) \partial_x H(X_t)\delta Z_t + u g(X_t)\delta Z^g_t + \xi(X_t) \delta Z^N_t\,.\\
y_t = g^T(X_t) \partial_x H(X_t)\,.
\end{cases}
\end{equation}
\end{description}
\demo 
\end{Example}

Before entering into details concerning the energy conservation property of SPHS, we prove the following change of variable formula.

\begin{Proposition}\label{PRO:ChangeVar}
Let $X$ be the solution to the SPHS \eqref{EQN:ESPHSDiss}, then for any $\varphi \in C^{\infty}(\XX)$ it holds
\begin{equation}\label{EQN:ChangeVar}
\begin{cases}
\delta \varphi(X_t) = [\varphi,H]_L (X_t)\delta Z_t + u [\varphi,H_g]_L(X_t) \delta Z^g_t + [\varphi,H_{N}]_L (X_t)\delta Z^{N}_t \,,\\
y_t =  [H,H_g]_L\,.
\end{cases}
\end{equation}
\end{Proposition}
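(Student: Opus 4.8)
The plan is to apply the Stratonovich change-of-variables formula (the ordinary chain rule, which is the whole point of using Stratonovich integration) to the process $\varphi(X_t)$, where $X$ solves \eqref{EQN:ESPHSDiss}. First I would recall from Definition \ref{DEF:SDEStrat} and Theorem \ref{THM:E!Diss} that $X$ is characterized by
\[
\int_0^t \langle \theta, \delta X_s\rangle = -\int_0^t \langle \dd\mathcal{H}(B^{\#}_L(\theta))(X_s), \delta \mathbf{Z}_s\rangle
\]
for every $\theta \in \Omega^1(\XX)$, with $\mathbf{Z} = (Z, Z^{N}, Z^C)$ and $\mathcal{H} = H + H_{N} + uH_g$. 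The key observation is that $\varphi(X_t)$ is a real-valued semimartingale, and since Stratonovich calculus obeys the classical chain rule on manifolds (see \cite{EmeBook}), one has $\delta\varphi(X_t) = \langle \dd\varphi(X_t), \delta X_t\rangle$, i.e. $\varphi(X_t) - \varphi(X_0) = \int_0^t \langle \dd\varphi, \delta X_s\rangle$.

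Next I would take $\theta = \dd\varphi$ in the defining relation above. This immediately gives
\[
\delta\varphi(X_t) = \langle \dd\varphi, \delta X_t\rangle = -\langle \dd\mathcal{H}\big(B^{\#}_L(\dd\varphi)\big)(X_t), \delta\mathbf{Z}_t\rangle\,.
\]
Expanding $\mathcal{H} = H + H_{N} + uH_g$ and $\delta\mathbf{Z}_t = (\delta Z_t, \delta Z^{N}_t, \delta Z^C_t)$ componentwise, and recalling that the Stratonovich operator $e(x,z)$ from the proof of Theorem \ref{THM:E!Diss} sends $(r_0, r_{N}, r_C)$ to $r_0 X^L_H(x) + r_{N} X^L_{H_{N}}(x) + r_C u X^L_{H_g}(x)$, its adjoint acts on $\dd\varphi$ by pairing against each Leibniz-Hamiltonian vector field separately. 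Using the defining relation \eqref{EQN:LeiMor} of $B^{\#}_L$, namely $\langle \dd\varphi, B^{\#}_L(\dd G)\rangle = [\varphi, G]_L$ together with the skew-like identity $-\langle \dd G, B^{\#}_L(\dd\varphi)\rangle = \langle \dd\varphi, B^{\#}_L(\dd G)\rangle$ — which holds for the Leibniz bracket viewed as a derivation tensor exactly as in the Poisson-bracket computation in the proof of the Proposition following \eqref{EQN:Form} — one rewrites each term $-\langle \dd H_i(X_t), B^{\#}_L(\dd\varphi)(X_t)\rangle$ as $[\varphi, H_i]_L(X_t)$. Collecting the three terms yields precisely the first line of \eqref{EQN:ChangeVar}; the output equation $y_t = [H, H_g]_L$ is unchanged, being carried over verbatim from \eqref{EQN:ESPHSDiss}.

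The main subtlety — not a deep obstacle, but the point that needs care — is justifying the sign manipulation $-\langle \dd G, B^{\#}_L(\dd\varphi)\rangle = \langle \dd\varphi, B^{\#}_L(\dd G)\rangle$ for the Leibniz bracket, since unlike the Poisson tensor $B$ the Leibniz tensor $B_L$ is \emph{not} skew-symmetric (it contains the symmetric dissipative part $R$). What actually makes the argument go through is that one does not need global skew-symmetry of $B_L$: the computation in the proof of the Proposition after \eqref{EQN:Form} only ever uses that $[\cdot,\cdot]_L$ is a derivation in each slot, so that pairing $\dd\mathcal{H}$ against $B^{\#}_L(\dd\varphi)$ and re-expressing via $B_L(\dd\varphi, \dd\mathcal{H}) = [\varphi,\mathcal{H}]_L$ is legitimate directly — one should present the step as $-\langle \dd\mathcal{H}, B^{\#}_L(\dd\varphi)\rangle = -B_L(\dd\varphi,\dd\mathcal{H})\cdot(-1)$... in fact cleanly: define the solution via $\langle\theta,\delta X\rangle = \langle B^{\#}_L{}^{*}\theta \text{ paired appropriately}\rangle$ and simply set $\theta = \dd\varphi$, reading off $[\varphi, H_i]_L$ from \eqref{EQN:LeiMor}. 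I would therefore structure the proof as: (1) invoke the Stratonovich chain rule to reduce to computing $\langle\dd\varphi,\delta X_t\rangle$; (2) substitute $\theta=\dd\varphi$ into the characterization from Theorem \ref{THM:E!Diss}; (3) expand in the three semimartingale components and identify each resulting pairing with a Leibniz bracket via \eqref{EQN:LeiMor}, mirroring the deterministic computation already carried out after \eqref{EQN:Form}.
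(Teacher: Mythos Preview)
Your proposal is correct and follows essentially the same route as the paper: invoke the Stratonovich chain rule $\varphi(X_t)-\varphi(X_0)=\int_0^t\langle\dd\varphi,\delta X_s\rangle$ (the paper cites \cite[Prop.~7.4]{EmeBook}), substitute $\theta=\dd\varphi$ into the characterization of Theorem~\ref{THM:E!Diss}, expand $\mathcal{H}$ over the three semimartingale components, and identify each pairing as a Leibniz bracket via \eqref{EQN:LeiMor}. Your flagging of the sign manipulation for the non-skew Leibniz tensor is a fair point---the paper's proof simply writes the identification $-\langle\dd H_i,B^{\#}_L(\dd\varphi)\rangle=[\varphi,H_i]_L$ without comment---so your instinct to justify it directly through the definition of $B_L^{\#}$ rather than by an antisymmetry argument is the right way to present it.
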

\begin{proof}
Notice that, using \cite[Prop. 7.4]{EmeBook}, it holds
\[
\int_0^t \langle \dd f, \delta X_s\rangle = f(X_t)-f(X_0)\,.
\]
Taking thus $\theta = \dd \varphi$ in equation \eqref{EQN:ESPHSIOAdj2Diss}, we have that
\[
\begin{split}
-& \int_0^t \langle \dd \hat{H}\left (B_L^{\#}(\dd \varphi)\right )(X_s),\delta Z_s\rangle =\\
&= -\int_0^t \langle \langle \dd H, B_L^{\#}(\dd \varphi)\rangle (X_s),\delta Z_s\rangle +\\
&-\int_0^t \langle \langle \dd H_{N}B_L^{\#}(\dd \varphi)\rangle(X_s),\delta Z^{N}_s\rangle + \\
&- \int_0^t u \langle \langle \dd H_g B_L^{\#}(\dd \varphi)\rangle(X_s),\delta Z^C_s\rangle =\\
&= [\varphi,H]_L (X_t)\delta Z_t + [\varphi,H_{N}]_L (X_t)\delta Z^{N}_t +  u [\varphi,H_g]_L(X_t) \delta Z^C_t\,,
\end{split}
\]
where the last equality follows from equations \eqref{EQN:LeiMor22}--\eqref{EQN:LeiMor23}.
\end{proof}

Concerning energy conservation and passivity discussed in equation \eqref{EQN:PassLeib} in the deterministic case, their generalizations to the stochastic case are not trivial. Broadly speaking, the noise can inject energy into the system so that specific conditions on the noise must be imposed to obtain losslessness and passivity. In particular, due to the presence of the semimartingale $Z$, SPHS \eqref{EQN:ESPHSDiss} is not dissipative under standard requirement of the structure matrix $R$ being symmetric and positive semi-definite. This aspects will play a central role in developing some aspects of implicit SPHS and it will be clarified in subsequent sections. It is worth stressing that it is difficult to obtain specific conditions under which passivity or energy conservation holds for the general case; we will limit ourselves to underline the main aspects and more detailed will be given later on when implicit stochastic PHS will be introduced.

Three relevant considerations are in order regarding energy conservation and passivity:
\begin{description}[style=unboxed,leftmargin=0cm]
\item[(i)] Consider the case of a stochastic system with no dissipation and no external control, so that we recover the case of an autonomous stochastic Hamiltonian system on a Poisson manifold. From a physical point of view, it is natural to look for conditions under which $\mathbb{P}-$a.s. energy conservation holds, that is,
\[
\delta H(X_t) = 0\,.
\]
In the deterministic case, it is trivial to see that energy conservation holds due to the skew-symmetric property of the Poisson bracket. As deeply argued in \cite{Ort}, the presence of a stochastic Hamiltonian can destroy the energy conservation property of the system, because
\[
\delta H(X_t) = \{H,H_N\}\delta Z_t^N\,.
\]
To obtain the energy conservation property the additional condition that the stochastic potential $H_N$ must be an involution w.r.t. the Hamiltonian $H$, meaning that $\{H,H_N\}=0$, is required.

\item[(ii)] Concerning energy conservation, it is often more realistic to study when a weaker notion of energy conservation holds, named \textit{weak energy conservation} and defined as
\begin{equation}\label{EQN:WeakDef}
\mathbb{E}X_t - \mathbb{E}X_0 = 0\,.\\
\end{equation}

Weak energy conservation is easier to be satisfied in real application and for this reason it is the most natural definition of energy conservation usually considered for stochastic systems.

\item[(iii)] Consider now the case when no external noise is considered in equation \eqref{EQN:ChangeVar}, i.e., $H_N \equiv 0$, and the semimartingale perturbing the control is the trivial deterministic semimartingale $Z^g_t := t$. Recalling that the \textit{Leibinz bracket} is decomposed in the present case into a purely skew--symmetric bracket $\{\cdot,\cdot\}_{\mbox{skew}}$ and a symmetric positive semi--definite bracket $\{\cdot,\cdot\}_{\mbox{\mbox{sym}}}$, that is,
\[
[\cdot,\cdot]_{L} = \{\cdot,\cdot\}_{\mbox{skew}} - \{\cdot,\cdot\}_{\mbox{sym}}\,.
\]
Then by, equation \eqref{EQN:ChangeVar} with $\varphi = H$, we have
\[
\begin{cases}
\delta H(X_t) &= -\{H,H\}_{\mbox{sym}} (X_t)\delta Z_t + u [\varphi,H_g]_L(X_t) \delta t\,,\\
y_t &=[\varphi,H_g]_L\,.
\end{cases}
\]

From a control perspective, it is desirable to require the system to be $\mathbb{P}-$a.s. passive, that is,
\[
\delta H(X_t) \leq y^T_t u_t \delta t\,.
\]
Analogously to strong energy conservation, we will refer to the above condition as \textit{strong passivity}. Notice that, even if $\{\cdot,\cdot\}_{\mbox{sym}}$ is symmetric positive semi--definite, the presence of the semimartingale $Z$ does not allow to infer that
\begin{equation}\label{EQN:PosSemi}
\{H,H\}_{\mbox{sym}} (X_t)\delta Z_t(\omega) \geq 0\,.
\end{equation}

Condition \eqref{EQN:PosSemi} does not hold for the vast majority of relevant examples, where even the most trivial case of a Brownian motion $B$ does not satisfy such inequality $\mathbb{P}-$a.s. Consequently, usually within the stochastic setting, a weaker notion, named \textit{weak passivity}, is considered
\begin{equation}\label{EQN:WeakDefP1}
\mathbb{E}X_t - \mathbb{E}X_0 \leq \mathbb{E} y^T_t u_t\,.
\end{equation}

It is immediate to see that \textit{weak passivity} \eqref{EQN:WeakDefP1} is valid in a much broad range of situations and for this reason the weak notion is usually considered in literature. This topic will be expanded and treated in more details later as it plays a key role in the implicit definition of a stochastic PHS.
\end{description}

\color{black}
\subsection{It\^{o} explicit input--state--output stochastic port-Hamiltonian systems}

In the present Section we will show how the SPHS in the Stratonovich form can be converted into the corresponding SPHS in It\^{o} form.

\begin{Proposition}\label{PRO:SPHSTOIto}
Let $X$ be the solution to the Stratonovich PHS \eqref{EQN:ChangeVar}, and let $Z$, $Z^{N}$ and $Z^C$ such that
\begin{equation}\label{EQN:NullQC1}
\langle Z,Z^C\rangle_t = \langle Z,Z^{N}\rangle_t = \langle Z^{N},Z^C\rangle_t = 0\,,
\end{equation}
being $\langle Z^j,Z^i\rangle_t$ the quadratic covariation between $Z^i$ and $Z^j$ at time $t$. 

Then $X$ admits an equivalent It\^{o} formulation as
\begin{equation}\label{EQN:ItoSPHSKer1}
\begin{cases}
d \varphi(X_t) &= [\varphi,H]_L (X_t)d Z_t + [[\varphi,H]_L,H]_L d \langle Z,Z\rangle_t+\\
&+u [\varphi,H_g]_L(X_t) d Z^g_t + u[[\varphi,H_g]_L,H_g]_L d \langle Z^g,Z^g\rangle_t+\\
&+[\varphi,H_{N}]_L (X_t)d Z^{N}_t + [[\varphi,H_N]_L,H_N]_L d \langle Z^N,Z^N\rangle_t\,,\\
y_t &= [H,H_g]_L\,.
\end{cases}
\end{equation}
\end{Proposition}
\begin{Remark}
It is worth stressing that conditions \eqref{EQN:NullQC1} are not necessary to prove the It\^{o} representation \eqref{EQN:ItoSPHSKer1}. In fact, a similar result holds dropping such conditions and adding cross terms to equation \eqref{EQN:ItoSPHSKer1}. The choice of assuming conditions \eqref{EQN:NullQC1} has been purely made to avoid heavy notation.

\demo
\end{Remark}
\begin{proof}
Consider a second order vector $L_{\ddot{v}}\in \RR^{m}$, so that we have
\begin{equation}\label{EQN:ConvItoSPHS1}
\begin{split}
s(x,z)(L_{\ddot{z}})[\varphi] &= \left . \frac{d}{dt} \right |_{t=0} \langle \dd \varphi(x(t)), \dot{x}(t) \rangle =  \left . \frac{d}{dt} \right |_{t=0} \langle \dd \varphi(x(t)), e(x(t),z(t)) \dot{z}(t) \rangle = \\
&=\left . \frac{d}{dt} \right |_{t=0} \dot{z}\langle \dd \varphi(x(t)), \mathrm{X}^L_H(x(t))\rangle +\\
&+\left . \frac{d}{dt} \right |_{t=0} \dot{z}^{g} \langle \dd \varphi(x(t)), \mathrm{X}^L_{H_g}(x(t))u_t  \rangle +\\
&+ \left . \frac{d}{dt} \right |_{t=0} \dot{z}^{N} \langle  \dd \varphi(x), \mathrm{X}^L_{H_N}(x(t))\rangle\,.
\end{split}
\end{equation}

Let focus for the moment only on the first term in the right hand side of the equation. We have
\begin{equation}\label{EQN:ConvItoSPHSFirts1}
\begin{split}
&\left . \frac{d}{dt} \right |_{t=0}  \dot{z} \langle \dd \varphi(x(t)),\mathrm{X}^L_H(x(t)) \rangle  = \ddot{z}(0) \langle \dd \varphi(x(t)), \mathrm{X}^L_H(x(t)) \rangle  +\dot{z}^(0) \langle \dd \langle \dd \varphi(x(t)), \mathrm{X}^L_H(x(t)) \rangle ,\dot{x} \rangle =\\
&=\ddot{z}(0) \langle \dd \varphi(x(t)), \mathrm{X}^L_H(x(t)) \rangle  +\dot{z}(0) \langle \dd  \langle \dd \varphi(x(t)),\mathrm{X}^L_H(x(t))\rangle ,e(x(t),z(t))\dot{z}(t) \rangle =\\
&=\ddot{z}(0) \langle \dd \varphi(x(t)),\mathrm{X}^L_H(x(t)) \rangle +\dot{z}(0) \dot{z}(0)  \dd \langle \dd \varphi(x(t)),\mathrm{X}^L_H(x(t))\rangle ,\mathrm{X}^L_H(x(t)) \rangle =\\
&= \left \langle \langle \dd \varphi(x(t)),\mathrm{X}^L_H(x(t)) L_{\ddot{z}} +  \langle \langle \dd \langle \dd \varphi(x(t)), \mathrm{X}^L_H(x(t)) \rangle,\mathrm{X}^L_H(x(t)) \rangle , L_{\ddot{z}}\right \rangle \,.
\end{split}
\end{equation}
Using now the fact that
\[
\langle \dd \varphi(x(t)), \mathrm{X}^L_H(x(t))\rangle = [\varphi,H]_L(x(t))\,,
\]
it follows from equation \eqref{EQN:ConvItoSPHSFirts1} that
\begin{equation}\label{EQN:ConvItoSPHSFirts12}
\begin{split}
&\left . \frac{d}{dt} \right |_{t=0}  \dot{z} \langle \dd \varphi(x(t)),\mathrm{X}^L_H(x(t)) \rangle  = \\
&=\left \langle [\varphi,H]_L (x(t) + [[\varphi,H]_L,H]_L (x(t)) , L_{\ddot{z}}\right \rangle\,.
\end{split}
\end{equation}

Similar computation holds for both the second and the third term in equation \eqref{EQN:ConvItoSPHS1}. Hence, evaluating  $s^*(x,z)(\dd_2 \varphi)$, for a given function $\varphi \in C^\infty(\XX)$, and exploiting equations \eqref{EQN:ConvItoSPHS1}--\eqref{EQN:ConvItoSPHSFirts1}--\eqref{EQN:ConvItoSPHSFirts12} together with condition \eqref{EQN:NullQC1}, we have
\begin{equation}\label{EQN:ItoOP}
\begin{split}
\left \langle s^*(x,z)(\dd_2 \varphi(x)),L_{\ddot{z}}\right \rangle &= \left \langle(\dd_2 \varphi(x)), s(x,z)L_{\ddot{z}}\right \rangle = s(x,z)(L_{\ddot{z}})[\varphi] = \\
&= \left \langle [\varphi,H]_L (x(t) + [[\varphi,H]_L,H]_L (x(t)) , L_{\ddot{z}}\right \rangle+\\
&+ \left \langle [\varphi,H_g]_L (x(t) u_t + [[\varphi,H_g]_Lu_t,H_g]_Lu_t (x(t)) , L_{\ddot{z}}\right \rangle+\\
&+\left \langle [\varphi,H_N]_L (x(t) + [[\varphi,H_N]_L,H_N]_L (x(t)) , L_{\ddot{z}}\right \rangle\,.
\end{split}
\end{equation}

Therefore we obtain, for any $\varphi \in C^\infty(\XX)$
\begin{equation}\label{EQN:FinalITOSPHS}
\begin{split}
d\varphi(X_t) &= \langle \dd_2 \varphi,dX_t\rangle = \left \langle s^*(X_t,\mathbf{Z}_t)(\dd_2 \varphi),d\mathbf{Z}_t \right \rangle = \\
&= [\varphi,H]_L (X_t)d Z_t + [[\varphi,H]_L,H]_L d \langle Z,Z\rangle_t+\\
&+u [\varphi,H_g]_L(X_t) d Z^g_t + u[[\varphi,H_g]_L,H_g]_L d \langle Z^g,Z^g\rangle_t+\\
&+[\varphi,H_{N}]_L (X_t)d Z^{N}_t + [[\varphi,H_N]_L,H_N]_L d \langle Z^N,Z^N\rangle_t\,,\\
\end{split}
\end{equation}
and the claim follows.
\end{proof}

\color{black}
\section{Implicit port-Hamiltonian systems}\label{SEC:IPHS}

\subsection{Implicit deterministic port-Hamiltonian systems}\label{SEC:ISPHS}

\fc{ Having provided, in Section \ref{SEC:DESPHS},} a geometric formulation of PHS, we can generalize the \fc{given definition of PHS} to introduce the notion of implicit PHS, \cite{Sec,VdSPH}. As briefly mentioned, the definition of implicit PHS is based on the notion of \textit{Dirac structure}, \fc{hence we first need} to introduce some fundamental concepts, \cite{VdSPH}.

Let $\mathcal{F}$ be a general finite dimensional linear space and $\mathcal{E}:= \mathcal{F}^*$ be its dual. The product space $\mathcal{E}\times \mathcal{F}$ is the space of power variables
\[
P := \langle e,f\rangle\,,\quad (f,e)\in \mathcal{F}\times \mathcal{E}\,;
\]
where $\langle e,f\rangle$ denotes the duality product, \fc{while} $\mathcal{F}$ is usually referred to as the space of \textit{flows} $f$, whereas $\mathcal{E}$ is the space of \textit{efforts} $e$. We can also introduce the following bilinear symmetric form
\[
\langle\langle(e_1,f_1),(e_2,f_2)\rangle\rangle := \langle e_1,f_2\rangle + \langle e_2,f_1\rangle = e_1^Tf_2 + e_2^Tf_1\,.
\]

In what follows, given a linear subspace $\mathcal{S}\subset \mathcal{E} \times \mathcal{F}$, we will define the \textit{orthogonal complement} $\mathcal{S}^{\perp}$ to be
\[
\mathcal{S}^\perp := \left \{(e,f) \in  \mathcal{E} \times \mathcal{F} \, : \, \langle\langle(e,f),(\tilde{e},\tilde{f})\rangle\rangle = 0 \,,\quad  \forall \, (\tilde{e},\tilde{f}) \in  \mathcal{E} \times \mathcal{F} \right \}\,.
\]

Therefore, we may describe a physical system as the interconnection of \textit{storage elements} $(f_S,e_S) \in  \mathcal{F}_S\times \mathcal{E}_S$, of \textit{resistive elements} $(f_R,e_R) \in  \mathcal{F}_R\times \mathcal{E}_R$ and the \textit{environment} or the \textit{control system} $(f_C,e_C) \in  \mathcal{F}_C\times \mathcal{E}_C$. In this particular case the general space of \textit{flows} is given by $\mathcal{F}:=\mathcal{F}_S \times \mathcal{F}_R \times \mathcal{F}_C$ and the space of \textit{efforts} $\mathcal{E}:=\mathcal{E}_S \times \mathcal{E}_R \times \mathcal{E}_C$.
\fc{The latter allows us to} 
introduce the notion of \textit{separable Dirac structure}, \cite[Ch. 6]{VdSBook}.

\begin{Definition}\label{DEF:CDS}
A (constant) \textit{separable Dirac structure} $\mathcal{D}$ on $\mathcal{F}$ is a linear subspace $\mathcal{D} \subset \mathcal{F} \times \mathcal{E}$ such that $\mathcal{D}=\mathcal{D}^\perp$.
\end{Definition}

Consider for the moment the particular case where the relation between resistive elements can be written in input--output form, i.e., there exists a map $F:\RR^{n_R} \to \RR^{n_R}$ such that
\begin{equation}\label{EQN:IORes}
f_R = -F(e_R)\,,\quad e^T_R F(e_R) \geq 0\,.
\end{equation}

Also, the interconnection of the energy storing elements to the storage port of the \textit{Dirac structure} is obtained setting
\begin{equation}\label{EQN:Store}
f_S = -\dot{x} \,,\quad e_S = \frac{\partial}{\partial x}H(x)\,,
\end{equation}
so that we obtain the following definition for the \textit{implicit PHS}.

\begin{Definition}[Implicit port-Hamiltonian system]\label{DEF:IPHS}
Let $\mathcal{F}$ be the space of flows and $\mathcal{E}$ its dual; let $H:\XX \to \RR$ be the \textit{Hamiltonian function} representing the energy of the system, with $\mathcal{D}$ a \textit{Dirac structure}. Then an \textit{implicit port-Hamiltonian system} is given by
\[
\left (-\dot{x}, \frac{\partial}{\partial x}H(x),-F(e_R),e_R,f_C,e_C\right ) \in \mathcal{D}\,.
\]
\end{Definition}

\subsubsection{Implicit deterministic port-Hamiltonian systems on manifolds}\label{SEC:ImplNC}

In this section we consider PHS with non--constant geometry. In order to achieve such a generalization we will consider \textit{Dirac structure} on differentiable manifolds.

Given a $n-$dimensional manifold $\XX$ with tangent bundle $\TT \XX$ and cotangent bundle $\TT^* \XX$ we will define $\TT \XX \oplus \TT^* \XX$ to be the smooth vector bundle over $\XX$ with fiber at $x \in \XX$ given by $\TT_x \XX \times \TT^*_x \XX$. We will say that $(X,\theta)$ belongs to a smooth vector subbundle $\mathcal{D} \subset \TT \XX \oplus \TT^* \XX$ if $(X(x),\theta(x)) \in \mathcal{D}(x)$, $\forall \, x \in \XX$, \fc{thereafter using} the shorthand notation $(X,\theta)\in \mathcal{D}$.

We can also introduce the \textit{orthogonal complement} w.r.t. the standard pairing between forms and vector fields as
\[
\mathcal{D}^\perp = \left \{(X,\theta) \, : \, \langle \theta,\bar{X}\rangle + \langle \bar{\theta},X\rangle=0\,, \, \forall \, (\bar{X},\bar{\theta}) \in \mathcal{D} \right \}\,,
\]
where $\langle \cdot, \cdot \rangle$ denotes the standard duality pairing between forms and vector fields as defined in equation \eqref{EQN:PairVFF1}.

Therefore we have the following definition, which generalizes Definition \ref{DEF:CDS}, \cite[Definition 2.1]{Dal}.
\begin{Definition}[Generalized Dirac structure]\label{DEF:GDS}
A generalized \textit{Dirac structure} $\mathcal{D}$ on a smooth manifold $\XX$ is a smooth vector subbundle $\mathcal{D} \subset \TT \XX \oplus \TT^* \XX$ such that $\mathcal{D}=\mathcal{D}^\perp$.
\end{Definition}

Definition \ref{DEF:GDS} implies that a generalized \textit{Dirac structure} $\mathcal{D}$ on a smooth manifold $\XX$ is a smooth vector subbundle $\mathcal{D} \subset \TT \XX \oplus \TT^* \XX$ such that $\mathcal{D}(x) \subset \TT^*_x \XX \times \TT_x \XX$ is a constant Dirac structure, in the sense of Definition \ref{DEF:CDS}, for every $x \in \XX$, see \cite[Sec. 3]{VdSPH}.

Notice that, taking $\bar{X}=X$ and $\bar{\theta}=\theta$ we immediately obtain that
\[
\langle \theta,X\rangle = 0 \,,\quad \forall \, (X,\theta) \in \mathcal{D}\,.
\]

We thus can introduce the following definition of \textit{implicit port-Hamiltonian system}, with general space of flows $\mathcal{F}$ and efforts $\mathcal{E}$.

\begin{Definition}[Implicit generalized port-Hamiltonian system]\label{DEF:IGPHS}
Let $\mathcal{F}$ be the space of flows and $\XX$ be a smooth $n$--dimensional manifold $\mathcal{X}$, $H:\mathcal{X}\to \RR$ be a Hamiltonian function  and $\mathcal{D}$ be a \textit{Dirac structure}. The \textit{implicit generalized port-Hamiltonian system} $(\XX,\mathcal{F},\mathcal{D},H)$ is defined by
\[
\left (-\dot{x},\frac{\partial H}{\partial x}(x),f,e\right ) \in \mathcal{D}(x)\,.
\] 
\end{Definition}

It can thus being shown that the explicit PHS with dissipation \eqref{EQN:EPHSDc} is a PHS as defined in Definition \ref{DEF:IGPHS}.

\begin{Proposition}\label{PRO:IsoPHS}
Consider $\mathcal{D}$ defined as
\[
\left (-X,\theta,f^R,e^R,f^C,e^C\right ) \in \mathcal{D}\,,
\]
if and only if
\begin{equation}\label{EQN:CondDSdet}
\begin{cases}
&X(x) = \left (J(x) - R(x)\right ) \theta + g(x) f^C \,,\\
&e^C = g^T(x)\theta \,,\\
\end{cases} 
\end{equation}
such that $J=-J^T$ and $R \succeq 0$, then $\mathcal{D}$ is a Dirac structure.
\end{Proposition}
\begin{proof}
Let $\left (-X,\theta,f^R,e^R,f^C,e^C\right ) \in \mathcal{D}^\perp$ we have that
\[
\begin{split}
&-\langle \bar{\theta},X\rangle - \langle \theta, \bar{X}\rangle + \langle \bar{e}^R,f^R\rangle + \langle e^R, \bar{f}^R\rangle + \langle \bar{e}^C,f^C\rangle + \langle e^C, \bar{f}^C\rangle  = 0\,, 
\end{split}
\]
for any $\left (-\bar{X},\bar{\theta},\bar{f}^R,\bar{e}^R,\bar{f}^C,\bar{e}^C\right )$ satisfying \eqref{EQN:CondDSdet}.

Choosing $\bar{f}^C = \bar{f}^R=0$, and setting
\begin{equation}\label{EQN:CondDSAdd}
\begin{cases}
e^R &= \theta\,,\\
f^R &= R(x) e^R\,,\\
f^C &= u \,\\
\end{cases} 
\end{equation}
we have that, $\forall\,\bar{\theta}$, it holds
\begin{equation}\label{EQN:IOSys12}
\begin{split}
&-\langle \bar{\theta}, X\rangle - \langle \theta,J(x)\bar{\theta} \rangle + \langle \theta,R(x) e^R\rangle + \langle  g^T(x)\theta,u \rangle  = 0\,. 
\end{split}
\end{equation}

Thus, it immediately follows, with $\theta = \partial_x H(X_t)$ and $X = \dot{x}$,
\begin{equation}\label{EQN:IOSys1}
\dot{x} = \left (J(x)-R(x)\right )\partial_x H(x) + g u \,,
\end{equation}
and inserting equation \eqref{EQN:IOSys1} into equation \eqref{EQN:IOSys12} we obtain 
\[
e^C = g^T(x)\partial_x H(x)\,,\\
\]
\fc{so that} $\left (-\dot{x},\dd H,f^R,e^R, f^C,e^C\right ) \in \mathcal{D}$.
\end{proof}

Proposition \ref{PRO:IsoPHS} motivates the following definition.

\begin{Definition}[Input--state--output port-Hamiltonian system]\label{DEF:EPHS}
Let $\XX$ be a smooth $n$--dimensional manifold, and $H:\mathcal{X}\to \RR$ be a Hamiltonian function, then
\begin{equation}\label{EQN:EPHS2}
\begin{cases}
\dot{x}=\left [J(x)-R(x)\right ]\Par{H(x)}+g(x)u\,,\\
y=g^T(x)\Par{H(x)}\,,
\end{cases}
\end{equation}
with $J(x) = -J^T(x)$ and $R(x) = R^T(x) \succeq 0$, is called \textit{input--state--output port--Hamiltonian system}.
\end{Definition}

Notice that
\begin{equation}\label{EQN:Pass}
\frac{d}{dt}H = - \Par{{}^TH(x)}R(x)\Par{H(x)}+y^Tu\leq y^Tu\,,
\end{equation}
or equivalently in integral form
\begin{equation}\label{EQN:PassInt}
H(x(t))- H(x(0)) = - \int_0^t \left (\Par{{}^TH(x)}R(x)\Par{H(x)}+y^Tu \right ) ds\leq \int_0^t y^Tu ds\,.
\end{equation}

This equation states that the internal energy of the system is always less or equal to the external energy supplied to the system.
\fc{In particular, equation \eqref{EQN:Pass} expresses what in literature is known as \textit{passivity property} of PHS, see, e.g., \cite{VdSBook}.}

\subsection{Implicit stochastic port-Hamiltonian systems}\label{SEC:ISPHS}

The main goal of the present section is to formally introduce the definition of \textit{implicit stochastic port-Hamiltonian system} (SPHS). \fc{We would like to underline that, to the best of our knowledge, no formulation of implicit SPHS has been provided in literature so far}. We will show that our definition generalizes already existing definitions of explicit input--state--output SPHS as introduced in Section \ref{SEC:PHS}, \fc{see} \cite{Had,Sat1,Sat2,Sat3,Sat4,CDPM_Bil,CDPM_IFAC,CDPMTank,CDPM_Weak}, as well as stochastic dynamics on Poisson manifolds, see \cite{Ort}.

As done in Section \ref{SEC:PHS}, we will first introduce the general notion of \textit{explicit stochastic port-Hamiltonian system} as a controlled Hamiltonian system on Poisson or Leibniz manifold. Then, inspired by the general theory of explicit SPHS on manifolds, we will generalize the theory to define \textit{implicit stochastic port-Hamiltonian system}.

This section is devoted to generalize to the stochastic setting the definition of \textit{implicit port-Hamiltonian system} of Section \ref{SEC:ImplNC}. We assume that the flow corresponding to each port is a semimartingale, so that the noise can enter into the system not only through a stochastic external random field but also as a random perturbation of any port connected to the system.

\color{black}
We use \textit{Stratonovich calculus} since it allows us to exploit standard rules of differential calculus and exterior calculus on manifold. In what follows, we will consider $X:I \to \XX$ to be an integral curve of a \textit{Stratonovich vector field} $\delta X_t$ with initial condition $X_0$, being $I \subset \RR_+$. We recall that, \cite{EmeBook}, for any differential 1-form $\theta$ on $\XX$ we can associate in a unique way the real-valued semimartingale that represents the integration of $\theta$ along the vector field $\delta X$, denoted as
\begin{equation}\label{EQN:PairVFF}
\int_0^t \langle \theta, \delta X_s\rangle\,.
\end{equation}
The integral \eqref{EQN:PairVFF} is called \textit{Stratonovich integral} of $\theta$ along $\delta X$.
\color{black}

\color{black}

We introduce the \textit{orthogonal complement} of a bundle $\mathcal{D}\subset \TT \XX \oplus \TT^* \XX$ w.r.t. the above introduced pairing between forms and vector fields as
\begin{equation}\label{EQN:OrtCom}
\begin{split}
\mathcal{D}^\perp &= \left \{ (\delta X_t,\theta)\subset \TT \XX \oplus \TT^* \XX \, : \right . \,\\
&\left . \qquad \qquad \, \int_0^t \langle \theta,\delta \bar{X}_s\rangle + \int_0^t \langle \bar{\theta},\delta X_s \rangle=0\,,  \forall \, (\delta \bar{X}_t,\bar{\theta}) \in \mathcal{D},\,t \in I \right \}\,.
\end{split}
\end{equation}

The following definition, generalizes Definition \ref{DEF:GDS}.

\begin{Definition}[Generalized stochastic Dirac structure]\label{DEF:SGDS}
A \textit{generalized stochastic Dirac structure} $\mathcal{D}$ on a manifold $\XX$ is a smooth vector subbundle $\mathcal{D} \subset \TT \XX \oplus \TT^* \XX$ such that $\mathcal{D}=\mathcal{D}^\perp$.
\end{Definition}

Notice that taking $\delta \bar{X}=\delta X$ and $\bar{\theta}=\theta$ we immediately obtain that
\begin{equation}\label{EQN:EnCons}
\int_0^t \langle \theta,\delta X_s\rangle = 0 \,,\quad \forall \, (\delta X_t,\theta) \in \mathcal{D} \,,\quad \forall t \in I \,.
\end{equation}

The following generalizes Definition \ref{DEF:IGPHS}.

\begin{Definition}[Implicit generalized stochastic port-Hamiltonian system]\label{DEF:IGSPHS}
Let $\XX$ be an $n$-dimensional manifold $\XX$ with \textit{generalized Dirac structure} $\mathcal{D}$, $H:\XX \to \RR$ the \textit{Hamiltonian function} perturbed by the semimartingale $Z$. An \textit{implicit generalized stochastic port-Hamiltonian system} $(\XX,Z,\mathcal{D},H)$ on $\XX$ is given by
\[
\left (\delta X_t,\dd H (X_t) \right ) \in \mathcal{D}(X_t)\,, \quad \forall \, t \in I\,.
\]
\end{Definition}

\fc{Next examples highlight how this definition includes main cases considered in the deterministic setting.}

\begin{Example}\label{EX:IO}
\begin{description}[style=unboxed,leftmargin=0cm]
\item[(i)] Let $\left (\XX,B\right )$ be a \textit{Poisson manifold}, with $B^{\#}: \TT^* \XX \to \TT \XX$ the \textit{Poisson morphism} introduced in Section \ref{SEC:PHS}, then
\[
\mathcal{D}_B = \left \{(\delta X,\theta )\,:\, \delta X(x)= B^{\#}\theta (x) \delta Z ,\,\theta \in T^*\XX\right \}\,,
\]
defines a Dirac structure. In particular, the defined Dirac structure leads to the \textit{Hamilton equations}
\[
\delta X_t = B^{\#}(\dd H)(X_t)\delta Z_t\,,
\]
or, equivalently, in integral form
\[
X_t =X_0+\int_0^t B^{\#}(\dd H)(X_s)\delta Z_s\,;
\]

\item[(ii)] Let $\left (\XX,\omega\right )$ be a symplectic manifold, that is $\omega$ is a closed (possibly degenerate) two--form, with $\omega^{\#} : \TT \XX \to \TT^* \XX$ the canonical musical isomorphism, then 
\[
\mathcal{D}_\omega = \left \{(\delta X,\theta)\,:\, \theta \delta Z = \omega^{\#}\left ( \delta X\right ) ,\, \delta X \in T\XX\right \}\,,
\]
is a Dirac structure.
\end{description}
\demo 
\end{Example}

Since Definition \ref{DEF:IGSPHS} is based on \fc{both} \textit{Stratonovich calculus} and \textit{exterior calculus}, it allows us to obtain the remarkable \textit{energy conservation property}, which is one of the founding aspects of port-Hamiltonian systems. In particular, we have that
\begin{equation}\label{EQN:EnConsFin}
 H(X_t)-H(X_0) = \int_0^t \langle \dd H,\delta X_s\rangle \,,
\end{equation}
or in short hand notation
\[
\delta H(X_t) =\langle \dd H ,\delta X_t\rangle\,.
\]

We can thus introduce the port variables associated with internal storage $(\delta f^S_t,e^S_t)$, \fc{interconnecting} the energy storing elements to the storage port of the Dirac structure by setting
\[
\delta f_t^S = -\delta X_t\,,\quad e_t^S=\dd H\,.
\]

The energy balance reads
\begin{equation}\label{EQN:EnBalance1}
H(X_t)-H(X_0) = \int_0^t \langle \dd H,\delta X_s\rangle = - \int_0^t \langle e_s^S,\delta f_s^S\rangle \,,
\end{equation}
and the total energy is preserved along solutions of the Hamiltonian system. We remark that
the particular notation $\delta f^S$ \fc{emphasizes} the fact that the flow of the storage port is a \textit{Stratonovich vector field} over $\XX$. 
\color{black}
The latter, implies one of the major novelty of the proposed approach. Since the flow variable $\delta X$ is a stochastic \textit{Stratonovich vector field}, the power $P_t$ exchange through the port
\[
P_t := \int_0^t \langle \dd H,\delta X_s\rangle\,,
\]
is a real-values semimartingale. Therefore, as previously mentioned, in the considered setting each port element can be intrinsically stochastic.

\color{black}

\begin{Remark}\label{REM:LieDer}
It is worth remarking that the Definition \ref{DEF:SGDS} of \textit{Dirac structure} has been called generalized to differentiate it to the original definition in \cite{Cou} on \textit{Dirac manifold} where a certain closeness assumption has been made. In particular, in later development of the theory, closeness assumptions was dropped, mainly with the aim of including nonholonomic constraints into the definition of Dirac structure. Using the Definition \ref{DEF:SGDS} of generalized Dirac structure we are able in Example \ref{EX:IO} to consider a (pseudo)-Poisson bracket, that is a Poisson bracket that does not satisfy Jacobi identity, and pre--symplectic geometry considering a two--form that is not necessarily closed.

\color{black}
To recover the original definition in \cite{Cou} we can require the closeness of the \textit{Dirac structure} according to the next definition.
\begin{Definition}[(Closed) Dirac structure]\label{DEF:CloDS}
A \textit{generalized Dirac structure} $\mathcal{D}$ on $\XX$ is called (closed) \textit{Dirac structure} if for arbitrary $(\delta X^1_t,\theta_1)$, $(\delta X^2_t,\theta_2)$ and $(\delta X^3_t,\theta_3)$, it holds
\[
\langle \La_{\delta X^1_t} \theta_2,\theta_3\rangle + \langle \La_{\delta X^2_t} \theta_3,\theta_1\rangle+\langle \La_{\delta X^3_t} \theta_1,\theta_2\rangle=0\,,
\]
being $\La_{\delta X_t}\theta $ the \textit{Lie-derivative} of the form $\theta$ along the \textit{Stratonovich vector field} $\delta X_t$.
\end{Definition}

It is worth stressing that, due to the geometric nature of our definitions, the \textit{Lie-derivative} of the form $\theta$ along the \textit{Stratonovich vector field} $\delta X_t$ can be defined through the \textit{Cartan magic formula}, \cite{Holm1}, as
\[
\La_{\delta X_t}\theta = \dd (\ii_{\delta X_t} \theta) + \ii_{\delta X_t} \dd \theta\,.
\]
\demo \end{Remark}

\color{black}

\subsubsection{Interconnection of the Dirac structure with other ports}\label{SEC:OP}

PHS's are mainly seen as interconnection of different port elements, possibly representing different physical systems. In the present section we will introduce the general formalism needed to connect several ports through a stochastic Dirac structure. The main idea follows what \fc{previously provided} introducing the stochastic implicit PHS in Definition \ref{DEF:IGSPHS}, and resembles how one can formally define distributed parameter PHS's, \cite{VdSC2}. In order to be able to incorporate stochasticity into the implicit stochastic PHS we will consider particular choice for effort and flow spaces.

In what follows, we will consider the flow space $\mathcal{F}_{Z^\alpha}:= \mathfrak{X}_{Z^\alpha} (\XX)$ to be the space of \textit{Stratonovich vector fields} on $\XX$ perturbed by a general semimartingale $Z^\alpha$\fc{. As to} emphasize that any flow element is in fact a \textit{Stratonovich vector field}, we will denote any element belonging to $\mathcal{F}_{Z^\alpha}$ as $\delta f^\alpha$. Similarly, we will consider the space of efforts to be the dual of the space of flows, so that $\mathcal{E}:= \Omega^1(\XX)$ is the space of $1-$forms on $\XX$. As already discussed above, to any element $(e,\delta f) \in \mathcal{E}\times \mathcal{F}_{Z^\alpha}$ we can associate a natural pairing, \cite{Holm1}. We stress that in general we can consider flow variables, resp. effort variables, to take values in the set of Stratonovich vector fields $\mathfrak{X}(\mathcal{N})$, resp. 1-forms $\Omega(\mathcal{N})$, over a different manifold $\mathcal{N}$.

\fc{Let us underline that in the implicit SPHS, it is possible to consider other ports besides energy storage ones}, such as \textit{resistive ports} $(R)$ and \textit{control ports} $(C)$. In the following we will thus denote by $\mathcal{F}_{Z^\alpha}$ the space of \textit{Stratonovich vector fields} on $\XX$ generated by a semimartingale $Z^\alpha$, $\alpha = R$, $C$, and by $\mathcal{E}_\alpha$, $\alpha=R$, $C$, be the space of 1-form on $\XX$.

\begin{Remark}
In the general implicit form, there is no need to specify the perturbing semimartingale $Z^\alpha$ for the port $\alpha$: since $\delta f^\alpha$ is a Stratonovich vector field, the whole theory would follow analogously. Nonetheless, we have chosen to specify the perturbing semimartingale also in the implicit form to emphasize the connection to explicit SPHS.

\demo
\end{Remark}

A dissipation effect can be further taken into account by terminating the resistive port with a dissipation element satisfying an energy--dissipating relation $\mathcal{R}$. In general such a relation is defined as a subset
\[
\mathcal{R} \subset \mathcal{F}_{Z_R} \times \mathcal{E}_R\,,
\]
such that it holds
\begin{equation}\label{EQN:DissRPO}
\int_0^t \langle e^R_s,\delta f^R_s \rangle \leq 0\,, \quad t \in I\,.
\end{equation}
\fc{A relevant case is the one when}  the resistive relation can be expressed as the graph of an input--output map, so that, given a map $\delta \tilde{R}:\Omega^1(\XX) \to \mathfrak{X}_{Z^R}(\XX)$, we require
\begin{equation}\label{EQN:DissRP2}
\int_0^t \langle e^R_s,\delta \tilde{R}(e^R_s) \rangle \geq 0\,,
\end{equation}
and we can impose the following connection
\[
\delta f^R_s := - \delta \tilde{R}(e^R_s)\,.
\]

\begin{Definition}[Implicit generalized stochastic port-Hamiltonian system]\label{DEF:ImpSPHS}
Let $\XX$ be an $n$--dimensional manifold $\mathcal{X}$, $\mathbf{Z} = (Z,Z^R,Z^C)$ be a semimartingale, $H:\mathcal{X}\to \RR$ be a Hamiltonian function and $\mathcal{D}$ be a \textit{generalized stochastic Dirac structure}. Let also $\mathcal{F}:= \mathcal{F}_{Z_R}\times \mathcal{F}_{Z_C}$ be the space of flows $\delta f$ and $\mathcal{E}=\mathcal{F}^*$ be the corresponding dual space of efforts. The \textit{implicit generalized port-Hamiltonian system} $(\XX,\mathbf{Z},\mathcal{F},\mathcal{D},H)$, with resistive structure $\mathcal{R}$, is defined by
\[
\left (-\delta X_t,\dd H,\delta f^R_t,e_t^R,\delta f^C_t,e^C_t\right ) \in \mathcal{D}(X_t)\,,
\] 
with the resistive relation
\[
\left (\delta f^R_t,e^R_t\right ) \in \mathcal{R}(X_t)\,.
\]
\end{Definition}

Since the resistive port is required to satisfy the dissipation relation \eqref{EQN:DissRPO}, we obtain the power balance
\[
\begin{split}
H(X_t)-H(X_0) &= \int_0^t \langle \dd H,\delta X_s\rangle = \int_0^t \langle e_s^R,\delta f_s^R\rangle + \int_0^t \langle e_s^C,\delta f_s^C\rangle \leq \\
&\leq  \int_0^t \langle e_s^C,\delta f_s^C\rangle \,.
\end{split}
\]

Notice that the condition for the resistive port \eqref{EQN:DissRPO} is usually too strong to be satisfied in practice, since it requires that energy dissipation occurs along all possible realizations of the system. In order to weaken \fc{it}, we introduce a different formulation of \textit{Dirac structure} \fc{with the weaker resistive relation of requiring that the energy being dissipated in mean value.}

The weak energy--dissipating relation $\mathcal{R}_W$ is defined as a subset
\[
\mathcal{R}_W \subset \mathcal{F}_{Z_R} \times \mathcal{E}_R\,,
\]
such that
\begin{equation}\label{EQN:DissRPW}
\mathbb{E} \int_0^t \langle e^R_s,\delta f^R_s \rangle \leq 0\,.
\end{equation}
Similarly, if there exists a map $\delta \tilde{R}:\Omega^1(\XX) \to \mathfrak{X}(\XX)$, so that
\begin{equation}\label{EQN:DissRP2W}
\mathbb{E} \int_0^t \langle e^R_s,\delta \tilde{R}(e^R_s) \rangle \geq 0\,,
\end{equation}
we can obtain energy dissipation imposing the following interconnection,
\[
\delta f^R_s := - \delta \tilde{R}(e^R_s)\,.
\]

Therefore, in the weak setting the resistive port is required to satisfy a weak dissipation condition of the form \eqref{EQN:DissRPW}, and the mean power balance reads
\[
\begin{split}
\mathbb{E} H(X_t) &= \mathbb{E} H(X_0) + \mathbb{E} \int_0^t \langle \dd H,\delta X_s\rangle =\\
&= \mathbb{E} H(X_0) + \mathbb{E} \int_0^t \langle e_s^R,\delta f_s^R\rangle + \mathbb{E} \int_0^t \langle e_s^C,\delta f_s^C\rangle \leq \\
&\leq \mathbb{E} H(X_0) + \mathbb{E} \int_0^t \langle e_s^C,\delta f_s^C\rangle \,,
\end{split}
\]
implying that energy is required to be preserved and dissipated in mean value. \fc{We stress that we will always consider the} weak relation since it is the most suitable to many applications, nonetheless similar arguments will still hold imposing strong energy dissipation relations.

\subsubsection{The general case}\label{SEC:GenPHS}

In order to generalize \textit{Hamilton equations} \eqref{EQN:ESPHSDiss} we augment the Dirac structure with a new type of port, that we will call \textit{noise port}, with flow space $\mathcal{F}_{Z_{N}}$, the space of \textit{Stratonovich vector fields} perturbed by $Z_{N}$, and effort space $\mathcal{E}_{N}$. \fc{As it will be seen later on, within the explicit formulation, this port will play the role of external random field perturbing the system.}

\begin{Definition}\label{DEF:ImpSPHSsh}
Let $\XX$ be an $n$--dimensional manifold $\mathcal{X}$, $\mathbf{Z} = (Z,Z^R,Z^C,Z^N)$ be a semimartingale, $H:\mathcal{X}\to \RR$ be a Hamiltonian function and $\mathcal{D}$ be a \textit{generalized stochastic Dirac structure}. The \textit{implicit generalized port-Hamiltonian system} $(\XX,\mathbf{Z},\mathcal{F},\mathcal{D},H)$ is defined by
\begin{equation}\label{EQN:ImpSPHSshEQ}
\left (-\delta X_t,\dd H,\delta f^R_t,e^R_t,\delta f^C_t,e_t^C,\delta f^{N}_t,e_t^{N}\right ) \in \mathcal{D}(X_t)\,.
\end{equation}
\end{Definition}

Figure \ref{FIG:Impl} shows a graphical representation of this definition.

We can also introduce the (weak) resistive relation
\[
\left (\delta f^R_t,e^R_t\right ) \in \mathcal{R}_W(X_t)\,,
\]
so that the, weak energy balance reads as
\[
\begin{split}
\mathbb{E} H(X_t) - \mathbb{E} H(X_0) &= \mathbb{E} \int_0^t \langle \dd H,\delta X_s\rangle =\\
&= \mathbb{E} \int_0^t \langle e_s^{N},\delta f_s^{N}\rangle + \mathbb{E} \int_0^t \langle e_s^R,\delta f_s^R\rangle + \mathbb{E} \int_0^t \langle e_s^C,\delta f_s^C\rangle \leq \\
&\leq \mathbb{E} \int_0^t \langle e_s^{N},\delta f_s^{N}\rangle + \mathbb{E} \int_0^t \langle e_s^C,\delta f_s^C\rangle \,,
\end{split}
\]

In many applications, we deal with passive systems, i.e. systems where the total (average) energy in the interval $[0,t]$ must be less or equal to the (average) energy injected into the system, 
\[
\mathbb{E} H(X_t) - \mathbb{E} H(X_0)\leq  \mathbb{E} \int_0^t \langle e_s^C,\delta f_s^C\rangle \,.
\]

In the deterministic case, imposing an energy dissipation relation is sufficient to guarantee the passivity of the PHS, whereas in the present case, in order to guarantee passivity, we are forced to further require the stronger condition that both the \textit{resistive port} and the \textit{noise port} satisfy a dissipativity condition. In particular, we can define an energy--dissipation relation
\[
\mathcal{R}_W^{N} \subset \mathcal{F}_{Z_R} \times  \mathcal{F}_{Z_{N}} \times \mathcal{E}_R \times \mathcal{E}_{N}\,,
\]
such that it holds
\begin{equation}\label{EQN:DissRP}
\mathbb{E}\int_0^t \langle e^R_s,\delta f^R_s \rangle + \mathbb{E} \int_0^t \langle e^{N}_s,\delta f^{N}_s \rangle \leq 0\,.
\end{equation}

Thus, endowing the stochastic PHS \eqref{EQN:ImpSPHSshEQ} with the (weak) energy--dissipation relation $\mathcal{R}_W^{N}$ we obtain the passivity property for the SPHS
\begin{equation}\label{EQN:DoubleDissStP}
\begin{split}
\mathbb{E} H(X_t) - \mathbb{E} H(X_0) &= \mathbb{E} \int_0^t \langle \dd H,\delta X_s\rangle =\\
&= \mathbb{E} \int_0^t \langle e_s^{N},\delta f_s^{N}\rangle + \mathbb{E} \int_0^t \langle e_s^R,\delta f_s^R\rangle + \mathbb{E} \int_0^t \langle e_s^C,\delta f_s^C\rangle \leq \\
&\leq \mathbb{E} \int_0^t \langle e_s^C,\delta f_s^C\rangle \,,
\end{split}
\end{equation}

As above, we can consider the situation where the general resistive relation can be expressed as the graph of an input--output map, so that, given two maps $\delta \tilde{R}:\Omega^1(\XX) \to \mathfrak{X}(\XX)$ and $\delta \tilde{R}^{N}:\Omega^1(\XX) \to \mathfrak{X}(\XX)$ we require
\begin{equation}\label{EQN:DoubleDiss}
\mathbb{E}\int_0^t \langle e^R_s,\delta \tilde{R}(e^R_s) \rangle + \mathbb{E}\int_0^t \langle e^{N}_s,\delta \tilde{R}^{N}(e^{SH}_s) \rangle \geq 0\,.
\end{equation}
By imposing the connection
\[
\begin{split}
\delta f^R_s &:= - \delta \tilde{R}(e^R_s)\,,\\
\delta f^{N}_s &:= - \delta \tilde{R}^{N}(e^{N}_s)\,,\\
\end{split}
\]
we would thus obtain the (weak) passive relation \eqref{EQN:DoubleDissStP}.

\begin{Remark}
In equation \eqref{EQN:DoubleDiss} the joint dissipativity condition for both resistive and stochastic ports is more general than requiring that dissipativity holds for both ports separately. In fact, many concrete applications satisfy a dissipativity condition for the resistive port, at least in the weak setting. Nonetheless, it is much harder to find applications where also the stochastic port does satisfy a similar dissipativity condition, even if required to hold just in weak form. Nonetheless equation \eqref{EQN:DoubleDiss} is more general since the dissipativity of the resistive port can "absorb" non--passive behaviours at the stochastic port so that the whole system remains passive; a similar reasoning has been used in \cite{CDPMTank} to define stochastic energy tanks in a bilateral teleoperation setting.

\demo
\end{Remark}

\begin{figure}
\centering
\resizebox {0.7\textwidth} {!} {
\begin{tikzpicture}[
  font=\sffamily,
  every matrix/.style={ampersand replacement=\&,column sep=2cm,row sep=2cm},
  source/.style={draw,thick,circle,fill=yellow!15,inner sep=.3cm ,minimum size=40mm},
  process/.style={draw,thick,circle,fill=blue!15,minimum  size=40mm},
  sink/.style={source,fill=green!20},
  datastore/.style={draw,very thick,shape=datastore,inner sep=.3cm},
  dots/.style={gray,scale=2},
  to/.style={->,>=stealth',shorten >=1pt,semithick,font=\sffamily\footnotesize},
  every node/.style={align=center}]

  \matrix{\& \node[process] (r) {Resistive \\ element \\ (R)}; \& \\

\node[process] (s) {Energy--storing \\ element \\(S)};   \& \node[source] (d) {Power--conserving \\ interconnection\\\\ $\Scale[2]{\mathcal{D}}$}; \&
 \node[process] (hs) {External element --\\ Noise\\(N)};\\

      \& \node[process] (c) {External element --\\ control\\(C)};
      \& ; \\
  };
  
  \draw[transform canvas={xshift=-7pt}] (r) -- node[midway,left] {\scalebox{1}{}} (d);
  \draw[transform canvas={xshift=7pt}] (r) -- node[midway,right] {\scalebox{1}{ports}} (d);
  \draw[transform canvas={yshift=-7pt}] (s) -- node[midway,below] {\scalebox{1}{}} (d);
  \draw[transform canvas={yshift=7pt}] (s) -- node[midway,above] {\scalebox{1}{ports}} (d);
  \draw[transform canvas={xshift=7pt}] (c) -- node[midway,right] {\scalebox{1}{}} (d);
  \draw[transform canvas={xshift=-7pt}] (c) -- node[midway,left] {\scalebox{1}{ports}} (d);
  \draw[transform canvas={yshift=-7pt}] (hs) -- node[midway,below] {\scalebox{1}{}} (d);
  \draw[transform canvas={yshift=7pt}] (hs) -- node[midway,above] {\scalebox{1}{ports}} (d);
\end{tikzpicture}
}
\caption{Implicit port-Hamiltonian system with dissipation, external control and interaction Hamiltonian.}\label{FIG:Impl}
\end{figure}
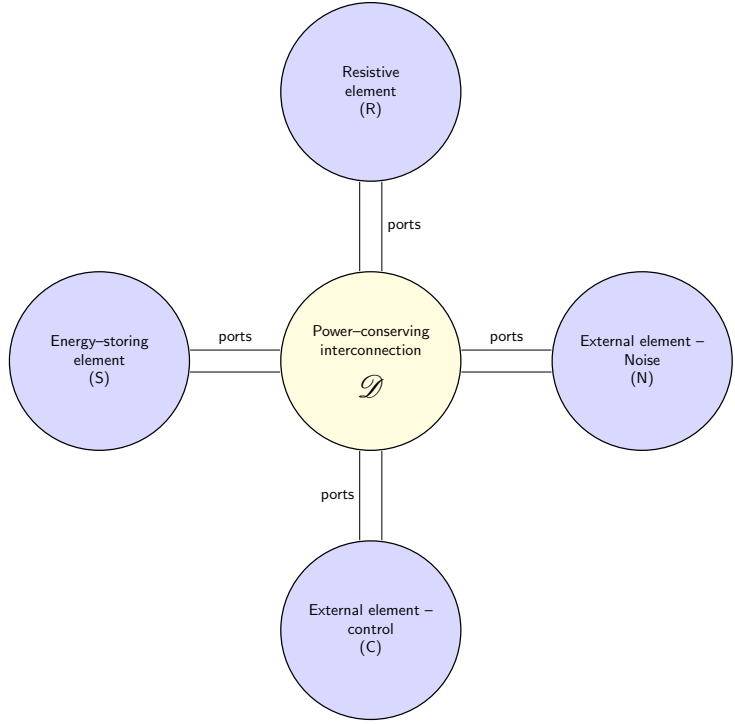

The next proposition gives an alternative representation for the Dirac structure.

\begin{Proposition}\label{PRO:KernRepSDS1}
Let $\mathcal{F}:=\mathcal{F}_{Z_R}\times \mathcal{F}_{Z_C}\times \mathcal{F}_{Z_{N}}$ be the space of flows $\delta f$ and $\mathcal{E}=\mathcal{F}^*$ be the corresponding dual space of efforts $e$, set
\begin{equation}\label{EQN:DSExpl}
\begin{split}
\mathcal{D} &:= \{ (\delta f^S_t, \delta f^R_t,\delta f^C_t,\delta f^{N}_t,e^S_t,e^R_t,e^C_t,e^{N}_t) \in \mathcal{F}\times \mathcal{E} \, : \\
& \qquad \, \,\delta f_t^S = -J e^S_t \delta Z_t - G_R \delta f^R_t - G_C \delta f^C_t - G_{N} \delta f^{N}_t\,,\\
&\qquad \,\, e^R_t = G^*_R e^S_t \,,\quad e^C_t = G^*_C e^S_t\,,\quad e^{N}_t = G^*_{N} e^S_t \}\,,
\end{split}
\end{equation}
where $G_\theta : \mathcal{F}_{Z_\theta} \to \mathcal{F}_{Z_\theta}$, $\theta =R$, $C$, $N$, such that 
\[
\langle e^S_t,G_\theta \delta f^\theta_t\rangle = \langle G_\theta^* e^S_t,\delta f^\theta_t\rangle \,, 
\]
and $J$ such that $J=-J^T$; then $\mathcal{D}$ is a Dirac structure.
\end{Proposition}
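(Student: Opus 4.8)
The plan is to establish the two inclusions $\mathcal{D}\subseteq\mathcal{D}^\perp$ and $\mathcal{D}^\perp\subseteq\mathcal{D}$, the orthogonal complement being computed with the pairing of \eqref{EQN:OrtCom} extended to all ports, so that the relevant bilinear form on $\mathcal{F}\times\mathcal{E}$ sends a pair $((\delta f,e),(\delta\bar f,\bar e))$ to $\sum_{\alpha}\big(\int_0^t\langle e^\alpha,\delta\bar f^\alpha_s\rangle+\int_0^t\langle\bar e^\alpha,\delta f^\alpha_s\rangle\big)$, the sum running over $\alpha\in\{S,R,C,N\}$. First I would record that $\mathcal{D}$ in \eqref{EQN:DSExpl} is the image of a smooth bundle morphism assembled from the smooth fields $J$, $G_R$, $G_C$, $G_N$, hence a smooth vector subbundle of the product bundle, and that the defining relations express $\delta f^S$, $e^R$, $e^C$, $e^N$ as bundle-linear functions of the free data $(e^S,\delta f^R,\delta f^C,\delta f^N)$; this parametrization will be reused for the dimension bookkeeping.

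For $\mathcal{D}\subseteq\mathcal{D}^\perp$ I would substitute the relations of \eqref{EQN:DSExpl} directly. By bilinearity of the Stratonovich integral, $\int_0^t\langle e^S,\delta\bar f^S_s\rangle=-\int_0^t\langle e^S,J\bar e^S_s\rangle\,\delta Z_s-\sum_{\theta\in\{R,C,N\}}\int_0^t\langle e^S,G_\theta\delta\bar f^\theta_s\rangle$, and the adjoint property $\langle e^S,G_\theta\delta\bar f^\theta\rangle=\langle G_\theta^*e^S,\delta\bar f^\theta\rangle=\langle e^\theta,\delta\bar f^\theta\rangle$ turns the cross terms into $-\int_0^t\langle e^\theta,\delta\bar f^\theta_s\rangle$, which cancel exactly against the $\theta$-components of the bilinear form. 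The same manipulation applies to $\int_0^t\langle\bar e^S,\delta f^S_s\rangle$, and the two leftover contributions $-\int_0^t\langle e^S,J\bar e^S_s\rangle\,\delta Z_s-\int_0^t\langle\bar e^S,Je^S_s\rangle\,\delta Z_s$ cancel because $\langle\bar e^S,Je^S\rangle=-\langle e^S,J\bar e^S\rangle$ by $J=-J^T$. Hence the bilinear form vanishes on $\mathcal{D}\times\mathcal{D}$.

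For $\mathcal{D}^\perp\subseteq\mathcal{D}$ I would take $(\delta\hat f,\hat e)\in\mathcal{D}^\perp$ and test it against the elements of $\mathcal{D}$ obtained by switching on the free data one block at a time. Choosing $\delta f^R=\delta f^C=\delta f^N=0$ with $e^S$ arbitrary, the orthogonality identity together with the adjoint relations and $J=-J^T$ collapses to $\int_0^t\langle e^S,\;\delta\hat f^S_s+J\hat e^S_s\,\delta Z_s+\sum_\theta G_\theta\delta\hat f^\theta_s\rangle=0$ for every effort $e^S$; since a Stratonovich vector field is determined by its pairings with all $1$-forms (cf. Definition \ref{DEF:SDEStrat}), this yields the first defining relation $\delta\hat f^S=-J\hat e^S\,\delta Z-\sum_\theta G_\theta\delta\hat f^\theta$. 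Choosing instead $e^S=0$ and only one $\delta f^\theta$ nonzero (so that all efforts vanish and $\delta f^S=-G_\theta\delta f^\theta$), orthogonality reduces to $\int_0^t\langle\hat e^\theta-G_\theta^*\hat e^S,\delta f^\theta_s\rangle=0$ for all admissible $\delta f^\theta$, hence $\hat e^\theta=G_\theta^*\hat e^S$ for $\theta=R,C,N$. Thus $(\delta\hat f,\hat e)$ satisfies every constraint of \eqref{EQN:DSExpl}, so $\mathcal{D}^\perp\subseteq\mathcal{D}$, and with the first inclusion $\mathcal{D}=\mathcal{D}^\perp$.

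The point I expect to be the real obstacle is the step "$\int_0^t\langle\omega_s,\delta X_s\rangle=0$ for all test objects forces the argument to vanish": one must argue that in the Stratonovich category the coefficient operator of a driven vector field may be prescribed freely enough for this conclusion, which is in the spirit of the characterization of Stratonovich operators in Definition \ref{DEF:SDEStrat} but deserves an explicit statement. Alternatively, one can bypass this by reducing to the constant fibre via the remark following Definition \ref{DEF:GDS} and closing the reverse inclusion with the classical dimension count: the bilinear form is nondegenerate on $\mathcal{F}(x)\times\mathcal{E}(x)$, so $\dim\mathcal{D}(x)+\dim\mathcal{D}(x)^\perp=2\dim\mathcal{F}(x)$, while the parametrization by $(e^S,\delta f^R,\delta f^C,\delta f^N)$ gives $\dim\mathcal{D}(x)=\dim\mathcal{F}(x)$; together with $\mathcal{D}(x)\subseteq\mathcal{D}(x)^\perp$ this forces equality.
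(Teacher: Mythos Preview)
Your proof is correct and follows essentially the same route as the paper's: both directions are handled by direct substitution of the defining relations, using the adjoint identity for the $G_\theta$ and the skew-symmetry of $J$, and the reverse inclusion is obtained by specializing the free data $(e^S,\delta f^R,\delta f^C,\delta f^N)$ blockwise. The paper treats only the case $G_C=G_N=0$ for brevity and derives the relations in the opposite order (first $e^R=G_R^*e^S$ by setting $\bar e^S=0$, then the flow relation by setting $\delta\bar f^R=0$), but the mechanics are identical. Your explicit flagging of the ``vanishing pairing implies vanishing argument'' step, together with the fibrewise dimension-count alternative, is a genuine improvement in rigor over the paper, which simply invokes ``non-degeneracy'' without further comment.
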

\begin{proof}
Let us first prove $\mathcal{D} \subset \mathcal{D}^\perp$. For the sake of brevity we will prove the case with $G_C = G_{N}=0$, \fc{the general case being analogous}. Let $(\delta f^S_t, \delta f^R_t,e^S_t,e^R_t)$ and $(\delta \bar{f}^S_t, \delta \bar{f}^R_t,\bar{e}^S_t,\bar{e}^R_t) \in \mathcal{D}$ be as defined in equation \eqref{EQN:DSExpl}; since they belong to $\mathcal{D}$ it holds
\begin{equation}\label{EQN:RepI}
\begin{split}
&\int_0^t \langle e^S_s,\delta \bar{f}^S_s\rangle + \int_0^t \langle \bar{e}^S_s,\delta f^S_s\rangle + \int_0^t \langle e^R_s,\delta \bar{f}^R_s\rangle + \int_0^t \langle \bar{e}^R_s,\delta f^R_s\rangle = \\
=&-\int_0^t \langle e^S_s, J\bar{e}^S_s \delta Z_s\rangle - \int_0^t \langle e^S_s, G_R \delta \bar{f}^R_s\rangle - \int_0^t \langle \bar{e}^S_s,J e^S_s \delta Z_s\rangle - \int_0^t \langle \bar{e}^S_s, G_R \delta f^R_s\rangle+\\
+& \int_0^t \langle e^R_s,\delta \bar{f}^R_s\rangle + \int_0^t \langle \bar{e}^R_s,\delta f^R_s\rangle \,.
\end{split}
\end{equation}
Thus we have that 
\[
\int_0^t \langle e^S_s, G_R \delta \bar{f}^R_s\rangle = \int_0^t \langle G_R^* e^S_s, \delta \bar{f}^R_s\rangle = \int_0^t \langle e^R_s, \delta \bar{f}^R_s \rangle \,,
\]
so \eqref{EQN:RepI} becomes, using the skew-symmetry of $J$,
\begin{equation}\label{EQN:RepII}
\begin{split}
&\int_0^t \langle e^S_s,\delta \bar{f}^S_s\rangle + \int_0^t \langle \bar{e}^S_s,\delta f^S_s\rangle + \int_0^t \langle e^R_s,\delta \bar{f}^R_s\rangle + \int_0^t \langle \bar{e}^R_s,\delta f^R_s\rangle = \\
&=-\int_0^t \langle e^S_s, J\bar{e}^S_s \delta Z_s\rangle - \int_0^t \langle \bar{e}^S_s,J e^S_s \delta Z_s\rangle =0 \,,
\end{split}
\end{equation}
and thus $(\delta f^S_t, \delta f^R_t,e^S_t,e^R_t) \in \mathcal{D}^\perp$.

Let us then prove that $\mathcal{D}^\perp \subset \mathcal{D}$; let $(\delta f^S_t, \delta f^R_t,e^S_t,e^R_t) \in \mathcal{D}^\perp$, then for all $(\delta \bar{f}^S_t, \delta \bar{f}^R_t,\bar{e}^S_t,\bar{e}^R_t) \in \mathcal{D}$ it holds
\begin{equation}\label{EQN:RepIConv}
\begin{split}
0=&\int_0^t \langle e^S_s,\delta \bar{f}^S_s\rangle + \int_0^t \langle \bar{e}^S_s,\delta f^S_s\rangle + \int_0^t \langle e^R_s,\delta \bar{f}^R_s\rangle + \int_0^t \langle \bar{e}^R_s,\delta f^R_s\rangle = \\
=&-\int_0^t \langle e^S_s, J\bar{e}^S_s \delta Z_s\rangle - \int_0^t \langle e^S_s, G_R \delta \bar{f}^R_s\rangle+\int_0^t \langle \bar{e}^S_s,\delta f^S_s\rangle+\\
&+ \int_0^t \langle e^R_s,\delta \bar{f}^R_s\rangle + \int_0^t \langle \bar{e}^R_s,\delta f^R_s\rangle \,.
\end{split}
\end{equation}
Choosing $\bar{e}^S_s=0$, $\bar{e}^R_s=0$, it follows that
\begin{equation}\label{EQN:RepIIConv}
\begin{split}
0&= - \int_0^t \langle e^S_s, G_R \delta \bar{f}^R_s\rangle+ \int_0^t \langle e^R_s,\delta \bar{f}^R_s\rangle = \\
&=- \int_0^t \langle  G_R^* e^S_s, \delta \bar{f}^R_s\rangle+ \int_0^t \langle e^R_s,\delta \bar{f}^R_s\rangle = \int_0^t \langle  e^R_s - G_R^* e^S_s, \delta \bar{f}^R_s\rangle\,,
\end{split}
\end{equation}
and from the non-degeneracy we get $e^R_s= G_R^* e^S_s$. 

\fc{Still exploiting \eqref{EQN:RepIConv}, choosing  $\delta \bar{f}^R_s=0$ and since $(\delta \bar{f}^S_t, \delta \bar{f}^R_t,\bar{e}^S_t,\bar{e}^R_t) \in \mathcal{D}$,} it follows
\begin{equation}\label{EQN:RepIIIConv}
\begin{split}
0&=-\int_0^t \langle e^S_s, J\bar{e}^S_s \delta Z_s\rangle+\int_0^t \langle \bar{e}^S_s,\delta f^S_s\rangle+ \int_0^t \langle \bar{e}^R_s,\delta f^R_s\rangle = \\
&=\int_0^t \langle \bar{e}^S_s,\delta f^S_s\rangle + \int_0^t \langle \bar{e}^S_s,Je^S_s \delta Z_s\rangle+\int_0^t \langle \bar{e}^S_s,G^*_R \delta f^S_s\rangle = \\
&=\int_0^t \langle \bar{e}^S_s,\delta f^S_s + Je^S_s \delta Z_s + G^*_R \delta f^S_s \rangle \,,
\end{split}
\end{equation}
where the last term in the second equality in equation \eqref{EQN:RepIIIConv} follows from the fact that since $(\delta \bar{f}^S_t, \delta \bar{f}^R_t,\bar{e}^S_t,\bar{e}^R_t) \in \mathcal{D}$ it holds
\[
\bar{e}^R_s = G^*_R \bar{e}^S_s\,;
\]
\fc{ so that, again by non--degeneracy} we end up with
\[
\delta f^S_s = - Je^S_s \delta Z_s - G^*_R \delta f^S_s\,,
\]
and the proof is thus complete.
\end{proof}

Let us consider the particular case with
\[
\delta f_t^R = - \tilde{R} e^R_t \delta Z_t\,,\quad \delta f^{N}_t = \xi_t \delta Z^{N}_t \,,\quad \delta f^{C}_t = u_t \delta Z^C_t\,,
\]
such that
\begin{equation}\label{EQN:DissCond}
\mathbb{E} \int_0^t \langle e^R_s , \tilde{R} e^R_s \delta Z_s \rangle - \mathbb{E} \int_0^t \langle e^{N}_s, f^{N}_s \delta Z^{N}_s \rangle \geq 0\,,
\end{equation}
then the following definition can be given.

\begin{Definition}[Stochastic input--state--output port-Hamiltonian system]\label{DEF:SPHSExplicitI}
Let $\XX$ be an $n$--dimensional manifold $\mathcal{X}$, $\mathbf{Z} = (Z,Z^R,Z^C,Z^N)$ be a semimartingale, $H:\mathcal{X}\to \RR$ be a Hamiltonian function and $\mathcal{D}$ be a \textit{generalized stochastic Dirac structure}. The \textit{stochastic input--output port-Hamiltonian system} with \textit{stochastic Dirac structure} in equation \eqref{EQN:DSExpl} is given by
\begin{equation}\label{EQN:INSPHSFin1}
\begin{cases}
\delta X_t &= \left (\tilde{J} + G_R \tilde{R} G^*_R\right ) \dd H(X_t) \delta Z_t - G_C u_t \delta Z^C_t- G_{N} \xi_t \delta Z^{N}_t\,,\\
e^{N}_t &= G_{N}^* \dd H(X_t)\,,\\
e^C_t &= G^*_C \dd H(X_t)\,.\\
\end{cases}
\end{equation}
\end{Definition}

According to \cite{EmeBook}, we have denoted the \textit{Stratonotich operator} as
\[
e(x,z) : T_z \RR^m \to T_x \XX\,;
\]
by identifying $T_z \RR^m \simeq \RR^m$, for a given $\RR^m-$ valued semimartingale $\mathbf{Z}$, we can define the $\XX-$valued SDE as
\[
\delta X_t = e(X_t,\mathbf{Z}_t)\mathbf{Z}_t\,,\quad t \in I\,.
\]

\color{black}
Equation \eqref{EQN:INSPHSFin1} can be rewritten in term of \textit{Stratonovich operator}. Consider $Z$ to be a $\RR-$valued semimartingale, whereas $Z^{N}_t$, resp. $Z^{C}_t$, is a $\RR^{n^{N}}-$valued, resp. $\RR^{n^{C}}-$valued, semimartingale, with $m=1 + n^{N} + n^C$. Denote for short the vector fields
\begin{equation}\label{EQN:VFIO}
\begin{split}
&\left (\tilde{J} + G_R \tilde{R} G^*_R\right ) \dd H =\mathrm{V}^S\,,\\
G_{N}\xi_t &= \sum_{i=1}^{n^{N}} \mathrm{V}^{N}_i \,,\quad G_{C} = \sum_{i=1}^{n^{N}} \mathrm{V}^{C}_i \,,
\end{split}
\end{equation} 
where $\mathrm{V}^\alpha_j$, $\alpha = S,\,N,\,C$, are vector fields over $\XX$.
\color{black}
Let $\{e_1^\alpha,\dots,e^\alpha_{n^\alpha}\}$ be a basis for $\RR^{n^\alpha}$, $\alpha=N,\,C$; for $y = (y^S,y^{N},y^C) \in \RR \times \RR^{n^{N}} \times \RR^{n^C}$, we can define the \textit{Stratonovich operator} as
\begin{equation}\label{EQN:StratOP}
\begin{split}
e(x,z)(y^S,y^{N},y^C) &= y^S \mathrm{V}^S(x) - \sum_{i=1}^{n^{N}} y^{N}_i \mathrm{V}^{N}_i(x) - \sum_{i=1}^{n^{C}} y^C_i \mathrm{V}^{C}_i(x) u^i_t =
\\
&= e^S(x,z)(y^S) - e^{N}(x,z)(y^{N}) - e^C(x,z)(y^C)\,,
\end{split}
\end{equation}
so that equation \eqref{EQN:INSPHSFin1} can be formally defined as a Stratonovich SDE over the manifold $\XX$ as
\begin{equation}\label{EQN:INSPHSFin1SO}
\delta X_t = e^S(X_t,Z_t)\delta Z_t - e^{N}(X_t,Z^{N}_t)\delta Z^{N}_t -e^C(X_t,Z^C_t)\delta Z^C_t\,.  
\end{equation}

Notice that, equation \eqref{EQN:INSPHSFin1SO} implies that, for any $\varphi \in C^\infty(\XX)$,
\begin{equation}\label{EQN:fStratLong}
\begin{split}
\varphi(X_t) -\varphi(X_0) = \int_0^t (\mathrm{V}^S \varphi)(X_s)\delta Z_s &- \sum_{i=1}^{n^{N}} \int_0^t (\mathrm{V}^{N}_i \varphi)(X_s) \delta Z^{N;i}_s +\\
&- \sum_{i=1}^{n^{C}} \int_0^t (\mathrm{V}^{C}_i u^i_t \varphi)(X_s) \delta Z^{C;i}_s \,.
\end{split}
\end{equation}

\begin{Example}\label{EX:IO2}
\begin{description}[style=unboxed,leftmargin=0cm]
\item[(i)]
As in Example \ref{EX:IO} let $\left (\XX,B\right )$ be a \textit{Poisson manifold}, with $B^{\#}: \TT^* \XX \to \TT \XX$ the \textit{Poisson morphism} introduced in Section \ref{SEC:PHS}, then
\[
\mathcal{D}_B = \left \{(B^{\#}\theta,\theta)\,:\,\theta \in T^*\XX\right \}
\]
defines a Dirac structure which leads to the \textit{Hamilton equations}
\[
\delta X_t = B^{\#}(\dd H)(X_t)\delta Z_t + B^{\#}(\dd H_{N})(X_t)\delta Z^{N}_t+ u B^{\#}(\dd H_C)(X_t)\delta Z^C_t\,.
\]
\fc{Notice that, in the autonomous case, namely when $u \equiv 0$, previous equation coincides with Hamilton dynamics on Poisson manifold as in \cite{Bis,Ort}.}

\item[(ii)] Let $\left (\XX,B\right )$ be a \textit{Leibniz manifold}, with $B^{\#}: \TT^* \XX \to \TT \XX$ the associated morphism as defined in equation \eqref{EQN:LeiMor23}, then
\[
\mathcal{D}_B = \left \{(B^{\#}_L\theta,\theta)\,:\,\theta \in T^*\XX\right \}
\]
defines a Dirac structure \fc{leading to}
\begin{equation}\label{EQN:SPHSDissRap}
\begin{split}
\delta X_t &= B^{\#}_L(\dd H)(X_t)\delta Z_t + B^{\#}_L(\dd H_{N})(X_t)\delta Z^{N}_t + u B^{\#}_L(\dd H_C)(X_t)\delta Z^C_t\,,
\end{split}
\end{equation}
so that stochastic Hamilton dynamics introduced in equation \eqref{EQN:ESPHSDiss} can be framed within the SPHS setting as well.
\end{description}
\demo 
\end{Example}

\fc{Therefore, we can} prove that Definition \ref{DEF:SPHSExplicitI} generalizes the classical deterministic PHS given in equation \eqref{EQN:EPHSDc}.

\begin{Proposition}\label{PRO:LocVSPHS}
\fc{Consider $\mathcal{D}$ defined as}
\[
\left (-\delta X_t,\dd H,\delta f^R_t,e^R,\delta f^C_t,e^C,\delta f^{N}_t,e^{N}\right ) \in \mathcal{D}(X_t)\,,
\]
if and only if
\begin{equation}\label{EQN:CondDS}
\begin{cases}
&\delta X_t = \left (J(X_t) - R(X_t)\right ) \partial_x H(X_t) \delta Z_t + g(X_t) u \delta Z^C_t + \xi(X_t) \delta Z^{N}_t  \,,\\
&e^C = g^T(X_t)\partial_x H(X_t) \,,\\
\end{cases} 
\end{equation}
with $J=-J^T$, then $\mathcal{D}$ defines a Dirac structure.
\end{Proposition}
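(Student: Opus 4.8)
The plan is to reproduce the proof of Proposition \ref{PRO:IsoPHS} in the Stratonovich setting: everywhere one had a duality pairing of vectors and covectors one now uses the pairing \eqref{EQN:PairVFF} of a one-form with a Stratonovich vector field, pointwise equalities of vector fields become equalities of Stratonovich differentials in the sense of Definition \ref{DEF:SDEStrat}, and algebraic identities must be extracted from the integral characterisation of such equalities rather than by differentiating in time. Writing $\mathcal{D}$ for the subbundle whose fibre over $X_t$ consists of the tuples $(-\delta X_t,\theta,\delta f^R_t,e^R,\delta f^C_t,e^C,\delta f^{N}_t,e^{N})$ obeying the analogue of \eqref{EQN:CondDS} with $\partial_x H(X_t)$ replaced by a generic effort $\theta$, the task is to establish $\mathcal{D}=\mathcal{D}^\perp$ for the orthogonal complement \eqref{EQN:OrtCom}, i.e.\ the two inclusions.

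For $\mathcal{D}\subseteq\mathcal{D}^\perp$ I would take two tuples of the above type and substitute them into the symmetric form of \eqref{EQN:OrtCom}, collecting the contributions channel by channel --- the integrals against $\delta Z$, $\delta Z^C$ and $\delta Z^{N}$ are kept separate, which is legitimate precisely because $\mathcal{D}$ is defined by an equality of Stratonovich differentials tested against arbitrary one-forms. The storage contribution in the $\delta Z$ channel is $-\int_0^t\langle\theta,(J-R)\bar\theta\rangle\,\delta Z_s-\int_0^t\langle\bar\theta,(J-R)\theta\rangle\,\delta Z_s$; the terms involving $J$ cancel since $J=-J^{T}$ yields $\langle\theta,J\bar\theta\rangle+\langle\bar\theta,J\theta\rangle=0$, the terms involving $R$ are matched by the resistive port, and the control and noise couplings cancel against $e^C=g^{T}\theta$ and the corresponding relation for the noise port, exactly as in \eqref{EQN:RepI}--\eqref{EQN:RepII} of Proposition \ref{PRO:KernRepSDS1}. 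Thus the whole pairing vanishes for all $t\in I$ and the tuple lies in $\mathcal{D}^\perp$.

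For the converse $\mathcal{D}^\perp\subseteq\mathcal{D}$ I would take $(-\delta X_t,\theta,\delta f^R_t,e^R,\delta f^C_t,e^C,\delta f^{N}_t,e^{N})\in\mathcal{D}^\perp$, so that its pairing with every element of $\mathcal{D}$ vanishes for all $t$, and feed it suitably chosen test elements, mirroring the passage \eqref{EQN:CondDSAdd}--\eqref{EQN:IOSys} of Proposition \ref{PRO:IsoPHS}. Choosing test elements with $\delta\bar f^R_t=\delta\bar f^C_t=\delta\bar f^{N}_t=0$ collapses the orthogonality relation to $\int_0^t\langle\bar\theta,\ \delta X_s-(J-R)\theta\,\delta Z_s-g\,\delta f^C_s-\xi\,\delta Z^{N}_s\rangle=0$ for every one-form $\bar\theta$ and every $t$; substituting the resulting identity $\delta X_t=(J-R)\theta\,\delta Z_t+g\,\delta f^C_t+\xi\,\delta Z^{N}_t$ back in and varying the remaining test directions isolates $e^C=g^{T}\theta$ and the analogous output relations for the other ports, whence taking $\theta=\partial_x H(X_t)$ returns \eqref{EQN:CondDS}.

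The step I expect to be the real obstacle is the deduction ``$\int_0^t\langle\bar\theta,\delta X_s\rangle=0$ for all $\bar\theta\in\Omega^1(\XX)$ and all $t$, hence $\delta X=0$'': in the deterministic Proposition \ref{PRO:IsoPHS} one simply differentiates the integral identity in time, but here no such pointwise derivative exists, so the argument must instead be phrased as the uniqueness statement that an adapted continuous semimartingale is determined by the family of scalar identities indexed by $\bar\theta$, which is exactly Definition \ref{DEF:SDEStrat} together with the existence and uniqueness result underlying Theorems \ref{THM:E!} and \ref{THM:E!Diss} (namely \cite[Theorem 7.21]{EmeBook}). Consequently the whole equivalence holds only up to the explosion time $\tau$ of that theorem and one localises accordingly; once this is in place the remaining manipulations are purely formal and identical to the deterministic case, modulo careful bookkeeping of the driving semimartingales $Z,Z^R,Z^C,Z^{N}$.
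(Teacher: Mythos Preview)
Your proposal is correct and follows essentially the same route as the paper: the paper's proof also picks test elements with $\delta\bar f^{R}_t=\delta\bar f^{C}_t=\delta\bar f^{N}_t=0$, varies $\bar\theta$ to force the state equation \eqref{EQN:IOSys}, and then back-substitutes to read off the output relations. If anything your outline is more complete than the paper's, since you explicitly treat the inclusion $\mathcal{D}\subset\mathcal{D}^{\perp}$ (the paper writes out only $\mathcal{D}^{\perp}\subset\mathcal{D}$) and you flag the genuine stochastic subtlety---that the passage from ``$\int_0^t\langle\bar\theta,\cdot\rangle=0$ for all $\bar\theta$, all $t$'' to a pointwise identity of Stratonovich differentials is a uniqueness statement in the sense of Definition~\ref{DEF:SDEStrat} rather than a time-differentiation---which the paper leaves implicit under ``thus it immediately follows''.
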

\begin{proof}
Consider $\left (-\delta X_t,\theta,\delta f^R_t,e^R,\delta f^C_t,e^C,\delta f^{N}_t,e^{N}\right ) \in \mathcal{D}^\perp$, we have that
\[
\begin{split}
&-\langle \bar{\theta},\delta X_t\rangle - \langle \theta,\delta \bar{X}_t\rangle + \langle \bar{e}^R,\delta f^R_t\rangle + \langle e^R,\delta \bar{f}^R_t\rangle +\\
&+ \langle \bar{e}^C,\delta f^C_t\rangle + \langle e^C,\delta \bar{f}^C_t\rangle + \langle \bar{e}^{N},\delta f^{N}_t\rangle + \langle e^{N},\delta \bar{f}^{N}_t\rangle = 0\,, 
\end{split}
\]
for any $\left (-\delta \bar{X}_t,\bar{\theta},\delta \bar{f}^R_t,\bar{e}^R,\delta \bar{f}^C_t,\bar{e}^C,\delta f^{N}_t,e^{N}\right )$ satisfying \eqref{EQN:CondDS}.

If $\delta \bar{f}^C_t = \delta \bar{f}^R_t=\delta \bar{f}^{N}_t = 0$, we get
\begin{equation}\label{EQN:CondDSAdd}
\begin{cases}
e^R &= \theta\,,\\
\delta f_t^R &= R(X_t) e^R \delta Z_t\,,\\
\delta f^C_t &= u_t \delta Z^C_t\,\\
\delta f^{N}_t &= \xi_t \delta Z^{N}_t\,\\
\end{cases} 
\end{equation}
that, $\forall \bar{\theta}$, implies
\begin{equation}\label{EQN:IOSys1}
\begin{split}
&-\langle \bar{\theta},\delta X_t\rangle - \langle \theta,J(X_t)\bar{\theta} \delta Z_t\rangle + \langle \bar{\theta},R(x) e^R \delta Z_t\rangle +\\
&+ \langle  g^T(X_t)\bar{\theta},u_t \delta Z^C_t\rangle + \langle  \xi^T_{N}(X_t)\bar{\theta},\xi(X_t)\delta Z^{N}_t\rangle = 0\,. 
\end{split}
\end{equation}
Thus it immediately follows, with $\theta = \partial_x H(X_t)$,
\begin{equation}\label{EQN:IOSys}
\delta X_t = \left (J(X_t)-R(X_t)\right )\partial_x H(X_t) \delta Z_t + g(X_t) u_t \delta Z^C_t+\xi(X_t)\delta Z^{N}_t\,,
\end{equation}
and inserting equation \eqref{EQN:IOSys} into equation \eqref{EQN:IOSys1} we obtain 
\[
\begin{cases}
e^R = \partial_x H(X_t)\,\\
e^C = g^T(x)\partial_x H(X_t)\,,\\
\end{cases}
\]
and thus $\left (-\delta X_t,\dd H,\delta f^R_t,e^R,\delta f^C_t,e^C,\delta f^{N}_t,e^{N}\right ) \in \mathcal{D}$.
\end{proof}

\subsubsection{On It\^{o} definition for implicit stochastic port--Hamiltonian systems}\label{SSEC:Ito}

All results regarding implicit SPHS \fc{can be introduced} exploiting the notion of tangent and cotangent bundle of order 2.  \fc{The latter implies that the corresponding implicit stochastic Hamiltonian system is defined in It\^{o} sense}. In particular, Definition \ref{DEF:GDS} can be directly generalized to consider It\^{o} stochastic vector fields as follows.

\begin{Definition}[Generalized stochastic Dirac structure of order 2]\label{DEF:SGDS2}
A \textit{generalized stochastic Dirac structure} of order 2, $\mathcal{D}_2$, on a manifold $\XX$ is a smooth vector subbundle $\mathcal{D} \subset \tau \XX \oplus \tau^* \XX$ such that $\mathcal{D}_2=\mathcal{D}^\perp_2$, being $\mathcal{D}^\perp_2$ the orthogonal complement defined as
\[
\begin{split}
\mathcal{D}^\perp_2 &= \left \{ (d X_t,\theta)\subset \tau \XX \oplus \tau^* \XX \, : \right . \,\\
&\left . \qquad \qquad \, \int_0^t \langle \theta,d \bar{X}_s\rangle + \int_0^t \langle \bar{\theta},d X_s \rangle=0\,,  \forall \, (d \bar{X}_t,\bar{\theta}) \in \mathcal{D}_2,\,t \in I \right \}\,.
\end{split}
\]
\end{Definition}

Then, exactly as \fc{in Section} \ref{SEC:ISPHS}, we can connect different ports in a power preserving manner using It\^{o} stochastic vector fields. In what follows, with a slight abuse of notation, we will denote for short by $\mathcal{F}_{Z^\alpha}:= \mathfrak{X}_2^{Z^\alpha} (\XX)$ the space of \textit{It\^{o} vector fields} on $\XX$ perturbed by a general semimartingale $Z^\alpha$; to emphasize that any flow element is a \textit{It\^{o} vector field}, we will denote any element belonging to $\mathcal{F}_{Z^\alpha}$ as $d f$. Consequently $\mathcal{E}:= \Omega_2(\XX)$ is the space of form of order 2 on $\XX$. Thus, to any element $(e,d f) \in \mathcal{E}\times \mathcal{F}_{Z^\alpha}$ we can associate a natural pairing, \cite{Holm1}.

\begin{Definition}\label{DEF:ImpSPHSsh2}
Let $\XX$ be an $n$--dimensional manifold $\mathcal{X}$, $\mathbf{Z} = (Z,Z^R,Z^C,Z^N)$ be a semimartingale, $H:\mathcal{X}\to \RR$ be a Hamiltonian function and $\mathcal{D}_2$ be a \textit{generalized stochastic Dirac structure} of order 2. The \textit{implicit generalized port-Hamiltonian system} $(\XX,\mathbf{Z},\mathcal{F},\mathcal{D}_2,H)$ is defined by
\[
\left (-d X_t,\dd_2 H,d f^R_t,e^R_t,d f^C_t,e_t^C,d f^{N}_t,e_t^{N}\right ) \in \mathcal{D}_2(X_t)\,.
\]
\end{Definition}

This Definition can be endowed with (weak) resistive relation (as in Definition \ref{DEF:ImpSPHSsh}) of the form
\[
\left (d f^R_t,e^R_t,d f^{N}_t,e_t^{N}\right ) \in \mathcal{R}_W(X_t)\,,
\]
requiring that
\[
\mathbb{E}\int_0^t \langle e^R_s,d f^R_s \rangle + \mathbb{E} \int_0^t \langle e^{N}_s,d f^{N}_s \rangle \leq 0\,.
\]

Same computation as in Proposition \ref{PRO:KernRepSDS1} \fc{allows us to obtain} a input--state--output form as in Definition \ref{DEF:SPHSExplicitI}. \fc{We will omit details, for the sake of brevity, while showing how one can reformulate SPHS in Definition \ref{DEF:SPHSExplicitI} in It\^{o} form.}

\begin{Proposition}\label{PRO:SPHSTOIto}
Let $X$ be the solution to the Stratonovich SPHS \eqref{EQN:INSPHSFin1}, and let $Z$, $Z^{N}$ and $Z^C$ be such that
\begin{equation}\label{EQN:NullQC}
\langle Z,Z^C\rangle_t = \langle Z,Z^{N}\rangle_t = \langle Z^{N},Z^C\rangle_t = 0\,,
\end{equation}
where $\langle Z^j,Z^i\rangle_t$ is the quadratic covariation between $Z^i$ and $Z^j$ at time $t$. Then $X$ admits an equivalent formulation in terms of  It\^{o} integration 
\begin{equation}\label{EQN:ItoSPHSKer}
\begin{split}
dX_t &= \mathrm{V}^S(X_t) dZ_t +  \La_{\mathrm{V}^S} \mathrm{V}^S(X_t) d\langle  Z,Z \rangle_t+\\
&-\sum_{i= 1}^{n^{N}}\mathrm{V}^{N}_i (X_t) dZ^{N}_t- \frac{1}{2} \sum_{i,j = 1}^{n^{N}} \La_{\mathrm{V}^{N}_j} \mathrm{V}^{N}_i (X_t) d\langle Z^{N;i},Z^{N;j}\rangle_t+\\
&-\sum_{i= 1}^{n^C} \mathrm{V}_i^C  (X_t)u_t^i dZ^{C;i}_t - \frac{1}{2} \sum_{i,j = 1}^{n^C} \La_{\mathrm{V}^C_j} \mathrm{V}^C_i (X_t)u_t d\langle Z^{C;i},Z^{C;j}\rangle_t\,.
\end{split}
\end{equation}
\end{Proposition}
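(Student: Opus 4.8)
The plan is to descend from the intrinsic manifold statement to the elementary real-valued It\^o--Stratonovich conversion by testing against arbitrary $\varphi\in C^\infty(\XX)$, and then to repackage the resulting correction terms geometrically. I would first recall that, for real semimartingales, $\int_0^t H_s\,\delta Y_s = \int_0^t H_s\, dY_s + \tfrac12\langle H,Y\rangle_t$, where $\langle\cdot,\cdot\rangle$ denotes quadratic covariation. Applying this to each of the three Stratonovich integrals on the right-hand side of \eqref{EQN:fStratLong} turns $\int_0^t (V^S\varphi)(X_s)\,\delta Z_s$ and the $Z^{N}$-- and $Z^C$--terms into It\^o integrals plus covariation corrections $\tfrac12\langle(V^S\varphi)(X),Z\rangle_t$, and analogously for the other ports.

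Second, I would evaluate each of these covariations. Since $X$ solves \eqref{EQN:INSPHSFin1SO}, the semimartingale $(V\varphi)(X)$ is again of the form \eqref{EQN:fStratLong} with $V\varphi$ in place of $\varphi$, so its local-martingale part is driven by $Z$, $Z^{N}$ and $Z^C$; the Kunita--Watanabe identity then gives, for instance, $d\langle(V^S\varphi)(X),Z\rangle_s = (V^S V^S\varphi)(X_s)\,d\langle Z,Z\rangle_s$ plus terms in $d\langle Z,Z^{N}\rangle_s$ and $d\langle Z,Z^C\rangle_s$. The hypothesis \eqref{EQN:NullQC} kills precisely these cross terms, so that only the diagonal covariations $\langle Z,Z\rangle$, $\langle Z^{N;i},Z^{N;j}\rangle$, $\langle Z^{C;i},Z^{C;j}\rangle$ survive, with coefficients built from $(V^S V^S\varphi)$, $(V^{N}_j V^{N}_i\varphi)$ and $(V^C_j V^C_i\varphi)u$ respectively (symmetrised over $i,j$; here $u$ is taken of finite variation, hence contributes no further covariation, and the combinatorial constants are those displayed in \eqref{EQN:ItoSPHSKer}).

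Third, it remains to identify the geometric content of the corrections. For vector fields $V$, $W$ the second-order operator $\varphi\mapsto V(W\varphi)$ splits into a genuinely first-order part, equal to the directional derivative of $W$ along $V$ --- written $\La_V W$ in the statement --- plus a purely second-order part; since the leading integrals in \eqref{EQN:ItoSPHSKer} are It\^o integrals against the second-order (co)tangent structure of \cite{Eme}, the latter part is already absorbed there, and only the $\La$--drift survives. Equivalently, one passes from the Stratonovich operator $e$ of \eqref{EQN:StratOP} to its associated Schwartz operator $s$ and reads off its first-order (drift) component; the two routes must yield the same expression. Collecting the surviving terms and using that the identity holds for every $\varphi\in C^\infty(\XX)$ produces \eqref{EQN:ItoSPHSKer} as an equality of $\XX$--valued semimartingales, valid up to the explosion time $\tau$ of Theorem \ref{THM:E!Diss}.

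I expect the main obstacle to be this last identification carried out coordinate-freely: one must perform the second-order tangent/cotangent bundle bookkeeping of \cite{Eme} carefully enough to see that $\int\langle\dd_2(\cdot),dX\rangle$ absorbs exactly the genuinely second-order part of $V_j V_i$ and leaves precisely the Lie-derivative drift claimed. In local coordinates this is merely the routine chain-rule computation $\partial_l V_i^k\,V_j^l$, so the difficulty is intrinsic bookkeeping rather than analysis; a subsidiary technical point is the localisation up to $\tau$, which follows from Theorem \ref{THM:E!Diss} together with the local nature of the whole argument.
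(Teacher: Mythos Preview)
Your route is sound and lands in the same place as the paper, but the organisation differs in one respect worth flagging. The paper works directly through Emery's Schwartz operator: it computes $s(x,z)(L_{\ddot z})[\varphi]$ by differentiating $\langle \dd\varphi(x(t)),\dot x(t)\rangle$ along a curve, pairs with $\dd_2\varphi$ to obtain the It\^o expression for $d\varphi(X_t)$, and then simply sets $\varphi(X_t)=X_t$ (i.e.\ takes coordinate functions) to read off \eqref{EQN:ItoSPHSKer}. Your first two steps --- real-valued Stratonovich-to-It\^o conversion on each integral of \eqref{EQN:fStratLong}, followed by computing $d\langle (V\varphi)(X),Z\rangle$ from the SDE for $X$ and invoking \eqref{EQN:NullQC} to kill the cross covariations --- reproduce exactly the same $d\varphi(X_t)$ formula with correction coefficients $(V_j V_i\varphi)(X)$, only by a more elementary path that avoids the second-order bundle machinery. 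Either way is fine; yours is arguably more transparent, the paper's is more systematically geometric.

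Where you overcomplicate matters is the third step. There is no need to split $V_j(V_i\varphi)$ into a first-order $\La_{V_j}V_i$ piece and a ``purely second-order'' piece absorbed by the It\^o integral: the statement \eqref{EQN:ItoSPHSKer} is a coordinate-wise formula for $dX_t$, and specialising $\varphi$ to a coordinate function $x^k$ makes the Hessian part vanish identically, leaving $V_j(V_i^k)=\sum_l V_j^l\partial_l V_i^k$, which is precisely what the paper denotes by $(\La_{V_j}V_i)^k$. That is all the ``geometric identification'' amounts to here --- note in particular that $\La_V W$ in this proposition is the componentwise directional derivative, \emph{not} the Lie bracket (else $\La_{V^S}V^S$ would vanish). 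Your mention of the Schwartz-operator route as an equivalent alternative is correct and matches the paper exactly; the absorption story, however, is not needed and risks obscuring a one-line specialisation.
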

\begin{Remark}
The condition \eqref{EQN:NullQC} is {\fc considered only to avoid heavy notation, a similar result being true also dropping it.}

\demo
\end{Remark}
\begin{proof}
Recall that equation \eqref{EQN:INSPHSFin1} is formulated in terms of the Stratonovich operator using \eqref{EQN:StratOP} and for any $\varphi \in C^\infty (\XX)$ equation \eqref{EQN:fStratLong} holds.

Let us consider a second order vector $L_{\ddot{v}}\in \RR^{m}$, we thus have
\begin{equation}\label{EQN:ConvItoSPHS}
\begin{split}
s(x,z)(L_{\ddot{z}})[\varphi] &= \left . \frac{d}{dt} \right |_{t=0} \langle \dd \varphi(x(t)), \dot{x}(t) \rangle =  \left . \frac{d}{dt} \right |_{t=0} \langle \dd \varphi(x(t)), e(x(t),z(t)) \dot{z}(t) \rangle = \\
&=\left . \frac{d}{dt} \right |_{t=0} \dot{z}^S\langle \dd \varphi(x(t)), \mathrm{V}^S \rangle +\\
&-\left . \frac{d}{dt} \right |_{t=0} \sum_{i=1}^{n^{N}} \dot{z}^{N;i} \langle \dd \varphi(x(t)), \mathrm{V}^{N}_i \rangle +\\
&- \left . \frac{d}{dt} \right |_{t=0} \sum_{i=1}^{n^{C}} \dot{z}^{C;i} \langle  \dd \varphi(x), \mathrm{V}^C_i u_t^i \rangle\,.
\end{split}
\end{equation}

The first term in the right hand side of equation \eqref{EQN:ConvItoSPHS}, can be rewritten as
\begin{equation}\label{EQN:ConvItoSPHSFirts}
\begin{split}
&\left . \frac{d}{dt} \right |_{t=0}  \dot{z}^S \langle \dd \varphi(x(t)),\mathrm{V}^S(x(t)) \rangle  =\\
&=\ddot{z}^S(0) \langle \dd \varphi(x(t)), \mathrm{V}^S(x(t)) \rangle  +\dot{z}^S(0) \langle \dd \langle \dd \varphi(x(t)), \mathrm{V}^S(x(t)) \rangle ,\dot{x} \rangle =\\
&=\ddot{z}^S(0) \langle \dd \varphi(x(t)), \mathrm{V}^S(x(t)) \rangle  +\dot{z}^S(0) \langle \dd  \langle \dd \varphi(x(t)), \mathrm{V}^S(x(t)) \rangle ,e(x(t),z(t))\dot{z}(t) \rangle =\\
&=\ddot{z}^S(0) \langle \dd \varphi(x(t)), \mathrm{V}^S(x(t)) \rangle +\dot{z}^S(0) \dot{z}^S(0)  \dd \langle \dd \varphi(x(t)),\mathrm{V}^S (x(t))\rangle ,\mathrm{V}^S(x(t)) \rangle =\\
&= \left \langle \langle \dd \varphi(x(t)),\mathrm{V}^S(x(t)) L_{\ddot{z}} +  \langle \langle \dd \langle \dd \varphi(x(t)), \mathrm{V}^S(x(t)) \rangle,\mathrm{V}^S (x(t)) \rangle , L_{\ddot{z}}\right \rangle\,.
\end{split}
\end{equation}

Similar computation holds for all the other terms in equation \eqref{EQN:ConvItoSPHS}. \fc{Therefore, evaluating  $s^*(x,z)(\dd_2 \varphi)$, for a given function $\varphi \in C^\infty(\XX)$, we have, exploiting equations \eqref{EQN:ConvItoSPHS}--\eqref{EQN:ConvItoSPHSFirts} together with condition \eqref{EQN:NullQC}, that}
\begin{equation}\label{EQN:ItoOP}
\begin{split}
\left \langle s^*(x,z)(\dd_2 \varphi(x)),L_{\ddot{z}}\right \rangle &= \left \langle(\dd_2 \varphi(x)), s(x,z)L_{\ddot{z}}\right \rangle = s(x,z)(L_{\ddot{z}})[\varphi] = \\
&= \left \langle \langle \dd \varphi,\mathrm{V}^S \rangle , L_{\ddot{z}}\right \rangle+ \left \langle \langle \dd \langle \dd \varphi, \mathrm{V}^S \rangle ,\mathrm{V}^S \rangle, L_{\ddot{z}}\right \rangle+\\
&-\left \langle \sum_{i,j = 1}^{n^{N}} \langle \dd \varphi, \mathrm{V}_i^{N} \rangle , L_{\ddot{z}}\right \rangle+\\
&- \left \langle \sum_{i,j = 1}^{n^{N}} \langle \dd \langle \dd \varphi, \mathrm{V}_i^{N} \rangle, \mathrm{V}_j^{N} \rangle, L_{\ddot{z}}\right \rangle+\\
&- \frac{1}{2}\left \langle \sum_{i,j = 1}^{n^C} \langle \dd \varphi,\mathrm{V}_i^C u_t^i \rangle , L_{\ddot{z}}\right \rangle +\\
&- \frac{1}{2}\left \langle \sum_{i,j = 1}^{n^C} \langle \dd \langle \dd \varphi,\mathrm{V}_i^C u_t^i\rangle ,\mathrm{V}_j^C u_t^j \rangle , L_{\ddot{z}}\right \rangle \,. 
\end{split}
\end{equation}

\fc{Hence, for any $\varphi \in C^\infty(\XX)$, we obtain}
\begin{equation}\label{EQN:FinalITOSPHS}
\begin{split}
d\varphi(X_t) &= \langle \dd_2 \varphi,dX_t\rangle = \left \langle s^*(X_t,\mathbf{Z}_t)(\dd_2 \varphi),d\mathbf{Z}_t \right \rangle = \\
&=\langle \dd \varphi,\mathrm{V}^S \rangle (X_t) dZ_t + \langle \dd \langle \dd \varphi, \mathrm{V}^S \rangle ,\mathrm{V}^S \rangle (X_t) d\langle  Z,Z \rangle_t+\\
&-\sum_{i= 1}^{n^{N}} \langle \dd \varphi,\mathrm{V}^{N}_i \rangle (X_t) dZ^{N}_t- \frac{1}{2} \sum_{i,j = 1}^{n^{N}} \langle \dd  \langle \dd \varphi, \mathrm{V}_j^{N} \rangle, \mathrm{V}_i^{N} \rangle (X_t) d\langle Z^{N;i},Z^{N;j}\rangle_t+\\
&-\sum_{i= 1}^{n^C} \langle \dd \varphi,\mathrm{V}_i^C \rangle (X_t)u_t^i dZ^{C;i}_t - \frac{1}{2} \sum_{i,j = 1}^{n^C} \langle \dd \langle \dd \varphi,\mathrm{V}_j^C u_t^j \rangle,\mathrm{V}_i^C u_t^i \rangle (X_t) d\langle Z^{C;i},Z^{C;j}\rangle_t\,.
\end{split}
\end{equation}

Using the fact that for the \textit{Lie derivative} of a function $\varphi$ along a \textit{vector field} $\mathrm{V}$ it holds, see Remark \ref{REM:LieDer}, 
\[
\La_{\mathrm{V}} \varphi = \ii_{\mathrm{V}}\dd \varphi = \langle \dd \varphi,\mathrm{V}\rangle \,,
\]
and considering $\varphi(X_t)=X_t$ in equation \eqref{EQN:FinalITOSPHS} we get finally
\begin{equation}\label{EQN:FinalITOSPHS}
\begin{split}
dX_t &= \mathrm{V}^S(X_t) dZ_t +  \La_{\mathrm{V}^S} \mathrm{V}^S(X_t) d\langle  Z,Z \rangle_t+\\
&-\sum_{i= 1}^{n^{N}}\mathrm{V}^{N}_i (X_t) dZ^{N}_t- \frac{1}{2} \sum_{i,j = 1}^{n^{N}} \La_{\mathrm{V}^{N}_j} \mathrm{V}^{N}_i (X_t) d\langle Z^{N;i},Z^{N;j}\rangle_t+\\
&-\sum_{i= 1}^{n^C} \mathrm{V}_i^C  (X_t)u_t^i dZ^{C;i}_t - \frac{1}{2} \sum_{i,j = 1}^{n^C} \La_{\mathrm{V}^C_j u_t^j} \mathrm{V}^C_i (X_t) u_t^i d\langle Z^{C;i},Z^{C;j}\rangle_t\,,
\end{split}
\end{equation}
and the claim is proved.
\end{proof}

\subsection{Passivity and power--preserving property}


We are now ready to address the energy conservation property, and more important in port-Hamiltonian systems, the passivity property. In particular, when one generalizes a deterministic input--output system to the stochastic case, the standard notion of passivity has several possible generalizations, leading to different possible definitions. 

\begin{Definition}[Strong/weak passivity]\label{DEF:SWPass}
Let $H \in C^\infty (\XX)$ be the total energy of the system. The SPHS \eqref{EQN:ESPHSDiss} $X$ is strongly, resp. weakly, passive if
\begin{equation}\label{EQN:SPass}
H(X_t) \leq H(X_0) + \int_0^t u^T(s)y(s)\delta Z^C_s\,,
\end{equation}
resp.
\begin{equation}\label{EQN:WPass}
\mathbb{E} H(X_t) \leq \mathbb{E}H(X_0) + \mathbb{E}\int_0^t u^T(s)y(s)\delta Z^C_s\,,
\end{equation}
for all $t \geq 0$.
\end{Definition}

\begin{Remark}
In equation \eqref{EQN:WPass}, we \fc{introduced} the passivity property in the sense of Stratonovich integration. The choice is motivated by the intuition behind the definition of SPHS.\fc{In fact, passivity means} that the total energy variation is equal or less to the total power supplied to the system, integrated along system trajectories. Since the system is formulated in terms of Stratonovich integral, we have to therefore consider the total power supplied perturbed by the corresponding control semimartingale $Z^C$, in the sense of Stratonovich.

\fc{Nonetheless, we woudl like to underline that, particularly when one considers weak passivity}, computing the expectation of a Stratonovich integral might be difficult. Therefore, as standard when dealing with energy conservation in stochastic Hamiltonian dynamics, the easiest way is to reformulate the Stratonovich integral in terms of the It\^{o} integral so that one can exploit the good probabilistic properties of the It\^{o} integral. 

Assuming for instance the SPHS to be given as \eqref{EQN:CondDS}, so that it can be converted into the equivalent formulation in terms of It\^{o} integral using Proposition \ref{PRO:SPHSTOIto}, we thus have
\[
\begin{split}
\mathbb{E} H(X_t) &\leq \mathbb{E}H(X_0) + \mathbb{E}\int_0^t u(X_s)y(s) dZ^C_s+\\
&+ \frac{1}{2} \mathbb{E}\int_0^t \partial_x \left (g(X_s)u(X_s)\right ) g(X_s)u(X_s) d \langle Z^C,Z^C \rangle_s \,,
\end{split}
\]
where we have denoted by $u$ the control in feedback form as a function of the state $X$. The weakly passivity requires that the process
\[
\left (H(X_{t}) -\mathbb{E}\int_0^tu(s) \partial_x \left (g(X_s)u(X_s)\right ) g(X_s)u(X_s) d \langle Z^C,Z^C \rangle_s- \int_0^t u^T(X_s)y(s)dZ^C_s\right )_{t \in \RR^+}\,,
\]
is a super--martingale.

Clearly, in the trivial case of $Z^C_t(\omega):=t$, passivity reduces to the classical requirement
\[
\mathbb{E} H(X_t) \leq \mathbb{E} H(X_0) + \mathbb{E}\int_0^t u^T(s)y(s)ds\,.
\]
\demo \end{Remark}

As briefly mentioned, strong passivity is a too strong assumption in many concrete situations. In fact, the presence of an external random noise implies that the system does not in general conserve energy. \cite{Ort} shows that, for Hamiltonian dynamics on a Poisson manifold, if the random perturbations are involution w.r.t. the energy of the system $H$, that is $\PS{H}{H_{N}}=0$, then the Hamiltonian system preserves the energy. In the present case, since we are considering Hamiltonian dynamics driven by the Leibniz bracket, even requiring that the stochastic Hamiltonian is an involution w.r.t the energy of the system $H$, will not ensure neither energy conservation nor passivity of the system.

Differently to the deterministic case, where considering the dissipation matrix $R$ to be positive definite allows to conclude that the PHS is passive, in the stochastic setting this is not the case since the noise driving the system may lead to an increase of the internal energy. To see that, it is enough to consider the energy conservation relation for a SPHS as given in Proposition \ref{PRO:LocVSPHS} with $\xi = 0$
\begin{equation}\label{EQN:EnBalanceNP}
H(X_t)-H(X_0) = - \int_0^t \partial_x^T H(X_s) R(X_s) \partial_x H(X_s) \delta Z_s + \int_0^t u^T y \delta Z_s^C\,.
\end{equation}
Therefore even requiring $R$ to be strictly positive definite we could not infer from equation \eqref{EQN:EnBalanceNP} the strongly passive condition
\[
H(X_t)-H(X_0) \leq  \int_0^t u^T y \delta Z_s^C\,,
\]
because,\fc{ also by just considering the trivial case of diffusive semimartingale $Z: (\omega,t)\mapsto (t+ W_t(\omega))$, $W_t$ being a standard Brownian motion, the passivity fails.}

In order to guarantee strong passivity for the SPHS we would have to require the stronger condition
\[
\int_0^t \partial_x^T H(X_s) R(X_s) \partial_x H(X_s) \delta Z_s (\omega) \geq 0\,,
\]
along all possible realizations $\omega$.

\fc{This is the main motivation why energy dissipation is usually required to hold in mean value} instead of $\omega-$wise. In fact, a positivity condition on the structural matrix $R$ does not guarantee passivity, but the requirement
\[
\mathbb{E}\int_0^t \partial_x^T H(X_s) R(X_s) \partial_x H(X_s) \delta Z_s \geq 0\,,
\]
is satisfied by a significantly larger number of semimartingales.

We stress that, due to the generality of the present setting, a complete characterization of the passivity property of SPHS is beyond the scope of the present work; in particular passivity will be object of further development in a future study. Nonetheless, next examples show that our definition of (weak) passivity leads, with some simplifications, to existent definitions of stochastic passivity.
\begin{Example}
\begin{description}[style=unboxed,leftmargin=0cm]
\item[(i)] Let consider the particular case of $Z_t(\omega) := t$, $Z^C_t(\omega):=t$ and $Z^{N}_t(\omega):= W_t(\omega)$ with $W_t$ a standard Brownian motion. Then by \cite[Th. 32]{Pro} it follows that the input--output SPHS 
\begin{equation}\label{EQN:SPHSExample}
\begin{cases}
\delta X_t &= \left [ \left [J(X_t) - R(X_t)\right ]\partial_x H(X_t) + g(X_t) u_t\right ) \delta t + \xi(X_t) \delta W_t \,,\\
y_t &= g^T(X_t) \partial_x H(X_t)\,.\\
\end{cases}
\end{equation}
is a Markov process. Using Proposition \ref{PRO:SPHSTOIto} we can derive the corresponding It\^{o} formulation to be
\begin{equation}\label{EQN:SPHSExampleIto}
\begin{cases}
d X_t &= \left [ \left [J(X_t) - R(X_t)\right ]\partial_x H(X_t) + g(X_t) u_t 
+ \frac{1}{2} \left (\partial_x \xi(X_t)\right )\xi(X_t) \right ] d t + \xi(X_t) d W_t \,,\\
y_t &= g^T(X_t) \partial_x H(X_t)\,.\\
\end{cases}
\end{equation}

Using the martingale property of $W$ together with It\^{o} formula, we obtain the relation
\[
\mathbb{E}X_t - X_0 = \int_0^t \mathbb{E} \mathcal{L}H(X_t)ds\,,
\]
being $\mathcal{L}$ the infinitesimal generator associated to $X$ defined as
\begin{equation}\label{EQN:IGSPHSExample}
\begin{split}
\mathcal{L}H(x) &:= \partial_x^T H(x) \left (\left [J(x) - R(x)\right ]\partial_x H(x) + g(x) u_t 
+ \frac{1}{2} \left (\partial_x \xi(x)\right )\xi(x) \right )+\frac{1}{2} \xi^2(x) \partial^2_x H(x)\,,
\end{split}
\end{equation}
see, e.g., \cite{Oks,Pro}.

By requiring 
\begin{equation}\label{EQN:WeakPassCond}
\begin{split}
&\partial_x^T H(x) \left (R(x)\partial_x H(x) - \frac{1}{2} \left (\partial_x \xi(x)\right )\xi(x) \right ) - \frac{1}{2} \xi^2(x) \partial^2_x H(x)\geq 0\,,
\end{split}
\end{equation}
we would obtain \fc{the system to be weakly passive}. \fc{Let us underline that} condition \eqref{EQN:WeakPassCond} goes in the direction highlighted in equation \eqref{EQN:DoubleDiss} where a dissipation condition is imposed jointly on the resistive and stochastic ports.

\item[(ii)] Let consider stochastic perturbations for both resistive and storage port, with $Z_t(\omega):= t + B_t$, where $B$ is a standard Brownian motion independent of $W$. Therefore equation \eqref{EQN:SPHSExampleIto} becomes
\begin{equation}\label{EQN:SPHSExampleIto2}
\begin{cases}
d X_t &= \left [ \left [J(X_t) - R(X_t)\right ]\partial_x H(X_t) + g(X_t) u + \frac{1}{2} \left (\partial_x \xi(X_t)\right )\xi(X_t) \right ] d t+ \\
&+\frac{1}{2} \left (\partial_x \left [J(X_t) - R(X_t)\right ]\partial_x H(X_t)\right )\left [J(X_t) - R(X_t)\right ]\partial_x H(X_t) d t +\\
&\qquad + \left [J(X_t) - R(X_t)\right ]\partial_x H(X_t) dB_t + \xi(X_t) d W_t \,,\\
y_t &= g^T(X_t) \partial_x H(X_t)\,,\\
\end{cases}
\end{equation}
and denoting for short by $\mu$ the drift in equation \eqref{EQN:SPHSExampleIto2}, we obtain the infinitesimal generator of the form
\[
\begin{split}
\mathcal{L}H(x) &:= \partial_x^T H(x) \mu(x) + \frac{1}{2} \left (\left [J(x) - R(x)\right ]\partial_x H(x) \right )^2 \partial^2_x H(x) +\frac{1}{2} \xi^2(x) \partial^2_x H(x) \,.
\end{split}
\]
The condition 
\[
\begin{split}
\mathcal{L}H(x) \leq 0\,,
\end{split}
\]
guarantees that $X$ is weakly passive.
\end{description}
\demo
\end{Example}

\subsection{Interconnection of Stochastic port-Hamiltonian systems}\label{SEC:IntSPHS}

The present Section is devoted to prove that the composition of $N$ Dirac structures is again a Dirac structure. We will start showing that the composition of two Dirac structures is again a Dirac structure; clearly this immediately generalize by induction to the fact the composition of $N$ Dirac structures is again a Dirac structure.

Let $\mathcal{D}_A \subset \TT \XX_A \times \TT^* \XX_A \times \mathcal{F} \times \mathcal{E}$ and $\mathcal{D}_B \subset \TT \XX_B \times \TT^* \XX_B \times \mathcal{F} \times \mathcal{E}$ be two Dirac structures perturbed by two semimartingales $Z_A$ and $Z_B$, being $\XX_A$ and $\XX_B$ two general manifolds. The particular form for the Dirac structures implies that $\mathcal{D}_A$ and $\mathcal{D}_B$ shares a common port $\mathcal{F} \times \mathcal{E}$, through which they are connected.

Equating flows and efforts through the shared port $\mathcal{F} \times \mathcal{E}$, i.e.
\[
\delta f_A = - \delta f_B \,,\quad e_A = e_B\,,
\]
where $\delta f_A$ and $e_A$ are the flow and effort connected to the port $\mathcal{F} \times \mathcal{E}$ in $\mathcal{D}_A$ and similarly holds for $\mathcal{D}_B$, we have that the composition of the two Dirac structures, i.e.,
\begin{equation}\label{EQN:CompDS}
\begin{split}
\mathcal{D}_A \circ \mathcal{D}_B :=  \{&\left (\delta X^A_t,\theta^A,\delta X_t^B,\theta^B\right ) \in \TT \XX_A \times \TT^* \XX_A \times \TT \XX_B \times \TT^* \XX_B \\
&\left (\delta X^A_t,\theta^A,- \delta f_B ,e_B\right ) \in \mathcal{D}_A \, \mbox{and} \, \left (\delta X^B_t,\theta^B,\delta f_B ,e_B\right ) \in \mathcal{D}_B \}\,,
\end{split}
\end{equation}
is again a Dirac structure, see Figure \ref{FIG:InterPHS2} for a graphical representation.

\begin{figure}
\centering
\resizebox {0.7\textwidth} {!} {
\begin{tikzpicture}[
  font=\sffamily,
  every matrix/.style={ampersand replacement=\&,column sep=2cm,row sep=2cm},
  source/.style={draw,thick,circle,fill=yellow!15,inner sep=.3cm ,minimum size=20mm},
  process/.style={draw,thick,circle,fill=blue!15,minimum  size=40mm},
  sink/.style={fill=green!0},
  datastore/.style={draw,very thick,shape=datastore,inner sep=.3cm},
  dots/.style={gray,scale=2},
  to/.style={->,>=stealth',shorten >=1pt,semithick,font=\sffamily\footnotesize},
  every node/.style={align=center}]

  \matrix{\node[sink] (sx) {}; \& \node[process] (uno) {\scalebox{1.5}{$\mathcal{D}_A$}}; \& \node[source] (d) {}; \&  \node[process] (due) {\scalebox{1.5}{$\mathcal{D}_B$}}; \& \node[sink] (dx) {} ; \\ };

  \draw[transform canvas={yshift=-7pt}] (uno) -- node[midway,below] {\scalebox{1.5}{$f_1$}} (sx);
  \draw[transform canvas={yshift=7pt}] (uno) -- node[midway,above] {\scalebox{1.5}{$e_1$}} (sx);
  \draw[transform canvas={yshift=-7pt}] (uno) -- node[midway,below] {\scalebox{1.5}{$f_A$}} (d);
  \draw[transform canvas={yshift=7pt}] (uno) -- node[midway,above] {\scalebox{1.5}{$e_A$}} (d);
  \draw[transform canvas={yshift=-7pt}] (due) -- node[midway,below] {\scalebox{1.5}{$f_B$}} (d);
  \draw[transform canvas={yshift=7pt}] (due) -- node[midway,above] {\scalebox{1.5}{$e_B$}} (d);
  \draw[transform canvas={yshift=-7pt}] (due) -- node[midway,below] {\scalebox{1.5}{$f_2$}} (dx);
  \draw[transform canvas={yshift=7pt}] (due) -- node[midway,above] {\scalebox{1.5}{$e_2$}} (dx);
\end{tikzpicture}
}
\caption{Interconnection of implicit port-Hamiltonian system.}\label{FIG:InterPHS2}
\end{figure}

We have the following result.

\begin{Proposition}\label{PRO:Comp}
Let $\mathcal{D}_A \subset \TT \XX_A \times \TT^* \XX_A \times \mathcal{F} \times \mathcal{E}$ and $\mathcal{D}_B \subset \TT \XX_B \times \TT^* \XX_B \times \mathcal{F} \times \mathcal{E}$ two Dirac structures, then $\mathcal{D}_A \circ \mathcal{D}_B \subset \TT \XX_A \times \TT^* \XX_A \times \TT \XX_B \times \TT^* \XX_B$ as defined in equation \eqref{EQN:CompDS} is a Dirac structure.
\end{Proposition}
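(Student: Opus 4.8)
The plan is to establish the two inclusions $\mathcal{D}_A \circ \mathcal{D}_B \subseteq (\mathcal{D}_A \circ \mathcal{D}_B)^\perp$ and $(\mathcal{D}_A \circ \mathcal{D}_B)^\perp \subseteq \mathcal{D}_A \circ \mathcal{D}_B$, the orthogonal complement being the one of \eqref{EQN:OrtCom} relative to the product manifold $\XX_A \times \XX_B$. A preliminary reduction I would make is that, since all the pairings entering \eqref{EQN:OrtCom} are continuous in $s$ and the orthogonality must hold for every $t \in I$, the integral conditions collapse to the corresponding pointwise identities along the integral curves, which by the subbundle property are in turn identities between the fibres; and each fibre $\mathcal{D}_A(x)$, $\mathcal{D}_B(x)$ is a constant Dirac structure by the remark following Definition \ref{DEF:GDS}. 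This allows one to import the linear algebra of the deterministic theory essentially verbatim, the semimartingale decoration playing no role at this level.

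For the isotropy inclusion I would take two elements $(\delta X^A_t,\theta^A,\delta X^B_t,\theta^B)$ and $(\delta \bar X^A_t,\bar\theta^A,\delta \bar X^B_t,\bar\theta^B)$ of $\mathcal{D}_A\circ\mathcal{D}_B$, with interconnection variables $(\delta f_B,e_B)$ and $(\delta\bar f_B,\bar e_B)$ as in \eqref{EQN:CompDS}, add and subtract the shared-port pairings $\int_0^t\langle e_B,\delta\bar f_B\rangle+\int_0^t\langle\bar e_B,\delta f_B\rangle$, and recognise the resulting expression as the pairing of the two lifted elements inside $\mathcal{D}_A$, plus the analogous pairing inside $\mathcal{D}_B$, minus the two port terms. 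The first two pieces vanish because $\mathcal{D}_A=\mathcal{D}_A^\perp$ and $\mathcal{D}_B=\mathcal{D}_B^\perp$, and the port terms cancel against each other by the sign convention $\delta f_A=-\delta f_B$, $e_A=e_B$; hence the whole sum is zero, which gives $\mathcal{D}_A\circ\mathcal{D}_B\subseteq(\mathcal{D}_A\circ\mathcal{D}_B)^\perp$.

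The reverse inclusion is where the real work sits. Given $(\delta X^A_t,\theta^A,\delta X^B_t,\theta^B)\in(\mathcal{D}_A\circ\mathcal{D}_B)^\perp$ one has to produce $(\delta f_B,e_B)$ with $(\delta X^A_t,\theta^A,-\delta f_B,e_B)\in\mathcal{D}_A$ and $(\delta X^B_t,\theta^B,\delta f_B,e_B)\in\mathcal{D}_B$. By the reduction above it suffices to do this fibrewise, where the statement is the classical fact that the composition of two constant Dirac structures through a shared port is again a constant Dirac structure, see \cite[Ch. 6]{VdSBook} and \cite{VdSPH}; this supplies $(\delta f_B(x),e_B(x))$ at every $x$, and one then checks that these selections assemble into a smooth, adapted section — smoothness from a constant-rank argument on the smooth subbundles $\mathcal{D}_A$ and $\mathcal{D}_B$, adaptedness because the selection is purely algebraic in the fibre data, so that $\delta f_B$ is again a Stratonovich vector field. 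Combining the two inclusions gives $\mathcal{D}_A\circ\mathcal{D}_B=(\mathcal{D}_A\circ\mathcal{D}_B)^\perp$, and the $N$-fold statement then follows by induction on the number of interconnected structures.

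I expect the main obstacle to be precisely the co-isotropic inclusion: the dimension count that trivialises it in the finite-dimensional deterministic setting is not available once the shared port $\mathcal{F}\times\mathcal{E}$ is allowed to be an infinite-dimensional space of Stratonovich vector fields, so one must either restrict to finite-dimensional interconnection ports or impose a closedness hypothesis on the relevant sum of fibres — the same non-degeneracy already invoked in Proposition \ref{PRO:KernRepSDS1} — and then verify that the fibrewise selection genuinely defines a smooth subbundle rather than merely a pointwise family.
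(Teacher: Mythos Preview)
Your isotropy argument $\mathcal{D}_A\circ\mathcal{D}_B\subseteq(\mathcal{D}_A\circ\mathcal{D}_B)^\perp$ is exactly the paper's: add and subtract the shared-port pairings, split into a $\mathcal{D}_A$-block and a $\mathcal{D}_B$-block, and use $\mathcal{D}_A=\mathcal{D}_A^\perp$, $\mathcal{D}_B=\mathcal{D}_B^\perp$.

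For the co-isotropic inclusion you diverge. You reduce to fibres, invoke the classical constant-Dirac composition theorem from \cite{VdSBook,VdSPH}, and then worry about assembling the fibrewise interconnection data into a smooth adapted section. The paper instead gives a direct, self-contained construction: starting from $(\delta X^A_t,\theta^A,\delta X^B_t,\theta^B)\in\mathcal{D}^\perp$, it tests against elements of $\mathcal{D}$ with $\delta\bar X^B=\bar\theta^B=0$, observes that the resulting linear functional $T^A(\delta\bar X^A_t,\bar\theta^A,-\delta\bar f,\bar e):=\langle\delta X^A_t,\bar\theta^A\rangle+\langle\delta\bar X^A_t,\theta^A\rangle$ depends only on the port variables $(-\delta\bar f,\bar e)$ (by subtracting two lifts with the same port data and using the orthogonality), hence factors as $\langle\delta f^A,\bar e\rangle+\langle\delta\bar f,e^A\rangle$ for some $(\delta f^A,e^A)$; the symmetric argument on the $B$ side produces $(\delta f^B,e^B)$, and substituting both representations back into the original orthogonality forces $\delta f^A=-\delta f^B$, $e^A=e^B$. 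Your route is conceptually cleaner and makes explicit the reduction to the deterministic case; the paper's route avoids the external citation and the bundle-regularity discussion, at the price of an implicit Riesz-type representation step. The infinite-dimensional caveat you flag in your last paragraph is fair, but note that it bites the paper's argument equally: the passage from ``$T^A$ depends only on $(-\delta\bar f,\bar e)$'' to ``$T^A$ is represented by some $(\delta f^A,e^A)$'' is exactly a finite-dimensional duality statement, so neither approach escapes that hypothesis.
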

\begin{proof}
In what follows we will denote for short $\mathcal{D}:= \mathcal{D}_A \circ \mathcal{D}_B$.

Let us first prove that $\mathcal{D} \subset \mathcal{D}^\perp$. 

Let $\left (\delta X^A_t,\theta^A,\delta X_t^B,\theta^B\right ) \in \mathcal{D}$, then for any $\left (\delta \bar{X}^A_t,\bar{\theta}^A,\delta \bar{X}_t^B,\bar{\theta}^B\right ) \in \mathcal{D}$ we have that
\begin{equation}\label{EQN:S1}
\begin{split}
\langle \langle \left (\delta X^A_t,\theta^A,\delta X_t^B,\theta^B\right ),\left (\delta \bar{X}^A_t,\bar{\theta}^A,\delta \bar{X}_t^B,\bar{\theta}^B\right )\rangle\rangle &= \langle \delta X^A_t,\bar{\theta}^A\rangle + \langle \delta \bar{X}^A_t,\theta^A\rangle +\\
&+\langle \delta X^B_t,\bar{\theta}^B\rangle + \langle \delta \bar{X}^B_t,\theta^B\rangle\,.
\end{split}
\end{equation}

Since we have that $\left (\delta X^A_t,\theta^A,\delta X_t^B,\theta^B\right )$, $\left (\delta \bar{X}^A_t,\bar{\theta}^A,\delta \bar{X}_t^B,\bar{\theta}^B\right ) \in \mathcal{D}$, there exist $(\delta f,e)$ and $(\delta \bar{f},e)$ such that
\begin{equation}\label{EQN:S1b}
\begin{split}
\left (\delta X^A_t,\theta^A,-\delta f,e\right ) \in &\mathcal{D}_A\,,\quad \left (\delta X_t^B,\theta^B,\delta f,e)\right )\in \mathcal{D}_B\,,\\
\left (\delta \bar{X}^A_t,\bar{\theta}^A,-\delta \bar{f},\bar{e}\right ) \in &\mathcal{D}_A\,,\quad \left (\delta \bar{X}_t^B,\bar{\theta}^B,\delta \bar{f},\bar{e})\right )\in \mathcal{D}_B\,.\\
\end{split}
\end{equation}

Therefore equation \eqref{EQN:S1} can be rewritten as
\begin{equation}\label{EQN:S1c}
\begin{split}
&\langle \delta X^A_t,\bar{\theta}^A\rangle + \langle \delta \bar{X}^A_t,\theta^A\rangle +\langle \delta X^B_t,\bar{\theta}^B\rangle + \langle \delta \bar{X}^B_t,\theta^B\rangle = \\
=&\langle \delta X^A_t,\bar{\theta}^A\rangle + \langle \delta \bar{X}^A_t,\theta^A\rangle - \langle \delta f,\bar{e}\rangle-\langle \delta \bar{f},e \rangle +\\
+&\langle \delta X^B_t,\bar{\theta}^B\rangle + \langle \delta \bar{X}^B_t,\theta^B\rangle+\langle \delta f,\bar{e}\rangle +\langle \delta \bar{f},e \rangle =0\,,
\end{split}
\end{equation}
where the last equality follows from equation \eqref{EQN:S1b} together with the fact that $\mathcal{D}_A$ and $\mathcal{D}_B$ are Dirac structures. Therefore, we have that $\left (\delta X^A_t,\theta^A,\delta X_t^B,\theta^B\right ) \in \mathcal{D}^\perp$ and $\mathcal{D} \subset \mathcal{D}^\perp$ is proved.

Let us now prove conversely that $\mathcal{D}^\perp \subset \mathcal{D}$.

Let $\left (\delta X^A_t,\theta^A,\delta X_t^B,\theta^B\right ) \in \mathcal{D}^\perp$, then
\begin{equation}\label{EQN:S2a}
\begin{split}
0&=\langle \delta X^A_t,\bar{\theta}^A\rangle + \langle \delta \bar{X}^A_t,\theta^A\rangle +\langle \delta X^B_t,\bar{\theta}^B\rangle + \langle \delta \bar{X}^B_t,\theta^B\rangle\,,
\end{split}
\end{equation}
for all $\left (\delta \bar{X}^A_t,\bar{\theta}^A,\delta \bar{X}_t^B,\bar{\theta}^B\right ) \in \mathcal{D}$, that is there exist $\delta \bar{f}$ and $e$ such that
\[
\begin{split}
\left (\delta \bar{X}^A_t,\bar{\theta}^A,-\delta \bar{f},\bar{e}\right ) \in \mathcal{D}_A\,,\quad \left (\delta \bar{X}^B_t,\bar{\theta}^B,\delta \bar{f},\bar{e}\right ) \in \mathcal{D}_B\,.
\end{split}
\]

By choosing $\delta \bar{X}^B = \bar{\theta}^B = 0$ equation \eqref{EQN:S2a} becomes
\[
\langle \delta X^A_t,\bar{\theta}^A\rangle + \langle \delta \bar{X}^A_t,\theta^A\rangle = 0\,.
\]
If $\left (\delta \bar{X}^A_t,\bar{\theta}^A,-\delta \bar{f},\bar{e}\right ) \in \mathcal{D}_A$ and $\left ((\delta \bar{X}^A_t)',(\bar{\theta}^A)',-\delta \bar{f},\bar{e}\right ) \in \mathcal{D}_A$, then we have
\[
\left (\delta \bar{X}^A_t-(\delta \bar{X}^A_t)',\bar{\theta}^A-(\bar{\theta}^A)',0,0\right ) \in \mathcal{D}_A\,.
\]
Defining the linear operator $T^A$ as 
\[
T^A \left (\delta \bar{X}^A_t,\bar{\theta}^A,-\delta \bar{f},\bar{e}\right ) := \langle \delta X^A_t,\bar{\theta}^A\rangle + \langle \delta \bar{X}^A_t,\theta^A\rangle\,,
\]
we have that
\[
T^A \left (\delta \bar{X}^A_t-(\delta \bar{X}^A_t)',\bar{\theta}^A-(\bar{\theta}^A)',0,0\right )=0\,,
\]
so that the linearity of $T$ in turn implies
\[
T^A \left (\delta \bar{X}^A_t,\bar{\theta}^A,-\delta \bar{f},\bar{e}\right )= T^A \left ((\delta \bar{X}^A_t)',(\bar{\theta}^A)',-\delta \bar{f},\bar{e}\right )\,.
\]
\fc{Consequently, by the linearity of $T^A$, we infer} that there exists $\delta f^A$ and $e^A$ such that
\[
T^A \left (\delta \bar{X}^A_t,\bar{\theta}^A,-\delta \bar{f},\bar{e}\right ) = \langle \delta f^A,\bar{e}\rangle + \langle \delta \bar{f},e^A\rangle \,,
\]
or equivalently using the definition of $T^A$ we have that there exist $\delta f^A$ and $e^A$ such that
\begin{equation}\label{EQN:S2b}
\langle \delta X^A_t,\bar{\theta}^A\rangle + \langle \delta \bar{X}^A_t,\theta^A\rangle + \langle \delta f^A,\bar{e}\rangle - \langle \delta \bar{f},e^A\rangle = 0\,.
\end{equation}

Repeating the same reasoning, choosing $\delta \bar{X}^A = \bar{\theta}^A = 0$ we obtain
\begin{equation}\label{EQN:S2c}
\langle \delta X^B_t,\bar{\theta}^B\rangle + \langle \delta \bar{X}^B_t,\theta^B\rangle + \langle \delta f^B,\bar{e}\rangle + \langle \delta \bar{f},e^B\rangle = 0\,.
\end{equation}

Substituting now equations \eqref{EQN:S2b}--\eqref{EQN:S2c} into equation \eqref{EQN:S2a} we get
\begin{equation}\label{EQN:S2d}
0=\langle \delta \bar{f},e^A\rangle - \langle \delta f^A,\bar{e}\rangle -\langle \delta \bar{f},e^B\rangle - \langle \delta f^B,\bar{e}\rangle = \langle \delta \bar{f},e^A-e^B \rangle - \langle \delta f^A + \delta f^B,\bar{e}\rangle\,,
\end{equation}
so that we can conclude that $\delta f^A = -\delta f^B$ and $e^A = e^B$, and therefore we have shown that $\mathcal{D}^\perp \subset \mathcal{D}$ and the proof is complete.
\end{proof}

This proposition can be generalized to consider $N$ implicit SPHS with state space $\XX_i$, Hamiltonian $H_i$ and flows effort space $\mathcal{F}_i \times \mathcal{E}_i$, by defining the interconnection Dirac structure as
\[
\mathcal{D}_I \subset \bigtimes_{i=1}^N \left ( \mathcal{F}_i \times \mathcal{E}_i \times \mathcal{F}\times \mathcal{E}\right )\,.
\]

We thus have that $\mathcal{D}:= \bigtimes_{i=1}^N \mathcal{D}_i$ is a Dirac structure on $\XX := \bigtimes_{i=1}^N \XX_i$, so that by Proposition \ref{PRO:Comp} we have that $\mathcal{D} \circ \mathcal{D}_I$ is a Dirac structure on $\XX$; Figure \ref{FIG:InterPHS} shows a representation of interconnected port-Hamiltonian systems.

\begin{Proposition}
The interconnection of $N$ SPHS with state space $\XX_i$, Hamiltonian $H_i$ and flows effort space $\mathcal{F}_i \times \mathcal{E}_i$ connected through an interconnection Dirac structure $\mathcal{D}_I$ and perturbing semimartingale $\mathbf{Z}^i$, defines a SPHS with Dirac structure $\mathcal{D} \circ \mathcal{D}_I$ and Hamiltonian $H:= \sum_{i=1}^N H_i$.
\end{Proposition}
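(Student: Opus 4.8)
The plan is to reduce the statement to Proposition \ref{PRO:Comp} by induction, the only genuinely new ingredient being that the total energy of a power--preserving interconnection is the sum of the energies of the components. First I would form the product bundle $\mathcal{D}:=\bigtimes_{i=1}^N\mathcal{D}_i$ over the product manifold $\XX:=\bigtimes_{i=1}^N\XX_i$ and check that it is a generalized stochastic Dirac structure in the sense of Definition \ref{DEF:SGDS}: since the bilinear pairing on the product flow--effort space splits as the direct sum of the pairings on the factors, the orthogonal complement distributes over products, $\mathcal{D}^\perp=\bigtimes_{i=1}^N\mathcal{D}_i^\perp$, and $\mathcal{D}_i=\mathcal{D}_i^\perp$ for each $i$ yields $\mathcal{D}=\mathcal{D}^\perp$. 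The Stratonovich vector field attached to $\mathcal{D}$ is the tuple $\delta X_t=(\delta X_t^1,\dots,\delta X_t^N)$, generated by the concatenated semimartingale $(\mathbf{Z}^1,\dots,\mathbf{Z}^N)$.

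Next I would apply Proposition \ref{PRO:Comp} to $\mathcal{D}$ and the interconnection structure $\mathcal{D}_I$, which share the aggregate port $\bigtimes_{i=1}^N(\mathcal{F}_i\times\mathcal{E}_i)$: the composition $\mathcal{D}\circ\mathcal{D}_I$ formed as in \eqref{EQN:CompDS} is then again a generalized stochastic Dirac structure on $\XX$, retaining the storage ports of the subsystems and the residual external port $\mathcal{F}\times\mathcal{E}$. Equivalently one may compose pairwise and iterate Proposition \ref{PRO:Comp} $N$ times; this is legitimate because at each stage the object produced is again a Dirac structure, so the hypotheses of the proposition are preserved. This is the step that makes precise the assertion that the composition of $N$ Dirac structures is a Dirac structure.

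The third step identifies the composite Hamiltonian. Setting $H(x):=\sum_{i=1}^N H_i(x^i)$ on $\XX$ and using that each $H_i$ depends only on the $i$--th block, one has $\langle\dd H(x),\delta X_t\rangle=\sum_{i=1}^N\langle\dd H_i(x^i),\delta X_t^i\rangle$, so the storage interconnection $\delta f^S_t=-\delta X_t$, $e^S_t=\dd H$ of the composite decomposes precisely into the storage interconnections of the subsystems. By the definition of $\mathcal{D}\circ\mathcal{D}_I$, the collection of subsystem relations $(-\delta X_t^i,\dd H_i(X_t^i),\delta f_t^i,e_t^i)\in\mathcal{D}_i(X_t^i)$, $i=1,\dots,N$, together with $\big((\delta f_t^i,e_t^i)_{i=1}^N,\delta f_t,e_t\big)\in\mathcal{D}_I$, is equivalent to $\big(-\delta X_t,\dd H(X_t),\delta f_t,e_t\big)\in(\mathcal{D}\circ\mathcal{D}_I)(X_t)$, which is exactly the condition of Definition \ref{DEF:ImpSPHSsh} for the system $(\XX,\mathbf{Z},\mathcal{F},\mathcal{D}\circ\mathcal{D}_I,H)$, with $\mathbf{Z}$ the concatenation of the $\mathbf{Z}^i$ and of the driving semimartingales of $\mathcal{D}_I$ and of the external port. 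I would close by recording the energy balance: the chain rule \eqref{EQN:EnConsFin} together with the Dirac property \eqref{EQN:EnCons} applied to the composite gives $H(X_t)-H(X_0)=\sum_{i=1}^N\int_0^t\langle\dd H_i,\delta X_s^i\rangle=\int_0^t\langle\dd H,\delta X_s\rangle=\int_0^t\langle e_s,\delta f_s\rangle$, so the interconnected system inherits the power--conservation property through its external port, and if the resistive and noise ports of each $\mathcal{D}_i$ are terminated by (weak) dissipation relations it inherits (weak) passivity as well.

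I expect the main obstacle to be the port bookkeeping underlying Step 1 and the composition in Step 2: each $\mathcal{D}_i$ carries both a storage port and an interaction port $\mathcal{F}_i\times\mathcal{E}_i$ (and possibly resistive/noise ports), and one must verify carefully that the bilinear pairing genuinely splits as a direct sum over the product factors so that $\mathcal{D}^\perp=\bigtimes_i\mathcal{D}_i^\perp$, and that after composing with $\mathcal{D}_I$ the ports left open are exactly the $N$ storage ports, which reassemble into the single storage port $(-\delta X_t,\dd H)$ of the composite, plus the external port $\mathcal{F}\times\mathcal{E}$. Once this accounting is settled, Steps 2 and 3 are essentially formal and mirror the deterministic argument of \cite{VdSPH}, with Proposition \ref{PRO:Comp} carrying the analytic weight.
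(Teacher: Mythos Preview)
Your proposal is correct and follows essentially the same route as the paper: the paper's argument, given in the paragraph immediately preceding the proposition rather than as a formal proof, is precisely to form the product $\mathcal{D}:=\bigtimes_{i=1}^N\mathcal{D}_i$ on $\XX:=\bigtimes_{i=1}^N\XX_i$, note that it is a Dirac structure, and then invoke Proposition~\ref{PRO:Comp} to conclude that $\mathcal{D}\circ\mathcal{D}_I$ is a Dirac structure. Your write-up adds the justification that the orthogonal complement distributes over the product and spells out why the storage port of the composite is governed by $H=\sum_i H_i$, which the paper simply asserts; these elaborations are welcome but do not depart from the paper's strategy.
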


\begin{figure}
\centering
\resizebox {0.7\textwidth} {!} {
\begin{tikzpicture}[
  font=\sffamily,
  every matrix/.style={ampersand replacement=\&,column sep=2cm,row sep=2cm},
  source/.style={draw,thick,circle,fill=yellow!15,inner sep=.3cm ,minimum size=40mm},
  process/.style={draw,thick,circle,fill=blue!15,minimum  size=40mm},
  sink/.style={fill=green!0},
  datastore/.style={draw,very thick,shape=datastore,inner sep=.3cm},
  dots/.style={gray,scale=2},
  to/.style={->,>=stealth',shorten >=1pt,semithick,font=\sffamily\footnotesize},
  every node/.style={align=center}]

  \matrix{ \& \node[process] (uno) {$(\XX_1,\mathcal{F}_1,\mathcal{D}_1,H_1,Z_1)$}; \& \& \\

 \node[process] (due) {$(\XX_2,\mathcal{F}_2,\mathcal{D}_2,H_2,Z_2)$}; \& \& \node[source] (d) {\scalebox{1.5}{$\mathcal{D}_I$}}; \&\node[sink] (ext) {}; \\

 \& \node[process] (k) {$(\XX_N,\mathcal{F}_N,\mathcal{D}_N,H_N,Z_N)$}; \& \&\\ };

  \draw[transform canvas={xshift=-7pt,yshift=-7pt}] (uno) -- node[midway,left] {\scalebox{1.5}{$f_1$}} (d);
  \draw[transform canvas={xshift=7pt,yshift=7pt}] (uno) -- node[midway,right] {\scalebox{1.5}{$e_1$}} (d);
  \draw[transform canvas={yshift=-7pt}] (due) -- node[midway,below] {\scalebox{1.5}{$f_2$}} (d);
  \draw[transform canvas={yshift=7pt}] (due) -- node[midway,above] {\scalebox{1.5}{$e_2$}} (d);
  \draw[transform canvas={xshift=7pt,yshift=-7pt}] (k) -- node[midway,right] {\scalebox{1.5}{$f_k$}} (d);
  \draw[transform canvas={xshift=-7pt,yshift=7pt}] (k) -- node[midway,left] {\scalebox{1.5}{$e_k$}} (d);
  \draw[transform canvas={yshift=-7pt}] (d) -- node[midway,below] {\scalebox{1.5}{$f$}} (ext);
  \draw[transform canvas={yshift=7pt}] (d) -- node[midway,above] {\scalebox{1.5}{$e$}} (ext);
  \draw[dashed] (due) to[bend right] (k);
  
\end{tikzpicture}
}
\caption{Interconnection of implicit port-Hamiltonian system.}\label{FIG:InterPHS}
\end{figure}

\begin{Example}\label{EX:InterSPHS}
The interconnection of two port Hamiltonian systems of the form given in equation \eqref{EQN:CondDS}, 
\[
\begin{split}
&\begin{cases}
\delta X_t &= \left (J(X_t) - R(X_t)\right )\partial_x H(X_t) \delta Z_t - \sum_{i=1}^m \xi_i(X_t) \delta Z^{N;i}_t - g(X_t) u_t \delta Z^C_t\,,\\
y_t &= g^T(X_t) \partial_x H(X_t)\,,\\
\end{cases}\,,\\
&\begin{cases}
\delta \bar{X}_t &= \left (\bar{J}(\bar{X}_t) - \bar{R}(\bar{X}_t)\right )\partial_{\bar{x}} \bar{H}(\bar{X}_t) \delta \bar{Z}_t - \sum_{i=1}^{\bar{m}} \bar{\xi}_i(\bar{X}_t) \delta \bar{Z}^{N;i}_t - \bar{g}(\bar{X}_t) \bar{u} \delta \bar{Z}^C_t\,,\\
\bar{y}_t &= \bar{g}^T(X_t) \partial_{\bar{x}} \bar{H}(\bar{X}_t)\,,\\
\end{cases}
\end{split}
\]
through the power preserving connection
\[
u = - \bar{u}\,,\quad y=\bar{y}\,,
\]
leads to a stochastic PHS of the form
\[
\begin{cases}
\delta X_t &= \left (J(X_t) - R(X_t)\right )\partial_x H(X_t) \delta Z_t - \sum_{i=1}^m \xi_i(X_t) \delta Z^{N;i}_t - g(X_t) \lambda \delta Z^C_t\,,\\
\delta \bar{X}_t &= \left (\bar{J}(\bar{X}_t) - \bar{R}(\bar{X}_t)\right )\partial_{\bar{x}} \bar{H}(\bar{X}_t) \delta \bar{Z}_t - \sum_{i=1}^{\bar{m}} \bar{\xi}_i(\bar{X}_t) \delta \bar{Z}^{N;i}_t + \bar{g}(\bar{X}_t) \lambda \delta \bar{Z}^C_t\,,\\
g^T(X_t)\partial_x H(X_t) &= \bar{g}^T(X_t) \partial_{\bar{x}} \bar{H}(\bar{X}_t)\,.
\end{cases}
\]
\demo \end{Example}

Example \ref{EX:InterSPHS} can be generalized to consider $N$ explicit semimartingale port-Hamiltonian systems in local coordinates of the form \eqref{EQN:CondDS}, $i = 1, \dots,N$, on a general $n_i-$dimensional manifold $\XX_i$. 

In general, we could consider a power-preserving interconnection of the SPHS, that is a subspace
\[
I(X^1_t,\dots,X^N_t) \subset \mathcal{F}^1 \times \dots \times \mathcal{F}^N \times \mathcal{E}^1\times \dots \mathcal{E}^N\,,
\]
such that power is preserved, namely
\begin{equation}\label{EQN:PowC}
(\delta f^1_t,\dots,\delta f^N_t,e_t^1,\dots,e_t^N)\in I \Rightarrow \sum_{i=1}^N \int_0^t  \langle e^i_s,\delta f^i_s\rangle =0\,.
\end{equation}

Notice that the interconnection $I$, as given above, defines a Dirac structure.

\color{black}
\subsection{Examples}
\subsubsection{The mass-spring system}\label{EX:massspring}
Consider the \textit{mass--spring system}
\begin{equation}\label{EQN:mss}
m \ddot{x} = -k x + F\,,
\end{equation}
where $x$ is the position of the system, $m$ its mass, $F$ the applied force and $k$ the stiffness of the spring. Defining $p = m \dot{x}$ as the momentum and $q=x$, it is easily seen that $X= (p,q)$ defines a PHS with respect to the energy
\[
H(p,q)= \frac{1}{2}k q^2 + \frac{1}{2}\frac{p^2}{m}\,,
\]
of the form
\[
\begin{cases}
\dot{X} &= J \partial_x H(X) + g F\,,\\
y&=g^T\partial H(X)\,,
\end{cases}
\]
with
\[
J = 
\begin{pmatrix}
0 & 1 \\ -1 & 0
\end{pmatrix}\,,\quad
g = \begin{pmatrix}
0 \\ 1
\end{pmatrix} \,,\quad
\partial_x H(X) = \begin{pmatrix}
k q \\\frac{p}{m}
\end{pmatrix}\,.
\]

Let $Z_t(\omega) = t + W_t$, being $W_t$ a standard Brownian motion, we can generalize equation \eqref{EQN:mss} to consider a stochastic term
\begin{equation}\label{EQN:msss}
\binom{\delta q_t}{\delta p_t} = \binom{\frac{p_t}{m}}{- k q_t + F}\delta t + \binom{\frac{p_t}{m}}{- k q_t} \delta W_t\,;
\end{equation}
or in It\^{o} form 
\begin{equation}\label{EQN:msss}
\binom{d q_t}{d p_t} = \binom{\frac{p_t}{m}-\frac{kq_t}{2m}}{- k q_t - \frac{k}{2m}p_t + F}d t + \binom{\frac{p_t}{m}}{- k q_t} d W_t\,.
\end{equation}
Denoting by 
\[
\bar{q}_t := \mathbb{E}q_t\,,\quad \bar{p}_t := \mathbb{E}p_t\,,
\]
we obtain, using the fact that the integral w.r.t. $W_t$ is a martingale,
\[
\begin{split}
\dot{\bar{q}}_t &= \frac{\bar{p}_t}{m}-\frac{k \bar{q}_t}{2m}\,,\\
\dot{\bar{p}}_t &= - k \bar{q}_t - \frac{k}{2m} \bar{p}_t + F\,.
\end{split}
\]
Since $p = m \dot{q}$, we have
\[
m \ddot{\bar{q}}_t = m \dot{\bar{p}}_t = - \frac{k}{2} \dot{\bar{q}}_t - k \bar{q}_t + F \,,
\]
which is the equation for a damped harmonic oscillator.


\subsubsection{The $n$-DOF robotic arm}\label{EX:nDOFNoise}
Consider a \textit{n-DOF} fully actuated mechanical system with generalized coordinate $q$, see \cite{Sec} for the deterministic treatment; let $p = M(q)\dot{q}$ be the generalized momenta, and $H(p,q)$ be the Hamiltonian
\[
H(p,q) = \frac{1}{2}p^TM^{-1}(q)p+V(q)\,,
\]
with the structure matrices
\[
J= 
\begin{pmatrix}
0 & I_n\\
-I_n & 0
\end{pmatrix}\,,\quad 
R=
\begin{pmatrix}
0 & 0 \\
0 & D(p,q)
\end{pmatrix}\,,\quad 
g = 
\begin{pmatrix}
0 \\
B(q)
\end{pmatrix}\,,\quad
S = J-R\,.
\]

The stochastic $n$-DOF robot with model noise is
\begin{equation}\label{EQN:StochnDOF}
\begin{cases}
\delta X_t &= S(X_t) \partial_x H(X_t) \delta t +  S(X_t) \partial_x H(X_t) \delta W_t + g(X_t) u_t \delta t + \xi(X_t) \delta B_t\,,\\
y_t &= G^T(X_t) \partial_x H(X_t)\,,
\end{cases}
\end{equation}
with $X_t = (p_t,q_t)$ and where we considered the semimartingale $Z_t(\omega):= t + \sigma W_t(\omega)$, being $\sigma>0$ and $W_t$ a standard Brownian motion, $Z^{N}_t(\omega):= B_t$, with $B_t$ a standard Brownian motion independent of $W_t$, and $Z^C_t(\omega) = t$.

Equivalently in It\^{o} form we get
\begin{equation}\label{EQN:StochnDOFIto}
\begin{cases}
d X_t &= \left (S(X_t) \partial_x H(X_t) + \sigma^2 \partial_x\left (S(X_t) \partial_x H(X_t) \right ) S(X_t) \partial_x H(X_t) \right ) d t +\\
&+ \partial_x (\xi(X_t)) \xi(X_t)dt + g(X_t) u_t dt + S(X_t) \partial_x H(X_t)  d W_t + \xi(X_t) d B_t\,,\\
y_t &= g^T(X_t) \partial_x H(X_t)\,,
\end{cases}
\end{equation}

\subsubsection{The DC motor}\label{EX:nDOFNoise}
Consider a DC motor, that is $\XX = \RR^2$ and $X=(\phi,p)$ with Hamiltonian
\[
H(p,\phi)=\frac{1}{2}\frac{p^2}{I}+\frac{1}{2}\frac{\phi^2}{L};
\]
and structure matrices
\[
J= 
\begin{pmatrix}
0 & K\\
-K & 0
\end{pmatrix}\,,\quad 
R=
\begin{pmatrix}
b & 0 \\
0 & R
\end{pmatrix}\,,\quad 
g = 
\begin{pmatrix}
0 \\
1
\end{pmatrix}\,,\quad
S = J-R\,,
\]
see \cite{Sec} for the deterministic treatment.

The stochastic DC motor with noise is
\begin{equation}\label{EQN:StochnDOF}
\begin{cases}
\delta X_t &= S(X_t) \partial_x H(X_t) \delta t +  S(X_t) \partial_x H(X_t) \delta W_t + g(X_t) u_t \delta t + \xi(X_t) \delta B_t\,,\\
y_t &= g^T(X_t) \partial_x H(X_t)\,,
\end{cases}
\end{equation}
with $X_t=(p_t,\phi_t)$ and where the semimartingale $Z_t(\omega):= t + \sigma W_t(\omega)$, being $\sigma>0$ and $W_t$ a standard Brownian motion, $Z^N_t(\omega):= B_t$, with $B_t$ a standard Brownian motion independent of $W_t$, and $Z^C_t(\omega) = t$.

Equation \eqref{EQN:StochnDOF} can be rewritten in It\^{o} form as
\begin{equation}\label{EQN:StochnDOFIto}
\begin{cases}
d X_t &= \left (S(X_t) \partial_x H(X_t) + \frac{1}{2} S(X_t) \partial_x H(X_t) \partial_x\left (S(X_t) \partial_x H(X_t) \right )\right ) d t +\\
&+ S(X_t) \partial_x H(X_t)  d W_t +  g(X_t) u_t dt\,,\\
y_t &= g^T(X_t) \partial_x H(X_t)\,,
\end{cases}
\end{equation}

\subsubsection{The Van der Pol osclillator}\label{EX:nDOFNoise}

Consider a stochastic \textit{van der Pol oscillator} of the form
\[
\begin{cases}
\delta x_1=x_2\delta t\,,\\
\delta x_2(t) = \left (\mu (1-x^2_1)x_2(t) - x_1(t)\right )\delta t + \xi(x_2) \delta W_t\,.
\end{cases}
\]
or written for short as
\begin{equation}\label{EQN:vdp}
\delta X_t = \left (J(X_t)-R(X_t)\right )\partial_x H(X_t) \delta t + \xi(X_t)\delta dW_t\,,
\end{equation}
where the energy Hamiltonian function is
\[
H(X_t) = \frac{1}{2}X^T_t I X_t\,,
\]
with $I$ the $2 \times 2-$ identity matrix, and dissipation structure
\[
R(X_t) = 
\begin{pmatrix}
0 & 0\\
0 & -\mu (1-x^2_1(t))x_2(t)
\end{pmatrix}\,.
\]

This stochastic dynamics has been treated for instance in \cite{CDPvdp} or also in the more general form of a stochastic Fitz--Hugh Nagumo (FHN) model in \cite{CDPFHN,CDPFHN2}.

\color{black}

\section{Conclusions}\label{SEC:Conc}
This work is the first step of a more general research program intended to rigorously study stochastic port--Hamiltonian systems. In the present paper we formally introduced the definition of \textit{stochastic implicit port-Hamiltonian system}, showing how the considered setting generalizes existing notions of \textit{stochastic Hamiltonian dynamics} as well as \textit{deterministic port--Hamiltonian systems}. OOne of the main novelty of our approach consists in allowing the elements of the port--Hamiltonian to be stochastic vector fields, so that the power exchanged by the system is allowed to be a general semimartingale. In this sense, the noise does not enter the system solely as an external perturbation, the system itself being intrinsically stochastic. We further showed how the \textit{stochastic implicit port-Hamiltonian system} can be equivalently formulated in terms of either Stratonovich or It\^{o} integration. At last, an investigation on energy conservation, passivity and power--preserving interconnection of SPHS has been carried out.

\color{black}

\cleardoublepage
\bibliographystyle{apalike}
\bibliography{bib}
\end{document}